\theoremstyle{plain}
\newtheorem{theorem}{Theorem}[section]
\newtheorem{lemma}[theorem]{Lemma}
\newtheorem{corollary}[theorem]{Corollary}
\newtheorem{proposition}[theorem]{Proposition}
\newtheorem{remark}[theorem]{Remark}
\theoremstyle{definition}
\newtheorem{definition}[theorem]{Definition}
\DeclareMathOperator{\Int}{Int}
\DeclareMathOperator{\IntR}{Int{}^\text{R}}
\DeclareMathOperator{\minval}{minval}
\DeclareMathOperator{\loc}{loc}
\DeclareMathOperator{\Spec}{Spec}
\renewcommand{\epsilon}{\varepsilon}
\newcommand{\N}{{\mathbb N}}
\newcommand{\Q}{{\mathbb Q}}
\newcommand{\Z}{{\mathbb Z}}
\renewcommand{\P}{\mathfrak{P}}
\newcommand{\p}{\mathfrak{p}}
\newcommand{\q}{\mathfrak{q}}
\newcommand{\m}{\mathfrak{m}}
\newcommand{\M}{\mathfrak{M}}
\definecolor{gray}{rgb}{.5,.5,.5}
\definecolor{black}{rgb}{0,0,0}
\definecolor{blue}{rgb}{0,0,1}
\definecolor{red}{rgb}{1,0,0}
\definecolor{green}{rgb}{0,1,0}
\definecolor{gold}{rgb}{.5,.5,.2}
\definecolor{yellow}{rgb}{1,1,.4}
\definecolor{purple}{rgb}{.5,0,.5}
\definecolor{darkgreen}{rgb}{0,.5,0}
\definecolor{orange}{rgb}{1,.55,0}
\definecolor{white}{rgb}{1,1,1}
\let\originalleft\left
\let\originalright\right
\renewcommand{\left}{\mathopen{}\mathclose\bgroup\originalleft}
\renewcommand{\right}{\aftergroup\egroup\originalright}
\let\originaltodo\todo
\renewcommand{\todo}[1]{\originaltodo[inline]{#1}}
\title{Integer-valued rational functions over globalized pseudovaluation domains}
\author{Baian Liu}
\begin{document}
	
	\maketitle

\begin{abstract}
	Let $D$ be a domain. Park determined the necessary and sufficient conditions for which the ring of integer-valued polynomials $\Int(D)$ is a globalized pseudovaluation domain (GPVD). In this work, we investigate the ring of integer-valued rational functions $\IntR(D)$. Since it is necessary that $D$ be a GPVD for $\IntR(D)$ to be a GPVD, we consider $\IntR(D)$, where $D$ is a GPVD. We determine that if $D$ is a pseudosingular GPVD, then $\IntR(D)$ is a GPVD. We also completely characterize when $\IntR(D)$ is a GPVD if $D$ is a pseudovaluation domain that is not a valuation domain. 
\end{abstract}

\section{Introduction}
\indent\indent The concept of integer-valued polynomials has been studied throughout many different areas of mathematics. One way to study integer-valued polynomials is to consider a collection of integer-valued polynomials as a ring. Given a domain $D$ with field of fractions $K$ and $E$ some subset of $K$, we can define \[
\Int(D) \coloneqq \{ f \in K[x]\mid f(d) \in D,\, \forall d \in D \} \quad \text{and} \quad	\Int(E,D) \coloneqq \{ f \in K[x] \mid f(a) \in D,\,\forall a \in E \}
\]
the \textbf{ring of integer-valued polynomials over $D$} and \textbf{ring of integer-valued polynomials on $E$ over $D$}, respectively. Note that $\Int(D, D) = \Int(D)$.

A type of question one can ask about the rings of the form $\Int(D)$ starts by fixing a property on the rings of integer-valued polynomials. Then we investigate what kind of conditions the base ring $D$ must have. The two papers \cite{IntPruferClassification} and \cite{CahenChabertFrisch} give two different classifications of Prüfer domains of the form $\Int(D)$. Furthermore, \cite{PvMD} gives a complete characterization of when $\Int(D)$ is a Prüfer $v$-multiplication domain (P$v$MD). The ring-theoretic property we look at in this work is the property of being a globalized pseudovaluation domain (GPVD). In \cite{ParkGPVD}, Park gives complete necessary and sufficient conditions for $\Int(D)$ to be a GPVD. To introduce this result, we provide the definitions surrounding GPVDs.

We can consider a GPVD to be a generalization of a Prüfer domain. We first focus on the local counterparts. Localizing a Prüfer domain at a prime ideal yields a valuation domain. A way to generalize a valuation domain is to use a pseudovaluation domain. For references on pseudovaluation domains, see \cite{HedstromHouston, HedstromHoustonII}. 
\begin{definition}
	A domain $D$ is a \textbf{pseudovaluation domain} (PVD) if $D$ has a valuation overring $V$ such that $\Spec(D) = \Spec(V)$ as sets. The valuation domain is uniquely determined and is called the \textbf{associated valuation domain} of $D$.
\end{definition}

\begin{remark}
	In particular, a pseudovaluation domain and the associated valuation domain have the same maximal ideal. 
	
	One way to construct a pseudovaluation domain is to start with a valuation domain $V$. Let $\m$ be the maximal ideal of $V$. Consider the canonical projection $\pi:V \to V/\m$. Take a subfield $F \subseteq V/\m$. Then $D \coloneqq \pi^{-1}(F)$ is a pseudovaluation domain with associated valuation domain $V$. 
\end{remark}

We can consider Prüfer domains to be global counterparts of valuation domains. Dobbs and Fontana studied two global counterparts of pseudovaluation domains called locally pseudovaluation domains and globalized pseudovaluation domains \cite{DobbsFontana}. First, we introduce the definition of a locally pseudovaluation domain.

\begin{definition}
	A domain $D$ is a \textbf{locally pseudovaluation domain} (LPVD) if for every maximal ideal $\m$ of $D$, the localization $D_\m$ is a PVD.
\end{definition}

Since there is a valuation domain associated with a pseudovaluation domain, we would like there to be a Prüfer domain associated with a locally pseudovaluation domain. However, a locally pseudovaluation domain that is not a PVD or a Prüfer domain does not have a Prüfer overring with the same prime spectrum \cite[Proposition 3.3]{AndersonDobbs}. Nevertheless, there is a subclass of LPVDs that does have an associated Prüfer overring which is a unibranched extension. 

\begin{definition}
	An extension of commutative rings $A \subseteq B$ is \textbf{unibranched} if the contraction map $\Spec(B) \to \Spec(A)$ is a bijection.
	
	A domain $D$ is a \textbf{globalized pseudovaluation domain} (GPVD) if there exists a Prüfer domain $T$ containing $D$ such that 
	\begin{itemize}
		\item 	$D \subseteq T$ is a unibranched extension and
		\item 	 there exists a nonzero radical ideal $J$ common to $D$ and $T$ such that each prime ideal of $T$ containing $J$ is maximal in $T$ and each prime ideal of $D$ containing $J$ is maximal in $D$.
	\end{itemize}
	The Prüfer domain is uniquely determined and is called the \textbf{associated Prüfer domain} of $D$. 
\end{definition}

\begin{remark}
	Theorem 3.1 in \cite{DobbsFontana} provides an equivalent definition of a GPVD. A domain $D$ is a GPVD if there exists a Prüfer domain $T$ containing $D$ such that there is a common radical ideal $J$ where $D/J \subseteq T/J$ is a unibranched extension of Krull dimension zero rings. 
\end{remark}

\begin{remark}
	A GPVD is an LPVD, and a PVD is a GPVD with its associated valuation domain as its associated Prüfer domain and their common maximal ideal as the common radical ideal in the definition of a GPVD \cite{DobbsFontana}. 
\end{remark}

We return to Park's characterization of GPVDs of the form $\Int(D)$. This characterization uses the idea of an interpolation domain, which is a domain $D$ such that for every distinct pair $a, b \in D$ there exists $f \in \Int(D)$ such that $f(a) = 0$ and $f(b) = 1$. Park showed that for a domain $D$ that is not a field, $\Int(D)$ is a GPVD if and only if $D$ is GPVD and an interpolation domain \cite{ParkGPVD}. Furthermore, when $\Int(D)$ is a GPVD, the associated Prüfer domain is $\Int(D, T)$, where $T$ is the associated Prüfer domain of $D$. 

Now we examine a generalization of the concept of integer-valued polynomials. We will study integer-valued rational functions. For a domain $D$ with field of fractions $K$ and $E$ some subset of $K$, we define 	\[
\IntR(D) \coloneqq \{ \varphi \in K(x) \mid \varphi(d) \in D,\, \forall d \in D \} \quad \text{and} \quad	\IntR(E,D) \coloneqq \{ \varphi \in K(x) \mid \varphi(a) \in D,\,\forall a \in E \}
\]
the \textbf{ring of integer-valued rational functions over $D$} and the \textbf{ring of integer-valued rational functions on $E$ over $D$}, respectively. Note that $\IntR(D,D) = \IntR(D)$. We can also define an ideal of $\IntR(E,D)$ using an ideal $I$ of $D$. One can check that the set 
\[
	\IntR(E,I) \coloneqq \{\varphi \in \IntR(E,D) \mid \varphi(a) \in I,\,\forall a \in E \}
\]
is an ideal of $\IntR(E,D)$. 

We want to explore under what conditions the ring of integer-valued rational functions is a GPVD. We first provide a necessary condition. As with Prüfer domains, the homomorphic image of a GPVD is a GPVD \cite[Lemma 1]{ParkGPVD}. In order to have a ring of integer-valued rational functions that is a GPVD, we must have a base ring that is a GPVD.

\begin{proposition}
	Let $D$ be a domain with $E$ a nonempty subset of the field of fractions of $D$. If $\IntR(E,D)$ is a GPVD, then $D$ is a GPVD. 
\end{proposition}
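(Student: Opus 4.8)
The plan is to exhibit $D$ as a homomorphic image (a quotient) of $\IntR(E,D)$ and then invoke the fact, cited in the excerpt from \cite[Lemma 1]{ParkGPVD}, that homomorphic images of GPVDs are GPVDs. The natural candidate for the homomorphism is evaluation at a fixed point: fix some $a \in E$ and consider the map $\operatorname{ev}_a \colon \IntR(E,D) \to D$ sending $\varphi \mapsto \varphi(a)$. One must first check this is well-defined: since $a \in E$, every $\varphi \in \IntR(E,D)$ satisfies $\varphi(a) \in D$ by definition, and because $\varphi$ is a rational function in $K(x)$ that is defined at every point of $E$ (integer-valued forces the value to exist in $D$), there is no issue with $a$ being a pole. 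It is clearly a ring homomorphism, since evaluation at a point respects addition and multiplication of rational functions whenever all the values are defined.

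Next I would verify surjectivity: for any $d \in D$, the constant rational function $\varphi = d$ lies in $\IntR(E,D)$ and evaluates to $d$ at $a$, so $\operatorname{ev}_a$ is onto. Therefore $D \cong \IntR(E,D)/\ker(\operatorname{ev}_a)$, exhibiting $D$ as a homomorphic image of $\IntR(E,D)$. Strictly speaking, to say $D$ is a homomorphic image of $\IntR(E,D)$ here we also want that $\IntR(E,D)$ is a domain with the right field of fractions structure, but for the GPVD conclusion all we need is that $D$ is the image of a ring homomorphism from a GPVD; combined with $\ker(\operatorname{ev}_a)$ being a prime ideal (as $D$ is a domain) this is immediate. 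Applying \cite[Lemma 1]{ParkGPVD} to $\IntR(E,D)$ and the ideal $\ker(\operatorname{ev}_a)$ yields that $D$ is a GPVD.

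The only genuinely delicate point — and the one I would treat carefully — is the well-definedness of $\operatorname{ev}_a$, i.e. that elements of $\IntR(E,D)$ really can be evaluated at $a$. A rational function $\varphi \in K(x)$ has a finite set of poles in $K$; the definition of $\IntR(E,D)$ requires $\varphi(a) \in D$ for all $a \in E$, which presupposes $\varphi$ has no pole on $E$, so in particular none at our chosen $a$. (If one is worried about representations $\varphi = g/h$ with $h(a) = 0$ but the singularity removable, the value $\varphi(a)$ is still unambiguously defined as an element of $K$, and it lies in $D$ by hypothesis.) Once this is settled, the argument is short. I do not expect any obstacle beyond this bookkeeping; the structural content is entirely carried by Park's lemma on homomorphic images of GPVDs. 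Note also that $E$ nonempty is exactly what is needed to pick the point $a$, which is why that hypothesis appears in the statement.
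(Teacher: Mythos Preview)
Your proposal is correct and follows exactly the same approach as the paper: fix $a\in E$, use the evaluation homomorphism $\operatorname{ev}_a\colon \IntR(E,D)\to D$ (which is surjective since constants lie in $\IntR(E,D)$), and apply Park's lemma that homomorphic images of GPVDs are GPVDs. The paper's proof is a two-line version of yours, omitting the bookkeeping about well-definedness and surjectivity that you spell out.
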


\begin{proof}
	Let $a \in E$ be any element and consider the homomorphism $\IntR(E,D) \to D$ given by evaluation at $a$. Then $D$ is the homomorphic image of $\IntR(E,D)$ so $D$ is a GPVD. 
\end{proof}

In Section \ref{Sect:Pseudosingular}, we introduce the notion of a pseudosingular GPVD, which generalizes the idea of a singular Prüfer domain. We then show that $\IntR(E,D)$ is a GPVD for $D$ a pseudosingular GPVD and $E$ any subset of the field of fractions of $D$. We also show that the Prüfer domain associated with $\IntR(E,D)$ is $\IntR(E,T)$, where $T$ is the Prüfer domain associated with $D$. In Section \ref{Sect:Non-singular}, we completely classify which PVDs $D$, which are not valuation domains, make the ring $\IntR(D)$ a GPVD. In Section \ref{Sect:Local}, we give necessary and sufficient conditions under which the ring $\IntR(K,D)$ is a local domain, where $D$ is a PVD that is not a valuation domain and $K$ is the field of fractions of $D$. Lastly, Section \ref{Sect:FieldMaps} is dedicated to proving a lemma about rational function mappings between fields that is used in Section \ref{Sect:Local}.

\section{Pseudosingular GPVDs}\label{Sect:Pseudosingular}

\indent\indent In this section, we give a family of GPVDs such that their rings of integer-valued rational functions is also a GPVD. We introduce the notion of a pseudosingular GPVD, generalizing notion of a singular Prüfer domain. The notion of a singular Prüfer domain is important as it gives a condition under which the ring $\IntR(E,D)$ is a Prüfer domain. The same proof for Theorem 3.5 in \cite{IntValuedRational} shows that if $D$ is a singular Prüfer domain, then $\IntR(E,D)$ is a Prüfer domain for any subset $E$ of the field of fractions of $D$. For this reason, we generalize this notion to give a condition under which the ring $\IntR(E,D)$ is a GPVD.

\begin{definition}\cite{IntValuedRational}
	Let $D$ be a Prüfer domain. We say $D$ is \textbf{singular} if there exists a family $\Lambda$ of maximal ideals of $D$ so that
	\begin{itemize}
		\item $D = \bigcap\limits_{\m \in \Lambda} D_\m$;
		\item for each $\m \in \Lambda$, the maximal ideal of $D_\m$ is principal, generated by some $t_\m \in D_\m$; and
		\item there exists some $t \in D$ and $n \in \N$ such that for each $\m \in \Lambda$, we have $0 < v_\m(t) < nv_\m(t_\m)$, where $v_\m$ is a valuation associated with $D_\m$. 
	\end{itemize}
\end{definition}

Since there is a Prüfer domain associated with a GPVD, we can impose conditions on the associated Prüfer domain as conditions on the GPVD. 

\begin{definition}
	Let $D$ be a GPVD. We say that $D$ is \textbf{pseudosingular} if the associated Prüfer domain $T$ is singular.
\end{definition}

Because a GPVD has an associated Prüfer domain that is a unibranched extension, it could be useful to have a description of the maximal ideals of the ring we suspect is a GPVD in order to prove that the ring is indeed a GPVD. Some of the maximal ideals of $\IntR(E,D)$ can be defined through maximal ideals of the base ring $D$. 

\begin{definition}
	Let $D$ be a domain and let $E$ be some nonempty subset of the field of fractions of $D$. Take $\m$ to be a maximal ideal of $D$ and $a \in E$. We define the set
	\[
		\M_{ \m, a } \coloneqq \{\varphi \in \IntR(E,D) \mid \varphi(a) \in \m \}.
	\]
	We know that $\M_{ \m, a}$ is a maximal ideal of $\IntR(E,D)$ because it is the kernel of the surjective map $\IntR(E,D) \to D/\m$ given by evaluation at $a$ modulo $\m$. Maximal ideals of $\IntR(E,D)$ of the form $\M_{\m, a}$ are called \textbf{maximal pointed ideals}.
\end{definition}

In general, not all of the maximal ideals of $\IntR(E,D)$ are maximal pointed ideals. Since there is a notion of a limit of a family of ideals using filters and ultrafilters, we can use this notion to potentially describe more maximal ideals of $\IntR(E,D)$.

\begin{definition}
	Let $S$ be a set. A \textbf{filter} $\mathcal{F}$ on $S$ is a collection of subsets of $S$ such that
	\begin{enumerate}
		\item $\emptyset \notin \mathcal{F}$;
		\item if $A, B \in \mathcal{F}$, then $A \cap B \in \mathcal{F}$; and
		\item if $A \in \mathcal{F}$ and $B \subseteq S$ is such that $A \subseteq B$, then $B \in \mathcal{F}$. 
	\end{enumerate}
	If $\mathcal{U}$ is a filter on $S$ such that for every $A \subseteq S$, we have $A \in \mathcal{U}$ or $S \setminus A \in \mathcal{U}$, then we call $\mathcal{U}$ an \textbf{ultrafilter}. Every filter of $S$ is contained in some ultrafilter of $S$ by the Ultrafilter Lemma. 
\end{definition}


\begin{definition}
	Let $R$ be a commutative ring. Take $\{I_\lambda\}_{\lambda \in \Lambda}$ to be a family of ideals of $R$. For each $r \in R$, we define the \textbf{characteristic set of $r$ on $\{I_\lambda\}$} to be
	\[
	\chi_r = \{I_\lambda \mid r \in I_\lambda \}.
	\]
	For a filter $\mathcal{F}$ on $\{I_\lambda\}$, we define the \textbf{filter limit of $\{I_\lambda\}$ with respect to $\mathcal{F}$} as
	\[
	\lim\limits_{\mathcal{F}} I_\lambda = \{r \in R \mid \chi_r \in \mathcal{F} \}.
	\]
	If $\mathcal{F}$ is an ultrafilter, we call $\lim\limits_{\mathcal{F}} I_\lambda$ the \textbf{ultrafilter limit of $\{I_\lambda\}$ with respect to $\mathcal{F}$}.
\end{definition}

\begin{remark}
	The filter and ultrafilter limits of a family of ideals are also themselves ideals. If $\{\p_\lambda\}_{\lambda \in \Lambda}$ is a family of prime ideals and $\mathcal{U}$ is an ultrafilter of $\{\p_\lambda\}$, then the ultrafilter limit $\lim\limits_{\mathcal{U}} \p_\lambda$ is also a prime ideal. This gives rise to the \textbf{ultrafilter topology} on $\Spec(R)$, which is identical to the patch topology and the constructible topology on $\Spec(R)$ \cite{UltrafilterTopology}. 
\end{remark}

We will now use a description of the maximal ideals using maximal pointed ideals and ultrafilter limits to show that rings of integer-valued rational functions over pseudosingular GPVDs are also GPVDs. We will also utilize the rational function $\theta(x) =\frac{t(1+x^{2n})}{(1+tx^n)(t+x^n)}$, where $t$ and $n$ come from the definition of a singular Prüfer domain. This rational function played an integral role in showing that a ring of integer-valued rational functions over a singular Prüfer domain is a Prüfer domain in \cite{IntValuedRational}. 

\begin{theorem}\label{Thm:Pseudosingular}
	Let $D$ be a pseudosingular GPVD. Denote by $K$ the field of fractions of $D$ and let $E$ be a nonempty subset of $K$. Suppose that $T$ is the associated Prüfer domain of $D$. Then $\IntR(E,D)$ is a GPVD with associated Prüfer domain $\IntR(E,T)$. 
%
\end{theorem}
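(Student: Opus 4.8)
The plan is to exhibit $T' \coloneqq \IntR(E,T)$ as the associated Prüfer domain of $\IntR(E,D)$, using the characterization via a common radical ideal from the Dobbs--Fontana remark. First I would record that $\IntR(E,T)$ is indeed Prüfer: since $D$ is pseudosingular, $T$ is a singular Prüfer domain, and by the remark before the theorem the same argument as in Theorem 3.5 of \cite{IntValuedRational} (built around $\theta(x) = \frac{t(1+x^{2n})}{(1+tx^n)(t+x^n)}$) shows $\IntR(E,T)$ is Prüfer. Next I would produce the common radical ideal. Let $J$ be the common nonzero radical ideal witnessing that $D$ is a GPVD with associated Prüfer domain $T$, so $D/J \subseteq T/J$ is a unibranched extension of zero-dimensional rings; I would take the candidate common ideal of $\IntR(E,D)$ and $\IntR(E,T)$ to be $\mathfrak{J} \coloneqq \IntR(E,J)$ (or its radical), i.e.\ the rational functions sending every point of $E$ into $J$. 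One checks that $\mathfrak{J}$ is a common ideal of both rings, that it is radical because $J$ is (if $\varphi^k(a) \in J$ for all $a$ then $\varphi(a)\in\sqrt J = J$), and that it is nonzero since $J$ is.

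The heart of the argument is then to verify the two structural conditions: (i) $\IntR(E,D) \subseteq \IntR(E,T)$ is unibranched, and (ii) modulo $\mathfrak{J}$ both quotient rings are zero-dimensional with the contraction map a bijection. For (ii) I would analyze primes of $\IntR(E,T)$ containing $\mathfrak{J}$: such a prime pulls back, via evaluation at points of $E$ together with ultrafilter limits, to primes of $T$ containing $J$, which are maximal; the key point is that the description of maximal (and relevant prime) ideals of $\IntR(E,T)$ in terms of maximal pointed ideals $\mathcal{M}_{\mathfrak{m},a}$ and their ultrafilter limits, available because $\IntR(E,T)$ is Prüfer, forces every prime of $\IntR(E,T)$ containing $\mathfrak{J}$ to be maximal, and similarly on the $D$ side. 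For (i), and the bijectivity in (ii), I would exploit that $D/J \subseteq T/J$ being unibranched of dimension zero means each residue field extension is purely inseparable-like / the spectra agree, so that a prime of $\IntR(E,D)$ has a unique prime of $\IntR(E,T)$ lying over it: given a prime of $\IntR(E,D)$, localize and use that $T$ is an integral-like unibranched extension fiberwise, so the extension $\IntR(E,D) \subseteq \IntR(E,T)$ inherits unibranchedness — concretely, a prime of $\IntR(E,T)$ over a given prime of $\IntR(E,D)$ is determined by the ultrafilter of points it "uses" and the maximal ideals of $T$ at those points, and the unibranched hypothesis on $D \subseteq T$ pins down the latter uniquely.

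The main obstacle I anticipate is (i): showing the contraction map $\Spec\IntR(E,T) \to \Spec\IntR(E,D)$ is a bijection, rather than just surjective with finite fibers. Surjectivity and integrality-type control are comparatively easy, but injectivity requires knowing that two distinct primes of $\IntR(E,T)$ cannot contract to the same prime of $\IntR(E,D)$; away from $\mathfrak{J}$ this should follow because $T' = \IntR(E,T)$ and $D' = \IntR(E,D)$ agree there after a suitable localization (the pseudosingular structure makes $\theta$ invertible-ish, trivializing the extension outside $J$), while over $\mathfrak{J}$ it reduces to the zero-dimensional unibranched statement for $D/J \subseteq T/J$ transported through the point-evaluation/ultrafilter-limit description of primes. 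I would handle the "away from $\mathfrak{J}$" part by showing $\IntR(E,D)$ and $\IntR(E,T)$ have the same localizations at primes not containing $\mathfrak{J}$ (using that $D$ and $T$ do, outside $J$), and the "over $\mathfrak{J}$" part by the ultrafilter-limit bookkeeping; assembling these two pieces into a clean global bijection is where the real care is needed, and I would likely isolate it as a separate lemma.
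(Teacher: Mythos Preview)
Your overall architecture matches the paper's: take $\mathfrak{J} = \IntR(E,J)$ as the common radical ideal and verify the Dobbs--Fontana equivalent characterization (unibranched extension of zero-dimensional quotients). But two steps in your sketch do not go through as written.

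First, your argument that every prime of $\IntR(E,T)$ (and of $\IntR(E,D)$) containing $\mathfrak{J}$ is maximal is not an argument. You say such a prime ``pulls back \dots\ to primes of $T$ containing $J$, which are maximal,'' but there is no ring map $\IntR(E,T)\to T$ to pull back along; the evaluation maps only give you that $\mathfrak{P}\cap T$ is maximal in $T$, which does not force $\mathfrak{P}$ to be maximal in $\IntR(E,T)$. The paper proves maximality directly: for $\varphi\notin\mathfrak{P}$ it exhibits $\psi=\dfrac{\varphi^n}{t+\varphi^{2n}}$ in the ring and computes $\varphi^n(1-\varphi^n\psi)=t\psi\in\mathfrak{P}$, so $\varphi$ is invertible modulo $\mathfrak{P}$. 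On the $D$-side the same $\psi$ is used, but checking $\psi\in\IntR(E,D)$ (not just $\IntR(E,T)$) requires a genuine computation at each maximal ideal of $D$ via the PVD structure; your ``similarly on the $D$ side'' elides this.

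Second, you assume the ultrafilter-limit description of maximal ideals of $\IntR(E,D)$ is ``available because $\IntR(E,T)$ is Pr\"ufer.'' For $\IntR(E,T)$ it does follow from the Pr\"ufer property (every maximal ideal is a $t$-ideal), but $\IntR(E,D)$ is not known to be Pr\"ufer, so the standard argument does not apply. The paper establishes this description by hand: after checking that $\theta\in\IntR(K,D)$ (another PVD-level computation), it shows that for $\varphi_1,\varphi_2$ in a proper ideal the element $\rho=\varphi_1+\theta(\varphi_1/\varphi_2)\,\varphi_2$ satisfies $\chi_\rho=\chi_{\varphi_1}\cap\chi_{\varphi_2}$, giving the finite intersection property needed to extend to an ultrafilter. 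This is precisely what makes surjectivity of the contraction map work, so your claim that ``surjectivity \dots\ is comparatively easy'' misjudges where the labor is. Injectivity is likewise handled in the paper by an explicit rational function $\psi(x)=\dfrac{\varphi(x)}{\varphi(x)+\theta(\varphi(x))}$, not by the abstract ``integral-like unibranched fiberwise'' reasoning you suggest (note $D\subseteq T$ need not be integral). Your proposed ``away from $\mathfrak{J}$'' localization step is unnecessary under the equivalent definition and is not carried out in the paper.
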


\begin{proof}
	Let $J$ denote the nonzero radical ideal common to $D$ and $T$ so that $D/J \subseteq T/J$ is a unibranched extension of Krull dimension zero rings. Such an ideal exists since $D$ is a GPVD.

	Since $D$ is pseudosingular, we know that $T$ is singular. Therefore, there exists a collection $\Lambda$ of maximal ideals of $T$ such that
	\begin{itemize}
		\item $T = \bigcap\limits_{\m \in \Lambda} T_\m$,
		\item for each $\m \in \Lambda$, the maximal ideal of the valuation domain $T_\m$ is principally generated by some $t_\m \in T_\m$, and
		\item there exists some $t \in T$ and $n \in \N$ such that $0 < v_\m(t) < nv_\m(t_\m) $ for all $\m \in \Lambda$, where $v_\m$ is a valuation associated with $T_\m$. 
	\end{itemize}
	We fix $t$ and $n$ for the rest of proof, as well as $v_\m$ for each $\m \in \Lambda$. Furthermore, since we are considering two rings of integer-valued rational functions at once, we need notation to clarify of which ring the maximal pointed ideals are ideals. Let $a \in E$ and $\m \in \Lambda$. We define $\M_{\m, a}^T \coloneqq \{\varphi \in \IntR(E,T) \mid \varphi(a) \in \m \}$ as an ideal of $\IntR(E,T)$. Let $\mathfrak{n} \coloneqq \m \cap D$. We define $\M_{\mathfrak{n}, a}^D \coloneqq \{\varphi \in \IntR(E, D) \mid \varphi(a) \in \mathfrak{n} \}$ as an ideal of $\IntR(E,D)$. 
	
	We first give the statements we want to prove. The proofs of the statements will follow. 
	
	\begin{enumerate}
		\item 	Every maximal ideal of $T$ is an ultrafilter limit of ideals in $\Lambda$.  Furthermore, $t$ is in the Jacobson radical of $T$. 
		
		\item 	The ideal $\IntR(E,J)$ is a nonzero radical ideal of both $\IntR(E,D)$ and $\IntR(E,T)$. 
		
		\item 	A prime ideal of $\IntR(E,T)$ containing $\IntR(E,J)$ is maximal in $\IntR(E,T)$. 
		
		\item 	A prime ideal of $\IntR(E,D)$ containing $\IntR(E,J)$ is maximal in $\IntR(E,D)$. 
		
		\item 	Every maximal ideal of $\IntR(E,T)$ can be written in the form $\lim\limits_{\mathcal{U}} \M_{\m,a}^T$ for some ultrafilter $\mathcal{U}$ of $\{\M_{\m, a}^T \mid a \in E, \m \in \Lambda\}$.

		\item Every maximal ideal of $\IntR(E,D)$ can be written in the form $\lim\limits_{\mathcal{U}} \M_{\mathfrak{n},a}^D$ for some ultrafilter $\mathcal{U}$ of $\{\M_{\mathfrak{n}, a}^D \mid a \in E, \m \in \Lambda, \mathfrak{n} = \m \cap D \}$.
		
		\item Suppose that $\mathcal{U}_1$ and $\mathcal{U}_2$ are ultrafilters of $\{\M_{\m, a}^T \mid a \in E, \m \in \Lambda\}$ such that $\lim\limits_{\mathcal{U}_1} \M_{\m,a}^T$ and $\lim\limits_{\mathcal{U}_2} \M_{\m,a}^T$ are distinct maximal ideals of $\IntR(E,T)$. Then $\lim\limits_{\mathcal{U}_1} \M_{\m,a}^T \cap \IntR(E, D) \neq \lim\limits_{\mathcal{U}_2} \M_{\m,a}^T \cap \IntR(E,D)$.

		\item 	The ring $\IntR(E,D)$ is a GPVD with associated Prüfer domain $\IntR(E,T)$.
	\end{enumerate}
	
	The first four items are to establish that there is a nonzero radical ideal, namely $\IntR(E,J)$, common to $\IntR(E,D)$ and $\IntR(E,T)$ such that each prime ideal of $\IntR(E,D)$ or $\IntR(E,T)$ containing that common nonzero radical ideal is actually a maximal ideal of the ring to which it belongs. The next three statements shows that $\IntR(E,D)/\IntR(E,J) \subseteq \IntR(E,T)/\IntR(E,J)$ is a unibranched extension. Now we prove the claims. 
	
	\begin{enumerate}
		\item Claim: Every maximal ideal of $T$ is an ultrafilter limit of ideals in $\Lambda$.  Furthermore, $t$ is in the Jacobson radical of $T$. 
		
		Since $T$ is a Prüfer domain, every nonzero ideal of $T$ is a $t$-ideal. In particular, every maximal ideal of $T$ is a $t$-ideal. Writing $T$ as  $T = \bigcap\limits_{\m \in \Lambda} T_\m$, we see that every maximal ideal of $T$ is an ultrafilter limit of ideals in $\Lambda$ \cite[Proposition 2.8]{PvMD}.
		
		Now let $\mathfrak{a}$ be a maximal ideal of $T$. We know that $\mathfrak{a}$ is an ultrafilter limit of ideals in $\Lambda$. Since $t \in \m$ for all $\m \in \Lambda$, we also have that $t \in \mathfrak{a}$. Since $t$ is in all of the maximal ideas of $T$, we can conclude that $t$ is in the Jacobson radical of $T$.
		
		\item 	Claim: The ideal $\IntR(E,J)$ is a nonzero radical ideal of both $\IntR(E,D)$ and $\IntR(E,T)$. 
		
		Since $J$ is a nonzero ideal of both $D$ and $T$, we know that $\IntR(E,J)$ is a nonzero ideal of both $\IntR(E,D)$ and $\IntR(E,T)$. Now suppose that $\varphi \in \IntR(E,D)$ is such that there exists an $m \in \N$ such that $\varphi^m \in \IntR(E,J)$. Then for any $a \in E$, we have $\varphi(a)^m \in J$. Since $\varphi(a) \in D$ and $J$ is a radical ideal of $D$, we have that $\varphi(a) \in J$. Thus, $\varphi \in \IntR(E, J)$. This shows that $\sqrt{\IntR(E,J)} \subseteq \IntR(E,J)$ and therefore $\IntR(E,J)$ is a radical ideal of $\IntR(E,D)$. The same argument shows that $\IntR(E,J)$ is a radical ideal of $\IntR(E,T)$. 
		
		\item 	Claim: A prime ideal of $\IntR(E,T)$ containing $\IntR(E,J)$ is a maximal ideal of $\IntR(E,T)$. 
		
		Take $\P$ to be a prime ideal of $\IntR(E,T)$ containing $\IntR(E,J)$. Then let $\varphi \in \IntR(E,T) \setminus \mathfrak{P}$. Set $\psi = \frac{\varphi^n}{t+\varphi^{2n}}$. We want to show that $\psi \in \IntR(E,T)$. Take an element $a \in E$. Then for each $\m \in \Lambda$, we have 
		\[
		v_\m(\psi(a)) = \begin{cases}
			0, & \text{if $v_\m(\varphi(a)) = 0$},\\
			nv_\m(\varphi(a)) - v_\m(t) > 0, & \text{if $v_\m(\varphi(a)) > 0$}.
		\end{cases}
		\]
		Since $T = \bigcap\limits_{\m \in \Lambda} T_\m$, we have that $\psi(a) \in T$. This holds for all $a \in E$, so $\psi \in \IntR(E,T)$.
		
		Now we observe that
		\[
		\varphi^n(1-\varphi^n\psi) = \varphi^n\cdot \frac{t+\varphi^{2n} - \varphi^n \varphi^n}{t+\varphi^{2n}} = t\psi.
		\]
		We have that $J \subseteq \mathfrak{P} \cap T$, so $\mathfrak{P} \cap T$ is a maximal ideal of $T$. Thus, $t \in \mathfrak{P} \cap T \subseteq \mathfrak{P}$, and therefore $t\psi \in \mathfrak{P}$. 
		
		We now have that $\varphi^n(1-\varphi^n\psi) \in \mathfrak{P}$. Since $\varphi \notin \mathfrak{P}$, we must have $1 - \varphi^n\psi \in \mathfrak{P}$. This means that $\varphi$ has an inverse modulo $\mathfrak{P}$. The previous statement holds for any $\varphi \in \IntR(E,T) \setminus \mathfrak{P}$, implying that $\mathfrak{P}$ is a maximal ideal in $\IntR(E,T)$. 
		
		\item 	Claim: A prime ideal of $\IntR(E,D)$ containing $\IntR(E,J)$ is a maximal ideal of $\IntR(E,D)$. 
		
		Take $\P$ to be a prime ideal of $\IntR(E,D)$ containing $\IntR(E,J)$. Then let $\varphi \in \IntR(E,D) \setminus \mathfrak{P}$. Set $\psi = \frac{\varphi^n}{t+\varphi^{2n}}$. We want to show that $\psi \in \IntR(E,D)$.
		
		Take an element $a \in E$ and a maximal ideal $\mathfrak{a}$ of $D$. Let $\mathfrak{a}'$ be the unique maximal ideal of $T$ that contracts to $\mathfrak{a}$. We know that $\mathfrak{a}'$ is an ultrafilter limit of ideals in $\Lambda$. For $\m \in \Lambda$, we have that $\psi(a) \in \m$ if and only if $\varphi(a) \in \m$, as calculated in the third claim. This implies that $\psi(a) \in \mathfrak{a}'$ if and only if $\varphi(a) \in \mathfrak{a}'$. 
		
		If $\varphi(a) \in \mathfrak{a}'$, then $\psi(a) \in \mathfrak{a}' \subseteq \mathfrak{a}' T_{\mathfrak{a}'} = \mathfrak{a}D_{\mathfrak{a}} \subseteq D_\mathfrak{a}$. 
		
		If $\varphi(a) \notin \mathfrak{a}'$, then we have $\varphi(a)^{2n} \in D \setminus \mathfrak{a}' \subseteq D_\mathfrak{a}^\times$ and combining with the fact that $t \in \mathfrak{a}' \subseteq \mathfrak{a}' T_{\mathfrak{a}'} = \mathfrak{a} D_\mathfrak{a}$ yields $t + \varphi(a)^{2n} \in D_\mathfrak{n}^\times$. Using the fact that $\varphi(a)^n \in D_\mathfrak{a}$ if $\varphi(a) \notin \mathfrak{a}'$, we get that $\psi(a) \in D_\mathfrak{a}$. 
		
		No matter if $\varphi(a) \in \mathfrak{a}'$ or $\varphi(a) \notin \mathfrak{a}'$, we get that $\varphi(a) \in D_{\mathfrak{a}}$. This holds for all $a \in E$ and all maximal ideals $\mathfrak{a}$ of $D$. Thus, $\psi \in \IntR(E,D)$. 
		
		Now we observe that
		\[
		\varphi^n(1-\varphi^n\psi) = \varphi^n\cdot \frac{t+\varphi^{2n} - \varphi^n \varphi^n}{t+\varphi^{2n}} = t\psi.
		\]
		We claim that $t\psi \in \mathfrak{P}$. Let $\mathfrak{a}$ be a maximal ideal of $D$. Let $\mathfrak{a}'$ be the unique maximal ideal of $T$ that contracts to $\mathfrak{a}$. We know $t \in \mathfrak{a} D_\mathfrak{a}$ and thus $t \in D_\mathfrak{a}$ for all maximal ideals $\mathfrak{a}$ of $D$. Thus, $t \in D$. Since $\P$ contains $\IntR(E,J)$, we have that $J \subseteq \mathfrak{P} \cap D$, so $\q_1 := \mathfrak{P} \cap D$ is a maximal ideal of $D$. Let $\q_2$ the unique maximal ideal of $T$ that contracts to $\q_1$. We know that $t \in \q_2 \subseteq \q_2 T_{\q_2} = \q_1 D_{\q_1}$. Now we see that $t \in \q_1 D_{\q_1} \cap D = \q_1 \subseteq \mathfrak{P}$. Thus, $t\psi \in \mathfrak{P}$.
		
		We now have that $\varphi^n(1-\varphi^n\psi) \in \mathfrak{P}$. Since $\varphi \notin \mathfrak{P}$, we must have $1 - \varphi^n\psi \in \mathfrak{P}$. This means that $\varphi$ has an inverse modulo $\mathfrak{P}$ for all $\varphi \in \IntR(E,D) \setminus \P$, implying that $\mathfrak{P}$ is a maximal ideal in $\IntR(E,D)$. 
		
		\item Claim: 	Every maximal ideal of $\IntR(E,T)$ is of the form $\lim\limits_{\mathcal{U}} \M_{\m,a}^T$ for some ultrafilter $\mathcal{U}$ of $\{\M_{\m, a}^T \mid a \in E, \m \in \Lambda\}$.
		
		Since $T$ is a singular Prüfer domain, we know that $\IntR(E,T)$ is a Prüfer domain \cite[Theorem 3.5]{IntValuedRational}. Then we know every maximal ideal of $\IntR(E,T)$ is a $t$-ideal. Additionally, we write 
		\[
		\IntR(E,T) = \bigcap_{a \in E} \bigcap_{\m \in \Lambda} \IntR(E,T)_{\M_{\m, a}^T}
		\]
		and then Proposition 2.8 of \cite{PvMD} proves the claim.
		
		\item  Claim:  Every maximal ideal of $\IntR(E,D)$ is of the form $\lim\limits_{\mathcal{U}} \M_{\mathfrak{n},a}^D$ for some ultrafilter $\mathcal{U}$ of $\{\M_{\mathfrak{n}, a}^D \mid a \in E, \m \in \Lambda, \mathfrak{n} = \m \cap D \}$.
		
		We will take our characteristic sets with respect to \[\{\M_{\mathfrak{n}, a}^D \mid a \in E, \m \in \Lambda, \mathfrak{n} = \m \cap D\}.\] Let $A \subseteq \IntR(E,D)$ be a proper ideal. We want to show that $U := \{\chi_\varphi \mid \varphi \in A \}$ is closed under finite intersections.
		
		Take $\varphi_1, \varphi_2 \in A$. Set \[\theta(x) = \frac{t(1+x^{2n})}{(1+tx^n)(t+x^n)}.\] We claim that $\theta \in \IntR(K,D)$.
		
		Take $a \in K$ and let $\mathfrak{p}$ be a maximal ideal of $D$. Let $\mathfrak{p}'$ be the unique maximal ideal of $T$ that contracts to $\mathfrak{p}$. We know that $\p$ is an ultrafilter limit of ideals in $\Lambda$ with respect to some ultrafilter $\mathcal{U}$ of $\Lambda$. Let $\m \in \Lambda$. We have that $v_\m(a) > 0$ implies $v_\m\left(\frac{a^n}{t}\right) > 0$, meaning $a \in \p'$ implies $\frac{a^n}{t} \in \p'$. Since $a \in \p'$ implies that $\{\m \in \Lambda \mid a \in \m \} \in \mathcal{U}$, we know that $\{\m \in \Lambda \mid  \frac{a^n}{t}\in \m \} \in \mathcal{U}$ as well. This means that $a \in \p'$ implies $\frac{a^n}{t} \in \p'$. Note that $t \in \p'$ as well. 
		
		Now we calculate. Denote by $v$ a valuation associated to $T_{\p'}$. If $v(a) = 0$, then $v(\theta(a)) = v(t) + v(1+a^{2n}) - 0 - 0 > 0$, so $\theta(a) \in \p' T_{\p'} = \mathfrak{p} D_\mathfrak{p} \subseteq D_\mathfrak{p}$. If $v(a) > 0$, then 
		\[
		\theta(a) = \frac{t(1+a^{2n})}{(1+ta^n)(t+a^n)} = \frac{1+a^{2n}}{(1+ta^n)(1+ \frac{a^n}{t})}. 
		\] 
		We have $1+a^{2n} \equiv 1 \pmod {\p'}$ and $(1+ta^n)(1+ \frac{a^n}{t}) \equiv 1 \cdot 1 \equiv 1 \pmod {\p'}$. This means that $\theta(a) \in 1+ \p' T_{\p'} \subseteq D_\mathfrak{p}$. Lastly, suppose that $v(a) < 0$. We calculate that
		\[
		\theta(a) = \frac{t(1+a^{2n})}{(1+ta^n)(t+a^n)} = \frac{\frac{1}{a^{2n}}+1}{(\frac{1}{ta^n}+1)(\frac{t}{a^n}+ 1)}. 
		\]
		We see that $\frac{1}{a^{2n}}+1 \equiv 1 \pmod {\p'}$ and also $(\frac{1}{ta^n}+1)(\frac{t}{a^n}+ 1) \equiv 1 \cdot 1 \equiv 1 \pmod {\p'}$. Thus, $\theta(a) \in 1+ \p' T_{\p'} \subseteq D_\mathfrak{p}$.  We now know that $\theta(a) \in D_\mathfrak{p}$ for all $a \in K$ and maximal ideals $\mathfrak{p}$ of $D$, so $\theta \in \IntR(K,D)$.

		Now we consider $\rho(x) = \varphi_1(x) + \theta\left( \frac{\varphi_1(x)}{\varphi_2(x)}\right) \varphi_2(x)$. Since $\theta\left( \frac{\varphi_1(x)}{\varphi_2(x)}\right) \in \IntR(K,D)$, which is contained in $\IntR(E,D)$, we have that $\rho(x) \in (\varphi_1, \varphi_2) \subseteq A$.

		Let $a \in E$. Now let $\mathfrak{m} \in \Lambda$. Suppose that $v_\m(\varphi_1(a)) = v_\m(\varphi_2(a))$. Then $v_\m(\rho(a)) = v_\m(\varphi_1(a))$. If $v_\m(\varphi_1(a)) < v_\m(\varphi_2(a)))$, we have $v_\m(\rho(a)) = v_\m(\varphi_1(a))$. If $v_\m(\varphi_1(a)) > v_\m(\varphi_2(a))$, we have $v_\m(\rho(a)) = v_\m(\varphi_2(a))$. In summary, $v_\m(\rho(a)) = \min\{v_\m(\varphi_1(a)), v_\m(\varphi_2(a)) \}$. This implies that $\chi_{\varphi_1} \cap \chi_{\varphi_2} = \chi_{\rho}$. Thus, $U$ is closed under finite intersections.
		
		Since $A$ is a proper ideal, $U$ does not contain the empty set. We have just shown that $U$ is closed under finite intersections, so we can deduce that $U$ has the finite intersection property. We can then extend $U$ to $\mathcal{U}$, an ultrafilter of $\{\M_{\mathfrak{n}, a}^D \mid a \in E, \m \in \Lambda, \mathfrak{n} =\m \cap D\}$. Then we see that $A \subseteq \lim\limits_{\mathcal{U}}\M_{\mathfrak{n}, a}^D$. Thus, all maximal ideals of $\IntR(E,D)$ are of the form $\lim\limits_{\mathcal{U}}\M_{\mathfrak{n}, a}^D$ for some ultrafilter $\mathcal{U}$ of $\{\M_{\mathfrak{n}, a}^D \mid a \in E, \m \in \Lambda, \mathfrak{n} =\m \cap D\}$.
		
		\item Claim: Suppose that $\mathcal{U}_1$ and $\mathcal{U}_2$ are ultrafilters of $\{\M_{\m, a}^T \mid a \in E, \m \in \Lambda\}$ such that $\lim\limits_{\mathcal{U}_1} \M_{\m,a}^T$ and $\lim\limits_{\mathcal{U}_2} \M_{\m,a}^T$ are distinct maximal ideals of $\IntR(E,T)$. Then $\lim\limits_{\mathcal{U}_1} \M_{\m,a}^T \cap \IntR(E, D) \neq \lim\limits_{\mathcal{U}_2} \M_{\m,a}^T \cap \IntR(E,D)$.
		
		Suppose that $\mathcal{U}_1$ and $\mathcal{U}_2$ are ultrafilters of $\{\M_{\m, a}^T \mid a \in E, \m \in \Lambda\}$ such that $\lim\limits_{\mathcal{U}_1} \M_{\m,a}^T$ and $\lim\limits_{\mathcal{U}_2} \M_{\m,a}^T$ are distinct maximal ideals of $\IntR(E,T)$. Then $\lim\limits_{\mathcal{U}_1} \M_{\m,a}^T \cap \IntR(E, D) \neq \lim\limits_{\mathcal{U}_2} \M_{\m,a}^T \cap \IntR(E,D)$.
		
		Let $\varphi \in \lim\limits_{\mathcal{U}_2} \M_{\m,a}^T \setminus \lim\limits_{\mathcal{U}_1} \M_{\m,a}^T$. Consider
		\[
		\psi(x) = \frac{\varphi(x)}{\varphi(x) + \theta(\varphi(x))}, 
		\]
		where $\theta(x) = \frac{t(1+x^{2n})}{(1+tx^n)(t+x^n)}$ from Claim 6. Take a maximal ideal $\mathfrak{p}$ of $D$. We know that $\mathfrak{p}$ is the contraction of some maximal ideal $\p'$ of $T$. Let $v$ denote a valuation corresponding to $T_{\p'}$. Now take $a \in E$. If $v(\varphi(a)) > 0$, then $v(\psi(a)) = v(\varphi(a)) - v(\theta(\varphi(a))) = v(\varphi(a)) > 0$, so $\varphi(a) \in D_\mathfrak{p}$. If $v(\varphi(a)) = 0$, then $v(\theta(\varphi(a))) > 0$ so $\varphi(a) + \theta(\varphi(a)) \in \varphi(a) + \p' T_{\p'} \subseteq D_\mathfrak{p}^\times$. This shows that $\psi(a) \in D_\mathfrak{p}$. Thus, $\psi(a) \in D$ and therefore $\psi \in \IntR(E,D)$. From this, we also see that $\varphi(a) \in \p'$ if and only if $\psi(a) \in \mathfrak{p}$. Thus, $\psi$ is in $\lim\limits_{\mathcal{U}_2} \M_{\m,a}^T \cap \IntR(E,D)$ but not in $\lim\limits_{\mathcal{U}_1} \M_{\m,a}^T \cap \IntR(E,D)$.

		\item Claim: 	The ring $\IntR(E,D)$ is a GPVD with associated Prüfer domain $\IntR(E,T)$.
		
		We know that the ideal $\IntR(E, J)$ is a nonzero radical ideal common to both $\IntR(E, D)$ and $\IntR(E,T)$ from Claim 2. Additionally, any prime ideal of $\IntR(E,D)$ containing $\IntR(E, J)$ is maximal in $\IntR(E,D)$ and any prime ideal of $\IntR(E,T)$ containing $\IntR(E, J)$ is maximal in $\IntR(E,T)$ by Claims 3 and 4. Thus,
		\[
		\dim(\IntR(E,D)/\IntR(E,J)) = \dim(\IntR(E,T)/\IntR(E,J)) = 0.
		\]
		Next, we need to show that the extension \[\IntR(E,D)/\IntR(E,J) \subseteq \IntR(E,T)/\IntR(E,J)\] is unibranched. Every maximal ideal of $\IntR(E,T)$ containing $\IntR(E,J)$ is of the form $\lim\limits_{\mathcal{U}} \M_{\m,a}^T$ for some ultrafilter $\mathcal{U}$ of $\{\M_{\m, a}^T \mid a \in E, \m \in \Lambda\}$, and two distinct ideals of this form contract to two distinct ideals of $\IntR(E,D)$ containing $\IntR(E,J)$ due to Claim 7. 
		
		Now we take a maximal ideal of $\IntR(E,D)$ containing $\IntR(E,J)$. From Claim 6, we know that this maximal ideal has the form $\lim\limits_{\mathcal{U}} \M_{\mathfrak{n},a}^D$ for some ultrafilter $\mathcal{U}$ of $\{\M_{\mathfrak{n}, a}^D \mid a \in E, \m \in \Lambda, \mathfrak{n} = \m \cap D \}$. We construct the ultrafilter
		\[
		\mathcal{U}' \coloneqq \{ \{\M_{\m, a}^T \mid \M_{\mathfrak{n},a}^D \in S, \m \cap D = \mathfrak{n} \} \mid S \in \mathcal{U} \}
		\]
		of $\{\M_{\m, a}^T \mid a \in E, \m \in \Lambda\}$. We claim that $\lim\limits_{\mathcal{U}} \M_{\mathfrak{n},a}^D = \lim\limits_{\mathcal{U}'} \M_{\mathfrak{m},a}^T \cap \IntR(E,D)$. Let $\varphi \in \lim\limits_{\mathcal{U}} \M_{\mathfrak{n},a}^D$. Then $\{\M_{\mathfrak{n},a}^D \mid \varphi(a) \in \mathfrak{n}, a \in E, \m \in \Lambda, \mathfrak{n} = \m \cap D \} \in \mathcal{U}$. Since $\mathfrak{m}\cap D \subseteq \m$, we have that $\{\M_{\m, a}^T \mid \varphi(a) \in \m, a \in E, \m \in \Lambda \} \in \mathcal{U}'$. Thus, $\varphi \in \lim\limits_{\mathcal{U}'} \M_{\mathfrak{m},a}^T \cap \IntR(E,D)$. To show the reverse inclusion, we now suppose that $\varphi \in \lim\limits_{\mathcal{U}'} \M_{\mathfrak{m},a}^T \cap \IntR(E,D)$. Then $\{\M_{\m, a}^T \mid \varphi(a) \in \m, a \in E, \m \in \Lambda \} \in \mathcal{U}'$. Since $\varphi$ is in $\IntR(E,D)$ as well, we know for any $a \in E$ and $\m \in \Lambda$ that $\varphi(a) \in \m$ implies that $\varphi(a) \in \m \cap D$. Thus, $\{\M_{\mathfrak{n},a}^D \mid \varphi(a) \in \mathfrak{n}, a \in E, \m \in \Lambda, \mathfrak{n} = \m \cap D \}$ is in $\mathcal{U}$ and therefore $\varphi \in \lim\limits_{\mathcal{U}} \M_{\mathfrak{n},a}^D$. This implies the contraction map of the prime spectra of the extension $\IntR(E,D)/\IntR(E,J) \subseteq \IntR(E,T)/\IntR(E,J)$ is surjective.

		Thus, $\IntR(E,D)/\IntR(E,J) \subseteq \IntR(E,T)/\IntR(E,J)$ is a unibranched extension. This shows that $\IntR(E,D)$ is a GPVD with associated Prüfer domain $\IntR(E,T)$.
	\end{enumerate}
\end{proof}

\section{PVDs with non-singular associated valuation domains}\label{Sect:Non-singular}

\indent\indent Now that we have seen that rings of integer-valued rational functions over pseudosingular GPVDs are GPVDs, we investigate rings of integer-valued rational functions over a base ring that is a GPVD but not a pseudosingular GPVD. We restrict our focus to the base ring being a PVD. A PVD being not pseudosingular means that its associated valuation overring is not singular, which means that the maximal ideal of the associated valuation overring is not principal. Thus, we consider the case where the base ring is a PVD whose associated valuation overring does not have a principal maximal ideal. 

For a PVD, we can make use of the valuation associated with the associated valuation overring. This allows us to utilize the tool of the minimal valuation function. Here, we view a value group $\Gamma$ as being embedded in its divisible closure $\Q\Gamma \coloneqq \Gamma \otimes_\Z \Q$.

\begin{definition}\cite{Liu}
	Let $V$ be a valuation domain with value group $\Gamma$, valuation $v$, and field of fractions $K$. Take a nonzero polynomial $f \in K[x]$ and write it as $f(x) = a_nx^n + \cdots + a_1x+ a_0$ for $a_0, a_1, \dots, a_n \in K$. We define the \textbf{minimum valuation function of $f$} as $\minval_{f}: \Gamma \to \Gamma $ by \[\gamma \mapsto \min\{v(a_0), v(a_1)+\gamma, v(a_2)+2\gamma, \dots, v(a_n) + n\gamma\}\] for each $\gamma \in \Gamma$. We may also think of $\minval_f$ as a function from $\Q{\Gamma}$ to $\Q{\Gamma}$ defined as $\gamma \mapsto \min\{v(a_0), v(a_1)+\gamma, v(a_2)+2\gamma, \dots, v(a_n) + n\gamma\}$ for each $\gamma \in \Q{\Gamma}$. 
	
	For a nonzero rational function $\varphi \in K[x]$, we write $\varphi = \frac{f}{g}$ for some $f, g \in K[x]$. Then for each $\gamma \in \Gamma$, we define $\minval_\varphi(\gamma) = \minval_f(\gamma) - \minval_g(\gamma)$.
\end{definition}

The purpose of the minimum valuation function of a rational function is that it can predict the valuation of the outputs of the rational function most of the time. The minimum valuation function also has the nice property of being piecewise linear. The following lemma showcases these facts. 

\begin{lemma}\cite[Proposition 2.24, Lemma 2.26, Proposition 2.27]{Liu}\label{Lem:MinvalForm}
	Let $V$ be a valuation domain with value group $\Gamma$, valuation $v$, maximal ideal $\m$, and field of fractions $K$. For a nonzero $\varphi \in K(x)$, the function $\minval_\varphi$ has the following form evaluated at $\gamma \in \Q\Gamma$
	\[
	\minval_\varphi(\gamma) = \begin{cases}
		c_1 \gamma + \beta_1, & \gamma \leq \delta_1,\\
		c_2 \gamma + \beta_2, & \delta_1 \leq \gamma \leq \delta_2,\\
		\vdots\\
		c_{k-1} \gamma + \beta_{k-1}, & \delta_{k-2} \leq \gamma  \leq \delta_{k-1},\\
		c_k \gamma + \beta_k, & \delta_{k-1} \leq \gamma,
	\end{cases}
	\] 
	where $c_1, \dots, c_k \in \Z$; $\beta_1, \dots, \beta_k \in \Gamma$; and $\delta_1, \dots, \delta_{k-1} \in \Q{\Gamma}$ such that $\delta_1 < \cdots < \delta_{k-1}$. 
	
	Furthermore, for all but finitely many $\gamma \in \Gamma$, we have that $v(\varphi(t)) = \minval_\varphi(v(t))$ for all $t \in K$ such that $v(t) = \gamma$. If the residue field of $V$ is infinite, then for any $\varphi_1, \dots, \varphi_n \in K(x)$ and any $\gamma \in \Gamma$, there exists $a \in K$ with $v(a) = \gamma$ such that $\minval_{\varphi_i}(\gamma) = v(\varphi_i(a))$ for all $i$. 
\end{lemma}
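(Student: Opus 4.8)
The plan is to prove the three assertions in order, reducing each to elementary facts about minima of affine functions and the strict ultrametric inequality. Start with a nonzero polynomial $f(x) = \sum_i a_i x^i \in K[x]$: by definition $\minval_f(\gamma) = \min\{\, v(a_i) + i\gamma : a_i \neq 0 \,\}$ is the pointwise minimum, over $\gamma \in \Q\Gamma$, of the finitely many $\Z$-affine functions $\ell_i(\gamma) = i\gamma + v(a_i)$. Two of these lines meet, if at all, only at the single point $(v(a_{i'}) - v(a_i))/(i - i') \in \Q\Gamma$, since $\Gamma$ is torsion-free; between consecutive crossing points no two lines change their relative order, so a single index realizes the minimum throughout such a segment, making $\minval_f$ affine there with integer slope and intercept in $\Gamma$. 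Sorting the finitely many crossing points as $\delta_1 < \cdots < \delta_{k-1}$ yields the stated normal form for $f$. For a nonzero $\varphi = f/g \in K(x)$, the function $\minval_\varphi = \minval_f - \minval_g$ is a difference of two such functions; passing to a common refinement of their two interval decompositions and subtracting piece by piece shows $\minval_\varphi$ is again piecewise linear, with breakpoints among the union of the crossing points of $f$ and $g$ (hence in $\Q\Gamma$), integer slopes (differences of integers), and intercepts in $\Gamma$ (differences of elements of $\Gamma$); merging adjacent pieces of equal slope gives exactly the displayed form. (Independence of the chosen representation $\varphi = f/g$ is not part of the statement, but follows from $\minval_{fg} = \minval_f + \minval_g$, itself a consequence of the next paragraph.)

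For the ``all but finitely many $\gamma$'' claim, fix $t \in K$ with $v(t) = \gamma \in \Gamma$. The ultrametric inequality gives $v(f(t)) \geq \min_i(v(a_i) + i\gamma) = \minval_f(\gamma)$, with equality whenever the minimum is attained by a \emph{unique} index, by the strict triangle inequality. The minimizer fails to be unique only when $\gamma$ is a crossing point of two of the $\ell_i$, and each unordered pair of exponents contributes at most one such $\gamma$ (again by torsion-freeness of $\Gamma$), so there are only finitely many exceptional $\gamma$. Removing the exceptional $\gamma$ for $f$ and for $g$, one gets $v(f(t)) = \minval_f(\gamma)$ and $v(g(t)) = \minval_g(\gamma)$ for all $t$ of valuation $\gamma$; in particular $g(t) \neq 0$, so $\varphi$ is defined at $t$ and $v(\varphi(t)) = v(f(t)) - v(g(t)) = \minval_\varphi(\gamma)$.

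The hard part is the last assertion, realizing the expected valuations of $\varphi_1, \dots, \varphi_n$ simultaneously at one point of prescribed valuation. Fix $\gamma \in \Gamma$, write $\varphi_i = f_i/g_i$ with $f_i(x) = \sum_j a_{ij} x^j$, and pick $t_0 \in K$ with $v(t_0) = \gamma$; I would search for $a$ of the shape $a = t_0 u$ with $u \in V$ a unit, so that $v(a) = \gamma$ automatically. Then $f_i(t_0 u) = \sum_j a_{ij} t_0^j u^j$ with $v(a_{ij} t_0^j u^j) = v(a_{ij}) + j\gamma$; setting $m_i = \minval_{f_i}(\gamma)$ and choosing $c_i \in K$ with $v(c_i) = m_i$, the reduction of $f_i(t_0 u)/c_i$ modulo $\m$ kills every term of valuation $> m_i$ and leaves unit multiples of the surviving monomials, so it equals $P_i(\bar u)$ for a fixed polynomial $P_i \in \kappa[X]$, where $\kappa = V/\m$; this $P_i$ is nonzero because distinct exponents give distinct monomials and at least one coefficient survives. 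The key point is that $P_i(\bar u) \neq 0$ forces $f_i(t_0 u)/c_i \in V^\times$, hence $v(f_i(a)) = m_i = \minval_{f_i}(\gamma)$. Forming the analogous nonzero $Q_i \in \kappa[X]$ from the $g_i$, the product $X \prod_{i=1}^n P_i(X) Q_i(X)$ is a nonzero element of $\kappa[X]$, hence has finitely many roots; since $\kappa$ is infinite there is $\bar u \in \kappa$ avoiding them all, i.e.\ $\bar u \neq 0$ and $P_i(\bar u), Q_i(\bar u) \neq 0$ for all $i$. Lifting $\bar u$ to a unit $u \in V$ and setting $a = t_0 u$ gives $v(\varphi_i(a)) = v(f_i(a)) - v(g_i(a)) = \minval_{f_i}(\gamma) - \minval_{g_i}(\gamma) = \minval_{\varphi_i}(\gamma)$ for every $i$. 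I expect the obstacle here to be bookkeeping rather than conceptual: normalizing by the correct element of valuation $m_i$, verifying that the mod-$\m$ reduction is a bona fide nonzero one-variable polynomial, and packaging the $2n+1$ genericity conditions into one nonvanishing condition over $\kappa$ — it is exactly the infinitude of $\kappa$ that makes the simultaneous choice possible, whereas the first two parts are soft.
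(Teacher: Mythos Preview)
The paper does not supply its own proof of this lemma: it is quoted verbatim from \cite{Liu} (Proposition~2.24, Lemma~2.26, Proposition~2.27), so there is no in-paper argument to compare against. Your proposal is correct and self-contained. The first two parts are exactly the standard Newton-polygon reasoning one expects: the minimum of finitely many affine functions with integer slopes is piecewise affine with breakpoints at the pairwise crossings, and the strict ultrametric inequality gives $v(f(t)) = \minval_f(v(t))$ whenever the minimizing index is unique, which fails only at those finitely many crossings.

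For the third part, your construction is in fact the local-polynomial method that the paper imports from \cite{Liu}: your polynomial $P_i \in \kappa[X]$ is, up to a nonzero scalar in $\kappa$, precisely $\loc_{f_i, v, t_0}$ as defined in the paper just after this lemma. The paper normalizes by the specific element $a_d t_0^d$ (with $d$ the largest minimizing exponent) to make the local polynomial monic, whereas you normalize by an arbitrary $c_i$ of valuation $m_i$; this is immaterial for the nonvanishing argument. Your packaging of the $2n$ nonvanishing conditions plus the unit condition on $u$ into the single nonzero polynomial $X\prod_i P_i Q_i$ over the infinite residue field is exactly how one expects the cited Proposition~2.27 to go. So your argument is not merely correct but aligned with the framework the paper is using; the only cosmetic difference is the choice of normalizing constant for the local polynomial.
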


The following result of Dobbs and Fontana shows how the common radical ideal helps describe what the localizations of a GPVD at maximal ideals look like. 

\begin{proposition}\cite[p. 156]{DobbsFontana}\label{Prop:GPVDEquivalentDefinition}
	Let $D$ be a GPVD and $T$ the Prüfer domain associated to $D$. Then by the equivalent definition of GPVD, there exists a common radical ideal $J$ such that $D/J \subseteq T/J$ is a unibranched extension of Krull dimension 0 rings. Let $\mathfrak{n}$ be a maximal ideal of $D$ and $\mathfrak{m}$ be the maximal ideal of $T$ contracting to $\mathfrak{n}$. Then
	\begin{itemize}
		\item if $J \not\subseteq \mathfrak{m}$, then $D_\mathfrak{n} = T_\mathfrak{m}$ is a valuation domain, and
		\item if $J \subseteq \mathfrak{m}$, then $D_\mathfrak{n}$ is a PVD with associated valuation domain $T_\mathfrak{m}$. 
	\end{itemize}
\end{proposition}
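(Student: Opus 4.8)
The plan is to localize the unibranched extension $D \subseteq T$ at $\mathfrak{n}$ (equivalently at $\mathfrak{m}$) and to read off the local structure from the conductor. Fix a common radical ideal $J$ so that $D/J \subseteq T/J$ is a unibranched extension of zero-dimensional rings. Two easy preliminary observations: since $J$ is an ideal of $T$ contained in $D$, one has $J \subseteq (D:T)$; and since $\mathfrak{m} \cap D = \mathfrak{n}$ gives $D \setminus \mathfrak{n} \subseteq T \setminus \mathfrak{m}$, working inside the common field of fractions yields $D_\mathfrak{n} \subseteq T_\mathfrak{m}$, with $T_\mathfrak{m}$ a valuation domain because $T$ is Prüfer. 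The preliminary fact I would want is that the contraction map $\Spec(T) \to \Spec(D)$ is a homeomorphism that localizes to a homeomorphism $\Spec(T_\mathfrak{m}) \to \Spec(D_\mathfrak{n})$; in particular $\mathfrak{m}$ is the unique prime of $T$ maximal among those contracting into $\mathfrak{n}$, so that $T_\mathfrak{m} = (D \setminus \mathfrak{n})^{-1}T$. I expect this to be the main obstacle, and it is where the GPVD hypothesis is used in an essential way, not just unibranchedness: off $V(J)$ the rings agree, since if $J \not\subseteq \mathfrak{p}$ for a prime $\mathfrak{p}$ of $D$ then some $j \in J$ is a unit of $D_\mathfrak{p}$ with $jT \subseteq D$, forcing $T \subseteq D_\mathfrak{p}$ and hence $D_\mathfrak{p} = T_{\mathfrak{p}'}$ (a local overring of a Prüfer domain is one of its valuation localizations); while on $V(J)$ the map $\Spec(T/J) \to \Spec(D/J)$ is a continuous bijection of compact Hausdorff spaces, hence a homeomorphism; and these glue.

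Granting the preliminary fact, I would first handle the case $J \not\subseteq \mathfrak{m}$, so that $J \not\subseteq \mathfrak{n}$ as well. Choosing $j \in J \setminus \mathfrak{n}$ makes $j$ a unit of $D_\mathfrak{n}$ with $jT \subseteq J \subseteq D$, so $T \subseteq j^{-1}D \subseteq D_\mathfrak{n}$. Then $D_\mathfrak{n}$ is a local overring of $T$, hence a valuation domain, hence equal to $T_\mathfrak{q}$ where $\mathfrak{q}$ is the contraction of its maximal ideal; since $\mathfrak{q} \cap D = \mathfrak{n}$ and the extension is unibranched, $\mathfrak{q} = \mathfrak{m}$, so $D_\mathfrak{n} = T_\mathfrak{m}$ is a valuation domain, as claimed. (Note this case needs only unibranchedness and basic Prüfer-overring theory, not the full preliminary fact.)

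Finally, suppose $J \subseteq \mathfrak{m}$, and hence $J \subseteq \mathfrak{n}$. Since $J$ is radical in $D$ and every prime of $D$ containing $J$ is maximal, the ideal $JD_\mathfrak{n}$ is radical in $D_\mathfrak{n}$ and is contained only in $\mathfrak{n}D_\mathfrak{n}$, so $JD_\mathfrak{n} = \mathfrak{n}D_\mathfrak{n}$; the same reasoning in $T$ gives $JT_\mathfrak{m} = \mathfrak{m}T_\mathfrak{m}$. Using $T_\mathfrak{m} = (D \setminus \mathfrak{n})^{-1}T$, every element of $JT_\mathfrak{m}$ can be written with numerator in $J \subseteq D$ and denominator in $D \setminus \mathfrak{n}$, so $JT_\mathfrak{m} \subseteq D_\mathfrak{n}$; combining, $\mathfrak{m}T_\mathfrak{m} = JT_\mathfrak{m} \subseteq D_\mathfrak{n}$, and in fact $\mathfrak{m}T_\mathfrak{m} = \mathfrak{n}D_\mathfrak{n}$. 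Consequently every prime of $T_\mathfrak{m}$, being contained in the maximal ideal $\mathfrak{m}T_\mathfrak{m} \subseteq D_\mathfrak{n}$, is itself a prime ideal of $D_\mathfrak{n}$; and by the localized bijection of spectra each prime of $D_\mathfrak{n}$ is the contraction of a prime of $T_\mathfrak{m}$, which then coincides with it. Hence $\Spec(D_\mathfrak{n}) = \Spec(T_\mathfrak{m})$ as sets, and since $T_\mathfrak{m}$ is a valuation overring of $D_\mathfrak{n}$, this is exactly the statement that $D_\mathfrak{n}$ is a PVD with associated valuation domain $T_\mathfrak{m}$. Everything here is routine once the homeomorphism/localization step of the first paragraph is in hand; that step is the crux.
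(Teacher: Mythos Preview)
The paper does not give its own proof of this proposition; it is simply quoted from Dobbs and Fontana. So there is no ``paper's proof'' to compare against, and the question is just whether your argument is sound.

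Your treatment of the case $J\not\subseteq\mathfrak{m}$ is correct and self-contained: from $j\in J\setminus\mathfrak{n}$ one gets $T\subseteq D_\mathfrak{n}$, so $D_\mathfrak{n}$ is a local overring of the Pr\"ufer domain $T$, hence a valuation ring $T_\mathfrak{q}$, and unibranchedness forces $\mathfrak{q}=\mathfrak{m}$. Likewise, once the ``preliminary fact'' $T_\mathfrak{m}=(D\setminus\mathfrak{n})^{-1}T$ is granted, your argument in the case $J\subseteq\mathfrak{m}$ is clean: $JD_\mathfrak{n}=\mathfrak{n}D_\mathfrak{n}$, $JT_\mathfrak{m}=\mathfrak{m}T_\mathfrak{m}$, the latter sits inside $D_\mathfrak{n}$, and then $\Spec(D_\mathfrak{n})=\Spec(T_\mathfrak{m})$ follows exactly as you wrote.

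The genuine gap is the one you yourself flag: the ``these glue'' step. Having homeomorphisms on the closed piece $V(J)$ and on the open complement does \emph{not} automatically give a homeomorphism on the whole spectrum, and in particular does not by itself yield the order-isomorphism (going-up) you need for $T_\mathfrak{m}=(D\setminus\mathfrak{n})^{-1}T$. A continuous bijection that is a homeomorphism on a closed set and on its open complement can still fail to be closed globally.

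Here is a direct fix that avoids topology altogether. Set $R=(D\setminus\mathfrak{n})^{-1}T$, an overring of $T$ containing $D_\mathfrak{n}$. Since $J$ is a $T$-ideal contained in $D$, one checks $JR=JD_\mathfrak{n}=\mathfrak{n}D_\mathfrak{n}$. If $\mathfrak{Q}\in\MaxSpec(R)$ did not contain $JR$, write $1=j'+q$ with $j'\in JR=\mathfrak{n}D_\mathfrak{n}$ and $q\in\mathfrak{Q}$; then $q=1-j'$ is a unit of $D_\mathfrak{n}\subseteq R$, contradicting $q\in\mathfrak{Q}$. Hence every maximal ideal of $R$ contains $J$, so its contraction $\mathfrak{q}$ to $T$ contains $J$, is therefore maximal in $T$, has $\mathfrak{q}\cap D\subseteq\mathfrak{n}$ containing $J$, hence equals $\mathfrak{n}$, and unibranchedness gives $\mathfrak{q}=\mathfrak{m}$. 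Thus $R$ is local with maximal ideal $\mathfrak{m}R$, so $R=T_\mathfrak{m}$. With this in hand your second case goes through verbatim.
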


Even when $D$ is a PVD that is not a valuation domain, we can define the prime ideal $\M^*$ of $\IntR(D)$ when the associated valuation domain has infinite residue field or maximal ideal that is not principal. This is defined analogously to the $\M^*$ prime ideal defined for $\IntR(V)$, where $V$ is a valuation domain with infinite residue field or maximal ideal that is not principal in \cite{IntValuedRational}. We define
\[
\M^* \coloneqq \{\varphi \in \IntR(D) \mid \minval_\varphi(0) > 0 \},
\]
where $\minval$ is defined using the valuation associated with the associated valuation domain. We check this is indeed a prime ideal of $\IntR(D)$. We will eventually only need this following result in the case where the maximal ideal of the associated valuation domain is not principal. 

\begin{lemma}
	Let $D$ be a PVD. Suppose $D$ has infinite residue field or the maximal ideal of the associated valuation domain is not principal. Then $\M^*$ is a prime ideal of $\IntR(D)$. 
\end{lemma}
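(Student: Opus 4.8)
My plan is to reduce the statement to the single claim
\[
(\ast)\qquad \minval_\psi(0)\ge 0 \quad\text{for every } \psi\in\IntR(D)
\]
and then prove $(\ast)$ using Lemma~\ref{Lem:MinvalForm}. Write $V$ for the associated valuation domain of $D$, with maximal ideal $\m$ (which $D$ and $V$ share), valuation $v$, and value group $\Gamma$. First, for $\varphi=f/g$ with $f,g\in K[x]$, the quantity $\minval_\varphi(0)=\minval_f(0)-\minval_g(0)$ does not depend on the representation, since $\minval_{f_1f_2}(0)=\minval_{f_1}(0)+\minval_{f_2}(0)$ (a form of Gauss's lemma over a valuation domain); so $\M^*$ is a well-defined subset of the ring $\IntR(D)$. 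Granting $(\ast)$, the lemma is formal: $\minval$ is subadditive, $\minval_{\varphi_1+\varphi_2}(0)\ge\min\{\minval_{\varphi_1}(0),\minval_{\varphi_2}(0)\}$, so $\M^*$ is an additive subgroup; for $\varphi\in\M^*$ and $\psi\in\IntR(D)$ we get $\varphi\psi\in\IntR(D)$ and $\minval_{\varphi\psi}(0)=\minval_\varphi(0)+\minval_\psi(0)\ge\minval_\varphi(0)>0$, so $\M^*$ is an ideal, proper since $\minval_1(0)=0$; and if $\varphi\psi\in\M^*$ with $\varphi,\psi\in\IntR(D)$ then $\minval_\varphi(0)+\minval_\psi(0)>0$ with both summands $\ge0$ by $(\ast)$, forcing $\varphi\in\M^*$ or $\psi\in\M^*$, so $\M^*$ is prime. (Equivalently: $(\ast)$ says $\IntR(D)\subseteq R:=\{\varphi\in K(x):\minval_\varphi(0)\ge0\}$, reduction modulo $\m$ is a ring homomorphism $R\to(V/\m)(x)$ with kernel $\{\minval_\varphi(0)>0\}$, and $\M^*$ is the contraction of that kernel.)

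To prove $(\ast)$ I would argue by contradiction. Suppose $\psi\in\IntR(D)$ with $\minval_\psi(0)=-\delta<0$. By Lemma~\ref{Lem:MinvalForm}, $\minval_\psi$ is piecewise linear with finitely many pieces, hence continuous, so there is $\delta'\in\Q\Gamma$, $\delta'>0$, with $\minval_\psi(\gamma)<0$ for all $0\le\gamma<\delta'$. Assume first that $\m$ is not principal. Then $\Gamma_{>0}$ has no least element, so $\Gamma\cap(0,\delta')$ is infinite, and I can choose $\gamma$ in it outside the finite exceptional set of Lemma~\ref{Lem:MinvalForm}. For any $a\in K$ with $v(a)=\gamma$ we then have $v(\psi(a))=\minval_\psi(\gamma)<0$; but $v(a)>0$ forces $a\in\m\subseteq D$, so $\psi(a)\in D\subseteq V$ and $v(\psi(a))\ge0$, a contradiction.

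Assume instead that the residue field $D/\m$ is infinite. Rescaling numerator and denominator, write $\psi=c\,f/g$ with $f,g\in V[x]$ primitive (a coefficient of each being a unit) and $v(c)=\minval_\psi(0)<0$. Then $\overline f\,\overline g\in(V/\m)[x]$ is a nonzero polynomial, so has only finitely many roots in $V/\m$; since $D/\m$ is infinite I can choose $\bar\alpha\in D/\m$ that is not one of them and lift it to $a\in D$ with $v(a)=0$. Then $v(f(a))=v(g(a))=0$, so $v(\psi(a))=v(c)<0$, again contradicting $\psi(a)\in D\subseteq V$. (If ``infinite residue field'' is meant as a condition on $V/\m$ with $D/\m$ possibly finite, the extra case $D/\m$ finite and $\m$ principal needs the refinement of expanding $a=\hat\alpha+\pi e$ to first order to see that the lowest-valuation coefficient of $\psi$ forces the residue of $\psi(a)$ to range over all of $V/\m$; in any event the case used later in the paper is the non-principal one.) This establishes $(\ast)$, and the lemma follows.

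The step I expect to demand the most care is $(\ast)$. Intuitively it says one cannot manufacture an integer-valued rational function whose coefficients have unboundedly negative valuation, and the real content is producing an evaluation point that both realizes the value predicted by $\minval_\psi$ \emph{and} lies in $D$ rather than merely in $V$ --- which is precisely where the hypothesis (non-principality of $\m$, via the piecewise-linear structure of Lemma~\ref{Lem:MinvalForm}, or an infinite residue field) becomes indispensable.
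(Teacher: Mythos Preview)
Your proof is correct and follows essentially the same approach as the paper: both use Lemma~\ref{Lem:MinvalForm} under the two hypotheses to produce evaluation points in $D$ that realize $\minval$, and then the ideal and primality assertions follow from multiplicativity. Your organization is slightly cleaner---you isolate $(\ast)$ first and deduce additive closure from the general algebraic subadditivity of $\minval$, whereas the paper establishes the inequality $\minval_{\varphi+\psi}(0)\ge\min\{\minval_\varphi(0),\minval_\psi(0)\}$ by evaluation in each case separately and only afterward remarks that ``similar techniques'' give $(\ast)$---but the substance is the same (and your worry about $V/\m$ infinite with $D/\m$ finite is unnecessary, since the hypothesis is on $D$'s residue field).
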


\begin{proof}
	We will let $V$ denote the valuation domain associated to $D$. Also let $v$ be an associated valuation, $\m$ be the maximal ideal of $V$, and $\Gamma$ the value group. 
	
	Let $\varphi, \psi \in \M^*$. We want to show $\varphi + \psi \in \M^*$. We consider the case where $D$ has infinite residue field and the case where $\m$ is not principal in $V$ separately.
	
	Suppose that $D$ has infinite residue field. Then there exists some $u \in D$ such that $v(\varphi(u)) = \minval_\varphi(0), v(\psi(u)) = \minval_\psi(0),$ and $v((\varphi+\psi)(u)) = \minval_{\varphi+\psi}(0)$ by Lemma \ref{Lem:MinvalForm}. Then we have
	\begin{align*}
		\minval_{\varphi+\psi}(0) = v((\varphi+\psi)(u)) & \geq \min\{v(\varphi(u)), v(\psi(u))\} \\&= \min\{\minval_\varphi(0), \minval_\psi(0) \} \\&> 0. 
	\end{align*}
	Therefore, $\varphi+\psi \in \M^*$. 
	
	If $\m$ is not principal in $V$, then there exists $\varepsilon \in \Gamma$ with $\varepsilon > 0$ such that for all $d \in D$ such that $0 < v(d) < \varepsilon$, we have $
	\minval_\varphi(v(d)) = v(\varphi(d)), \minval_\psi(v(d)) = v(\varphi(d)),$ and $\minval_{\varphi+\psi}(v(d)) = v((\varphi+\psi)(d))$ by Lemma \ref{Lem:MinvalForm}. This implies that
	\[
	\minval_{\varphi+\psi}(\gamma) \geq \min\{\minval_\varphi(\gamma), \minval_\psi(\gamma) \},
	\]
	for all $\gamma \in \Gamma$ such that $0 < \gamma < \varepsilon$. Thus, the above inequality also holds for $\gamma = 0$ by Lemma \ref{Lem:MinvalForm}, which leads to $\minval_{\varphi+\psi}(0) \geq \min\{\minval_\varphi(0), \minval_\psi(0) \} > 0$. Thus, $\varphi+\psi \in \M^*$. 
	
	Now let $\rho \in \IntR(D)$ and $\varphi \in \M^*$. We can use similar techniques as above to show that $\minval_\rho(0) \geq 0$. Then $\minval_{\rho\varphi}(0) = \minval_{\rho}(0) + \minval_\varphi(0) > 0$. This shows that $\rho\varphi \in \M^*$, so $\M^*$ is an ideal of $\IntR(D)$. 
	
	Now suppose that $\rho, \rho'\in \IntR(D)$ such that $\rho\rho'\in \M^*$. We see that \[\minval_{\rho\rho'}(0) = \minval_{\rho}(0) + \minval_{\rho'}(0) > 0,\] which implies that $\minval_{\rho}(0) > 0$ or $\minval_{\rho'}(0) > 0$ since $\minval_{\rho}(0) \geq 0$ and $\minval_{\rho'}(0) \geq 0$. Thus, $\M^*$ is a prime ideal of $\IntR(D)$. 
\end{proof}

Now we focus on the case where the maximal ideal of the associated valuation domain is not principal. The following lemma is then analogous to Theorem 6.6 of \cite{IntValuedRational}. 

\begin{lemma}\label{Lem:MStarNotMaximal}
	Let $D$ be a PVD with associated valuation domain $V$ whose maximal ideal is not principal. Then the prime ideal $\M^*$ is not maximal. 
\end{lemma}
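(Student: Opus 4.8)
The plan is to exhibit a prime ideal of $\IntR(D)$ that strictly contains $\M^*$ and is itself proper. The natural candidate is a maximal pointed ideal: fix $a \in D$ with $v(a) > 0$ (possible since $\m \neq 0$, as $V$ is not a field — its maximal ideal is not principal, hence nonzero), and consider $\M_{\m_D, a} = \{\varphi \in \IntR(D) \mid \varphi(a) \in \m_D\}$, where $\m_D$ is the common maximal ideal of $D$ and $V$. This is a maximal ideal of $\IntR(D)$ by the discussion of maximal pointed ideals. So it suffices to show two things: first, that $\M^* \subseteq \M_{\m_D, a}$, and second, that the containment is strict, i.e. $\M^* \neq \M_{\m_D, a}$; since $\M_{\m_D,a}$ is maximal and $\M^*$ is prime, strict containment forces $\M^*$ to be non-maximal.

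For the inclusion $\M^* \subseteq \M_{\m_D, a}$: take $\varphi \in \M^*$, so $\minval_\varphi(0) > 0$. I want to conclude $v(\varphi(a)) > 0$, i.e. $\varphi(a) \in \m$. The idea is that since $\m$ is not principal, the value $v(a)$ can be taken arbitrarily small relative to the "breakpoints" $\delta_i$ appearing in the piecewise-linear description of $\minval_\varphi$ from Lemma \ref{Lem:MinvalForm} — but $a$ is a single fixed element, so instead the correct move is: by Lemma \ref{Lem:MinvalForm}, on the initial linear piece near $0$ (for $0 \le \gamma < \delta_1$, or for all small enough $\gamma$), $\minval_\varphi$ agrees with $c\gamma + \beta$ where $\beta = \minval_\varphi(0) > 0$; and since $\m$ is not principal one can choose $a$ with $0 < v(a)$ small enough that $v(\varphi(a)) = \minval_\varphi(v(a))$ and $v(a)$ lies in that initial piece, giving $v(\varphi(a)) = c\,v(a) + \beta$. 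The subtlety is that $c$ could be negative and $v(a)$ not small enough to keep $c\,v(a)+\beta > 0$; this is where non-principality of $\m$ is essential — it lets us shrink $v(a)$ below any positive element of $\Gamma$ (in particular below $\beta/|c|$ in the divisible closure, appropriately interpreted). So the honest statement is: we choose the fixed element $a$ at the outset to have $v(a)$ small enough to simultaneously handle things; but since $\varphi$ ranges over all of $\M^*$, $a$ cannot depend on $\varphi$. The resolution is to note $\minval_\varphi(0) > 0$ together with $\minval_\varphi$ continuous and $\minval_\varphi(\gamma) \to \minval_\varphi(0)$ as $\gamma \to 0^+$ is not quite enough uniformly — so the cleanest route is to instead pick $a$ with $v(a) > 0$ and recall that $\M^*$ is contained in \emph{every} $\M_{\m_D,a}$ with $v(a)>0$ is \emph{false} in general; rather, I should define $\M^* = \bigcap$ over a cofinal family and argue accordingly, or simply argue that $\M^*$ is contained in \emph{some} maximal ideal properly. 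Let me instead take the direct approach below.

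The cleanest plan: show $\M^*$ is contained in a maximal ideal $\M_{\m_D,a}$ for a suitably chosen fixed $a$, and that $\M^* \subsetneq \M_{\m_D,a}$. For the strictness, observe that $\M_{\m_D,a}$ contains the linear polynomial $x - a$ (if $v(a) > 0$, then $(x-a)$ evaluated at $a$ is $0 \in \m_D$), but $x - a \notin \M^*$ because $\minval_{x-a}(0) = \min\{v(-a), v(1) + 0\} = \min\{v(a), 0\} = 0$, not positive. Hence $x - a \in \M_{\m_D,a} \setminus \M^*$, giving strict containment once the inclusion is established. For the inclusion, I will choose $a$ as follows: it suffices to find a \emph{single} $a$ with $0 < v(a)$ such that for every $\varphi \in \M^*$ we get $v(\varphi(a)) > 0$. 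Using non-principality of $\m$, pick any $a$ with $0 < v(a)$; then for $\varphi \in \M^*$, by Lemma \ref{Lem:MinvalForm} the function $\minval_\varphi$ is linear with slope $c_1$ and intercept $\minval_\varphi(0)$ on $(0, \delta_1)$ — but $\delta_1$ and $c_1$ depend on $\varphi$. So the fully honest fix is to argue instead that $\M^*$ is \emph{not} maximal by producing a prime strictly between $\M^*$ and $\IntR(D)$ \emph{or} strictly containing $\M^*$: take $\P$ to be a maximal ideal of $\IntR(D)$ containing $\M^*$ (one exists since $\M^* \neq \IntR(D)$, as $1 \notin \M^*$), and it remains to check $\P \neq \M^*$. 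If $\P = \M^*$ then $\M^*$ would be maximal; to rule this out, exhibit $\varphi \in \IntR(D)$ with $\varphi \notin \M^*$ but $\varphi$ a non-unit in $\IntR(D)$ localized appropriately — concretely, any $\varphi$ with $\minval_\varphi(0) = 0$ but $\minval_\varphi(\gamma) > 0$ for $\gamma$ in some interval $(0,\epsilon)$, which exists precisely because $\m$ is not principal; such $\varphi$ lies outside $\M^*$ yet cannot be a unit of $\IntR(D)$ since a unit $u$ would have $\minval_u$ and $\minval_{u^{-1}}$ both identically-signed forcing $\minval_u \equiv 0$. The main obstacle is making the interplay between "fixed $a$" and "all $\varphi \in \M^*$" rigorous; I expect the resolution mirrors the proof of Theorem 6.6 in \cite{IntValuedRational}, replacing $V$-statements by their $D$-counterparts via Proposition \ref{Prop:GPVDEquivalentDefinition}, and the bulk of the work is verifying the chosen $\varphi$ (or $a$) behaves as claimed using the piecewise-linear structure from Lemma \ref{Lem:MinvalForm}.
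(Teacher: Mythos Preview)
Your proposal does not complete the proof; both routes you outline have gaps that you correctly flag but do not close.

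For the fixed-$a$ approach, the inclusion $\M^*\subseteq\M_{\m,a}$ would require that every $\varphi\in\M^*$ satisfy $v(\varphi(a))>0$. As you note, the piecewise-linear data of $\minval_\varphi$ (the first breakpoint $\delta_1$ and the initial slope) depend on $\varphi$, so no single choice of $v(a)$ is small enough uniformly in $\varphi$. Your fallback---exhibiting a non-unit $\varphi\notin\M^*$---does not rule out maximality either: if $\M^*$ were maximal, elements outside it would be invertible \emph{modulo} $\M^*$, not in $\IntR(D)$ itself, so a non-unit outside $\M^*$ merely lies in some other maximal ideal and says nothing about $\M^*$.

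The paper resolves the uniformity problem by replacing a single pointed ideal with an ultrafilter limit. Because $\m$ is not principal, the family $\bigl\{\{\M_{\m,a}: a\in\m,\ v(a)<\varepsilon\}:\varepsilon\in\Gamma,\ \varepsilon>0\bigr\}$ has the finite-intersection property and extends to an ultrafilter $\mathcal U$ on $\{\M_{\m,a}: a\in D\}$. For each $\varphi\in\M^*$, Lemma~\ref{Lem:MinvalForm} gives a $\varphi$-dependent $\varepsilon>0$ such that $v(\varphi(d))=\minval_\varphi(v(d))=c\,v(d)+\beta>0$ for all $d$ with $0<v(d)<\varepsilon$; hence $\{\M_{\m,a}:0<v(a)<\varepsilon\}\in\mathcal U$ lies in the characteristic set of $\varphi$, so $\varphi\in\lim_{\mathcal U}\M_{\m,a}$. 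The containment $\M^*\subseteq\lim_{\mathcal U}\M_{\m,a}$ is strict since $x$ lies in the limit (each relevant $a$ has $v(a)>0$) while $\minval_x(0)=0$. The ultrafilter is exactly the device that absorbs the $\varphi$-dependence of $\varepsilon$ which blocks your fixed-$a$ argument; this is indeed the analogue of \cite[Theorem~6.6]{IntValuedRational} you anticipated.
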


\begin{proof}
	Denote by $v$ a valuation associated with $V$ and $\Gamma$ its value group. Let $\mathcal{U}$ be a non-principal ultrafilter of $\{\M_{ \m, a} \mid a \in D \}$ containing the family of sets of ideals $\{ \{ \M_{ \m, a} \mid a \in \m, v(a) < \varepsilon \}\mid \epsilon \in \Gamma, \epsilon > 0 \}$. We claim that $\M^* \subsetneq \lim\limits_{\mathcal{U}} \M_{\m, a}$. 
	
	Let $\varphi \in \M^*$. Then there exist $\varepsilon\in \Gamma$ with $\varepsilon > 0$, $c \in \Z$, and $\beta \in \Gamma$ such that $\minval_\varphi(\gamma) = c\gamma + \beta$ for all $\gamma$ with $0 < \gamma < \varepsilon$ and for all $d \in D$ such that $0 < v(d) < \varepsilon$, we have
	\[
	v(\varphi(d)) = \minval_\varphi(v(d)). 
	\]
	We can make $\varepsilon$ small enough so that $c\gamma + \beta > 0$ for all $\gamma$ such that $0 < \gamma < \varepsilon$ since $\minval_\varphi(0) = \beta > 0$. This shows that $\varphi \in \lim\limits_{\mathcal{U}} \M_{\m, a}$. 
	
	The containment $\M^* \subseteq \lim\limits_{\mathcal{U}} \M_{\m, a}$ is strict since $x \in \lim\limits_{\mathcal{U}} \M_{\m, a} \setminus \M^*$. 
\end{proof}

Now we show that for a PVD $D$ that is not a valuation domain and is not pseudosingular, then $\IntR(D)$ is not a GPVD. 

\begin{proposition}\label{Prop:NotPseudosingularPVD}
	Let $D$ be a PVD with associated valuation domain $V$ whose maximal ideal is not principal and $D \neq V$. Then the domain $\IntR(D)$ is not a GPVD.
\end{proposition}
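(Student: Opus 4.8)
The plan is to show that $\IntR(D)$ fails to be a GPVD by exploiting the relationship between $\IntR(D)$ and $\IntR(V)$ and the behavior of the prime ideal $\M^*$. Since $D$ is a PVD that is not a valuation domain, the associated valuation domain $V$ is a proper overring of $D$ with the same spectrum, and (as $V$ has non-principal maximal ideal) $\IntR(V)$ is not a Prüfer domain — this is the valuation-domain analogue of the present $\M^*$ story, worked out in \cite{IntValuedRational}. The strategy is: if $\IntR(D)$ were a GPVD, it would be an LPVD (and in particular locally a PVD at each maximal ideal), and I would derive a contradiction from the structure of the prime ideals lying over $\M^*$, using Lemma \ref{Lem:MStarNotMaximal} to know $\M^*$ is not maximal.

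First I would pin down $\IntR(V)$ as (a candidate for) the associated Prüfer domain. If $\IntR(D)$ is a GPVD with associated Prüfer domain $\mathcal{T}$, then $\IntR(D) \subseteq \mathcal{T}$ is unibranched, so the prime spectra are in bijection; I want to identify $\mathcal{T}$ with $\IntR(V)$ and show $\IntR(V)$ is not Prüfer, contradicting that $\mathcal{T}$ must be Prüfer. Concretely: $D \subseteq V$ gives $\IntR(D) \supseteq \IntR(V)$? No — one must be careful with the direction, since a larger base ring gives a larger ring of integer-valued functions in one coordinate but a smaller constraint; in fact $\IntR(V) \supseteq \IntR(D)$ as subsets of $K(x)$ is false in general, so the cleaner route is to work with $\IntR(V,D)$-type intermediate rings or, better, to argue entirely inside $\IntR(D)$. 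So the core of the argument should be: localize $\IntR(D)$ at a maximal ideal $\M$ containing $\M^*$ (such $\M$ exists by Lemma \ref{Lem:MStarNotMaximal}, and there are at least two distinct primes between $\M^*$ and $\M$ or the localization $\IntR(D)_\M$ has a prime that is not comparable appropriately), and show $\IntR(D)_\M$ is not a PVD. A PVD has a \emph{linearly ordered} prime spectrum; so it suffices to produce, inside the localization at a single maximal ideal containing $\M^*$, two incomparable primes, or to show the chain below $\M$ cannot embed in the spectrum of a valuation domain.

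The concrete mechanism I would use: by Lemma \ref{Lem:MStarNotMaximal} there is a maximal ideal $\M = \lim_{\mathcal U}\M_{\m,a} \supsetneq \M^*$. In $\IntR(D)_\M$, the primes contained in $\M\IntR(D)_\M$ correspond to primes of $\IntR(D)$ inside $\M$. I would exhibit two such primes that are incomparable — e.g. $\M^*$ together with a second ultrafilter-limit prime $\lim_{\mathcal U'}\M_{\m,a}$ for a different ultrafilter $\mathcal U'$ that still sits inside $\M$ but does not contain $\M^*$ (or vice versa). The existence of enough ultrafilters follows from the non-principality of $\m$: the family $\{\{\M_{\m,a}\mid a\in\m,\ v(a)<\varepsilon\}\mid \varepsilon>0\}$ has the finite intersection property and there are many incomparable ways to extend it, using the piecewise-linearity of $\minval$ (Lemma \ref{Lem:MinvalForm}) to separate rational functions whose $\minval$ has different slopes near $0$. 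Two such primes being incomparable contradicts $\IntR(D)_\M$ being a PVD, hence $\IntR(D)$ is not an LPVD, hence not a GPVD.

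The main obstacle I anticipate is the incomparability claim: showing that below the maximal ideal $\M$ one genuinely finds two primes neither of which contains the other, rather than a single chain. This requires a careful choice of two rational functions $\varphi_1,\varphi_2$ whose membership in the various $\M_{\m,a}$ is governed by the sign of $\minval_{\varphi_i}$ on an interval $(0,\varepsilon)$, arranged so that $\varphi_1$ lies in one prime but not the other and $\varphi_2$ does the reverse — the natural candidates being $\varphi_1 = x$ (always in $\M^*$-adjacent limits with positive slope near $0$) against something like $\varphi_2 = \frac{1}{1+x}$ or a function built from $t$-like elements of $V\setminus D$, together with bookkeeping that both chosen primes are contained in a \emph{common} maximal ideal. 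Managing this while staying inside $\IntR(D)$ (as opposed to $\IntR(V)$, where the analogous $\M^*$ result already lives) is where the PVD-versus-valuation-domain subtlety has to be handled, using Proposition \ref{Prop:GPVDEquivalentDefinition} to transfer local structure between $D_\m$ and $V$.
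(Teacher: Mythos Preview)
Your proposal has a genuine gap, and more importantly it misses the much simpler route the paper takes.

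The paper does \emph{not} try to show that $\IntR(D)$ fails to be an LPVD. Instead it directly contradicts the GPVD axiom on the common radical ideal $J$. The argument is short: for each $a\in D$, the localization $\IntR(D)_{\M_{\m,a}}$ is not a valuation domain, because intersecting with $K$ lands inside $D$, and since $D\neq V$ one can pick $d\in V\setminus D$; then $d$ and $d^{-1}$ are both units of $V$ not in $D$, so neither lies in $\IntR(D)_{\M_{\m,a}}$. By Proposition~\ref{Prop:GPVDEquivalentDefinition}, this forces $J\subseteq \M_{\m,a}$ for every $a\in D$, hence $J\subseteq \bigcap_a \M_{\m,a}=\IntR(D,\m)\subseteq \M^*$. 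But Lemma~\ref{Lem:MStarNotMaximal} says $\M^*$ is not maximal, so $J$ sits inside a non-maximal prime, violating the GPVD definition. That is the entire proof.

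By contrast, your plan leaves the key step---the incomparability of two primes below a single maximal ideal $\M$---as an acknowledged obstacle, and it is not clear it can be carried out. You would need to produce \emph{two} primes strictly contained in the \emph{same} maximal ideal, neither containing the other; the ultrafilter limits you describe are typically themselves maximal (they are built from maximal pointed ideals), so you would need primes of a different kind, and $\M^*$ is the only one in sight. Moreover, showing that two distinct ultrafilter-limit primes both sit inside a common maximal ideal, and that one of them fails to contain $\M^*$, is not addressed. Finally, note that the hypothesis $D\neq V$ does real work in the paper's proof (it furnishes the element $d$ with $d,d^{-1}\notin D$); in your sketch it is not clear where this hypothesis enters. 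I would recommend abandoning the LPVD route and instead arguing with the ideal $J$ as above.
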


\begin{proof}
	Let $\m$ denote the maximal ideal of $D$. Also let $a \in D$. We first show that $\IntR(D)_{\M_{\m, a}}$ is not a valuation domain. We have that 
	\[
	\IntR(D)_{\M_{\m, a}} \subseteq \{\varphi \in K(x) \mid \varphi(a) \in D \}. 
	\]
	Intersecting with $K$ yields that $\IntR(D)_{\M_{\m, a}} \cap K \subseteq D$. Take $d \in V \setminus D$. Then $d, d^{-1} \in V \setminus D$, so neither $d$ nor $d^{-1}$ is in $\IntR(D)_{\M_{\m, a}}$. Thus, $\IntR(D)_{\M_{\m, a}}$ is not a valuation domain. 
	
	Now suppose for a contradiction that $\IntR(D)$ is a GPVD with associated Prüfer domain $T$ and common radical ideal $J$ such that $\IntR(D)/J \subseteq T/J$ is a unibranched extension of Krull dimension 0 rings. By Proposition \ref{Prop:GPVDEquivalentDefinition}, we see that $J \subseteq \M_{\m, a}$ for all $a \in D$, since $\IntR(D)_{\M_{\m, a}}$ is not a valuation domain. Since $J \subseteq \M_{\m, a}$ for all $a \in D$, we have $J \subseteq \IntR(D,\m)$. Together with $\IntR(D,\m) \subseteq \M^*$, we get $J \subseteq \M^*$, but by Lemma \ref{Lem:MStarNotMaximal}, we have a prime ideal of $\IntR(D)$ containing $J$ that is not maximal, contradicting the assumption that $\IntR(D)$ is a GPVD.
	
\end{proof} 

The previous proposition required the PVD to be not a valuation domain. If the base ring $V$ is a valuation domain, we know exactly when $\IntR(V)$ is a Prüfer domain. 

\begin{theorem}\label{Thm:IntRVClassification}\cite[Corollary 2.30]{Liu}
	Let $V$ be a valuation domain with maximal ideal $\m$. Then $\IntR(V)$ is a Prüfer domain if and only if $V/\m$ is not algebraically closed or $\m$ is a principal ideal of $V$.
\end{theorem}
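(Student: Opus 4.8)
The plan is to split the biconditional into three regimes and dispose of two of them quickly. If $\m$ is a nonzero principal ideal, say $\m=tV$, then $V$ is a singular Prüfer domain: take $\Lambda=\{\m\}$, let $t_\m=t$, and observe that $0<v(t)<2v(t)$, so the three conditions in the definition of a singular Prüfer domain hold with this $t$ and $n=2$; hence $\IntR(V)=\IntR(V,V)$ is Prüfer by \cite[Theorem 3.5]{IntValuedRational}. If $\m=0$, i.e.\ $V=K$ is a field, then $\IntR(K)$ is the localization of the principal ideal domain $K[x]$ at the multiplicative set of polynomials with no root in $K$, hence a PID, so $\IntR(V)$ is Prüfer, in agreement with the claim. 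It therefore remains to treat the case where $\m$ is nonzero and non-principal, for which the assertion is that $\IntR(V)$ is Prüfer if and only if $V/\m$ is not algebraically closed.

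For the affirmative half I would use the criterion that $\IntR(V)$ is Prüfer iff every two-generated ideal $(\varphi_1,\varphi_2)$ is invertible, and reduce this — exactly as $\rho=\varphi_1+\theta(\varphi_1/\varphi_2)\varphi_2$ is used in Claim 6 of the proof of Theorem \ref{Thm:Pseudosingular} — to producing, for all nonzero $\varphi_1,\varphi_2\in\IntR(V)$, an auxiliary rational function $\theta\in\IntR(K,V)$ with no pole in $V$ and with $v(\theta(a))>0$ precisely when $v(a)=0$; substituting $\varphi_1/\varphi_2$ into such a $\theta$ yields $\rho\in(\varphi_1,\varphi_2)$ with $v(\rho(a))=\min\{v(\varphi_1(a)),v(\varphi_2(a))\}$ for every $a\in V$, whence $(\varphi_1,\varphi_2)=(\rho)$. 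In the principal case $\theta$ was assembled from the uniformizer; here it is the non-algebraic-closedness of $V/\m$ that supplies the substitute. Concretely, a monic irreducible $\bar h\in(V/\m)[y]$ of degree $m\ge 2$ lets one build polynomials over $V$ whose Newton polygon consists of a single segment with residual polynomial $\bar h$, hence with no root in $V$, and one assembles $\theta$ from such polynomials together with a nonzero element of $\m$. I expect the technically heaviest step of the whole proof to be verifying, via the piecewise-linear form of $\minval$ in Lemma \ref{Lem:MinvalForm}, that this $\theta$ simultaneously lies in $\IntR(K,V)$ and has the required valuation profile, since it must realize a prescribed $\minval$ shape while avoiding every pole in $V$ — a far more delicate balancing act than in the principal case.

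For the converse, assume $V/\m$ is algebraically closed with $\m$ nonzero and non-principal; I would show $\IntR(V)$ is not Prüfer by exhibiting a two-generated ideal that is not locally principal, the natural test case being $(x,s)$ for a fixed nonzero $s\in\m$ examined at the maximal ideal $\M_{\m,0}=\{\varphi\in\IntR(V)\mid\varphi(0)\in\m\}$. Since every element of $\IntR(V)_{\M_{\m,0}}$ is regular at $0$ with value there in $V$, the element $s/x$ is excluded by its pole at $0$, so local principality would force $x/s\in\IntR(V)_{\M_{\m,0}}$, i.e.\ some $\sigma\in\IntR(V)$ with $\sigma(0)\notin\m$ and $\sigma x/s\in\IntR(V)$; analyzing this with $\minval$ (again Lemma \ref{Lem:MinvalForm}) shows that such a $\sigma$ would have to be built from data that does not exist once $V/\m$ is algebraically closed — the residual polynomials that in the non-algebraically-closed case certify local principality all split here — while non-principality of $\m$ is what provides the elements of arbitrarily small positive value at which the obstruction is detected. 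Making this last point fully rigorous, and thereby deducing the failure of local principality of $(x,s)$ at $\M_{\m,0}$ and hence the failure of the Prüfer property, is the main subtlety of the converse.
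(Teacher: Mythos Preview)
The paper does not prove this theorem; it is quoted verbatim as \cite[Corollary 2.30]{Liu} and used as a black box. There is therefore no in-paper proof to compare against, and the theorem sits in the section precisely to separate the case $D=V$ (handled by this citation together with Corollary \ref{Cor:IntRNotPruferNotGPVD}) from the case $D\neq V$ (handled by Proposition \ref{Prop:NotPseudosingularPVD}).

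Your outline is broadly the right shape and matches the architecture of \cite{Liu} and \cite{IntValuedRational}: the principal-$\m$ case via singularity and \cite[Theorem 3.5]{IntValuedRational}, the field case trivially, and in the non-principal case a $\theta$-type construction for the ``not algebraically closed'' direction. Two caveats. First, in the affirmative half you assert that producing a single $\theta\in\IntR(K,V)$ with the stated valuation profile yields $(\varphi_1,\varphi_2)=(\rho)$; you should be explicit that the equality $v(\rho(a))=\min\{v(\varphi_1(a)),v(\varphi_2(a))\}$ really does give $\varphi_i/\rho\in\IntR(V)$ (so this even shows B\'ezout, not merely Pr\"ufer), and that the construction of $\theta$ from a monic irreducible $\bar h$ of degree $\ge 2$ is exactly the content of \cite[Theorem 3.2]{IntValuedRational} --- it would be cleaner to cite that directly rather than re-derive it. Second, and more seriously, your converse is only a heuristic: the phrase ``such a $\sigma$ would have to be built from data that does not exist once $V/\m$ is algebraically closed'' hides the entire argument. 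The actual obstruction is that with $V/\m$ algebraically closed (hence infinite) one has, for every $\gamma\in\Gamma$, some $a$ of value $\gamma$ with $v(\sigma(a))=\minval_\sigma(\gamma)$ (Lemma \ref{Lem:MinvalForm}), so the piecewise-linear shape of $\minval_\sigma$ near $0$, together with non-principality of $\m$ supplying arbitrarily small positive $\gamma$, forces $\minval_\sigma(\gamma)<v(s)-\gamma$ for some such $\gamma$, contradicting $\sigma x/s\in\IntR(V)$. You should write this out; as it stands, your step 4 is a statement of what needs to be proved rather than a proof.
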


Since a Prüfer domain is a GPVD, we know that if a valuation domain $V$ has a principal maximal ideal or a residue field that is not algebraically closed, then $\IntR(V)$ is a GPVD. This means that a valuation domain $V$ can be not (pseudo)singular and $\IntR(V)$ is still a GPVD as long as the residue field of $V$ is not algebraically closed. 

We also know that if $V$ is a valuation domain with algebraically closed residue field and maximal ideal that is not principal, then $\IntR(V)$ is not a Prüfer domain. We show that if the value group associated with $V$ is not divisible, then $\IntR(V)$ is also not a GPVD in this case. We show this by first showing that a domain that is not a Prüfer domain but an essential domain cannot be a GPVD. 

\begin{proposition}
	Let $D$ be a domain that is not a Prüfer domain with a family of essential maximal ideals $\{\m_\lambda\}_{\lambda \in \Lambda}$ such that $\bigcap\limits_{\lambda \in \Lambda} D_{\m_\lambda}$. Then $D$ is also not a GPVD.
\end{proposition}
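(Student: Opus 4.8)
The plan is to argue by contradiction: suppose $D$ is a GPVD with associated Prüfer domain $T$ and common radical ideal $J$, so that $D \subseteq T$ and (by the equivalent definition) $D/J \subseteq T/J$ is a unibranched extension of Krull dimension zero rings. The goal is to show these hypotheses force $D = T$, which contradicts the assumption that $D$ is not a Prüfer domain.

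First I would fix an essential maximal ideal $\m_\lambda$ from the given family and let $\mathfrak{m}_\lambda$ be the maximal ideal of $T$ contracting to $\m_\lambda$; this exists and is maximal in $T$ by the description of a GPVD (and is exactly the setup of Proposition \ref{Prop:GPVDEquivalentDefinition}). By hypothesis $D_{\m_\lambda}$ is a valuation domain. Proposition \ref{Prop:GPVDEquivalentDefinition} then splits into two cases. If $J \not\subseteq \mathfrak{m}_\lambda$, then $D_{\m_\lambda} = T_{\mathfrak{m}_\lambda}$ directly. If $J \subseteq \mathfrak{m}_\lambda$, then $D_{\m_\lambda}$ is a PVD with associated valuation domain $T_{\mathfrak{m}_\lambda}$; but a valuation domain is its own (unique) associated valuation domain, so $D_{\m_\lambda}$ being a valuation domain forces $D_{\m_\lambda} = T_{\mathfrak{m}_\lambda}$ again. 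Hence in every case $D_{\m_\lambda} = T_{\mathfrak{m}_\lambda}$.

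Next I would intersect over $\lambda$. Since $T \subseteq T_{\mathfrak{m}_\lambda}$ for each $\lambda$, we get
\[
T \subseteq \bigcap_{\lambda \in \Lambda} T_{\mathfrak{m}_\lambda} = \bigcap_{\lambda \in \Lambda} D_{\m_\lambda} = D,
\]
where the last equality is the defining property of the essential family $\{\m_\lambda\}$. Combined with $D \subseteq T$ this gives $D = T$, so $D$ is a Prüfer domain, contradicting the hypothesis. Therefore $D$ is not a GPVD.

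The only real subtlety is the case analysis coming from Proposition \ref{Prop:GPVDEquivalentDefinition}, together with the observation that a valuation domain which happens to be a PVD must coincide with its associated valuation domain; once that is settled, the rest is the short intersection computation above. A minor point to keep explicit is that the prime of $T$ lying over each $\m_\lambda$ is maximal in $T$, which is built into the definition of a GPVD and is used when invoking Proposition \ref{Prop:GPVDEquivalentDefinition}.
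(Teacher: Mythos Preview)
Your proof is correct and follows essentially the same approach as the paper's: assume $D$ is a GPVD with associated Prüfer domain $T$, use Proposition~\ref{Prop:GPVDEquivalentDefinition} to conclude $D_{\m_\lambda} = T_{\mathfrak{n}_\lambda}$ for each $\lambda$, then intersect to get $T \subseteq D$, a contradiction. Your explicit case analysis on whether $J \subseteq \mathfrak{n}_\lambda$ spells out what the paper compresses into a single sentence, but the argument is the same; one minor remark is that you should use a symbol other than $\mathfrak{m}_\lambda$ for the maximal ideal of $T$, since in this paper $\m$ is a macro for $\mathfrak{m}$ and the two would typeset identically.
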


\begin{proof}
	Suppose on the contrary that $D$ is a GPVD. Then $D$ has an associated Prüfer overring $T$. Now for all $\lambda \in \Lambda$, let $\mathfrak{n}_\lambda$ be the unique maximal ideal of $T$ that contracts to $\m_\lambda$. Then by Proposition \ref{Prop:GPVDEquivalentDefinition}, we deduce that $D_{\m_\lambda} = T_{\mathfrak{n}_\lambda}$ because $D_{\m_\lambda}$ is a valuation domain. 
	Thus,
	\[
	T \subseteq \bigcap_{\lambda \in \Lambda} T_{\mathfrak{n}_\lambda} = \bigcap_{\lambda \in \Lambda} D_{\mathfrak{m}_\lambda} = D,
	\]
	showing that $T = D$, but we assumed that $D$ is not Prüfer, so this is a contradiction, meaning $D$ cannot be a GPVD.
\end{proof}

\begin{corollary}\label{Cor:IntRNotPruferNotGPVD}
	Let $D$ be a domain, $K$ its field of fractions, and $E$ a subset of $K$. Assume that $\IntR(E,D)$ is not Prüfer, but there is a family of maximal ideals $\{\m_\lambda \}_{\lambda \in \Lambda}$ of $D$ such that $D = \bigcap\limits_{\lambda \in \Lambda} D_{\m_\lambda}$ and each ideal in $\{\M_{\m_\lambda, a} \mid \lambda \in \Lambda, a \in E \}$ is essential. Then $\IntR(E,D)$ is not a GPVD. 
\end{corollary}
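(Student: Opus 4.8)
The plan is to deduce this corollary from the preceding proposition (the one showing that a non-Pr\"ufer essential domain cannot be a GPVD), applied to the ring $R = \IntR(E,D)$ together with the family of maximal ideals $\{\M_{\m_\lambda, a} \mid \lambda \in \Lambda,\ a \in E\}$. Each $\M_{\m_\lambda, a}$ is a maximal ideal of $\IntR(E,D)$, since it is the kernel of the evaluation-modulo-$\m_\lambda$ surjection $\IntR(E,D) \to D/\m_\lambda$, and it is essential by hypothesis; moreover $\IntR(E,D)$ is not Pr\"ufer by hypothesis. Thus the only thing that remains to be checked before invoking the proposition is the ``local--global'' identity
\[
\IntR(E,D) = \bigcap_{\lambda \in \Lambda}\bigcap_{a \in E} \IntR(E,D)_{\M_{\m_\lambda, a}},
\]
with all localizations taken inside $K(x)$.

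For this identity, the inclusion $\subseteq$ is immediate. For the reverse inclusion, I would take $\varphi \in K(x)$ lying in every $\IntR(E,D)_{\M_{\m_\lambda, a}}$ and fix an arbitrary $a \in E$. For each $\lambda$, write $\varphi = f/g$ with $f \in \IntR(E,D)$ and $g \in \IntR(E,D) \setminus \M_{\m_\lambda, a}$, so $g(a) \notin \m_\lambda$; in particular $g(a) \neq 0$, hence $a$ is not a pole of $\varphi$ and $\varphi(a) = f(a)/g(a) \in D_{\m_\lambda}$. Letting $\lambda$ vary gives $\varphi(a) \in \bigcap_{\lambda \in \Lambda} D_{\m_\lambda} = D$, and since $a \in E$ was arbitrary, $\varphi \in \IntR(E,D)$. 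Feeding this back into the preceding proposition yields that $\IntR(E,D)$ is not a GPVD.

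I do not expect a genuine obstacle here: this corollary is essentially a repackaging of the preceding proposition via the standard intersection description of $\IntR(E,D)$. The only points needing a word of care are that each $\M_{\m_\lambda, a}$ is indeed maximal (already recorded when these ideals were defined) and that $\varphi(a)$ is well defined, which is exactly what membership in the localization at $\M_{\m_\lambda, a}$ guarantees by forcing a representative of $\varphi$ whose denominator does not vanish at $a$.
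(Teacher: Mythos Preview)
Your proposal is correct and follows essentially the same approach as the paper: both verify the intersection identity $\IntR(E,D) = \bigcap_{\lambda,a} \IntR(E,D)_{\M_{\m_\lambda,a}}$ by showing $\IntR(E,D)_{\M_{\m_\lambda,a}} \subseteq \{\varphi \in K(x) \mid \varphi(a) \in D_{\m_\lambda}\}$ and then invoke the preceding proposition. Your version is slightly more explicit about choosing a representative $f/g$ with $g(a)\notin\m_\lambda$, but the underlying argument is identical.
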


\begin{proof}
	Because $\IntR(E,D)_{\M_{\m_\lambda, a}} \subseteq \{ \varphi \in K(x) \mid \varphi(a) \in D_{\m_\lambda} \}$ for every $\lambda \in \Lambda $ and $a \in E$, we know that $\IntR(E,D) = \bigcap\limits_{\lambda \in \Lambda}\bigcap\limits_{a \in E}\IntR(E,D)_{\M_{\m_\lambda, a}}$. Therefore, $\IntR(E,D)$ is not a GPVD. 
\end{proof}

In order to use the previous corollary, we have to show that the maximal pointed ideals are essential. 

\begin{proposition}\label{Prop:PointedMaximalIdealsAreEssential}\cite[Proposition 2.35]{Liu}
	Let $V$ be a valuation domain whose value group is not divisible. Let $\m$ be the maximal ideal of $V$, $E$ be a subset of $K$, the field of fractions of $V$, and take $a \in E$. Then 
	\[\IntR(E, V)_{\mathfrak{M}_{\m, a}} = \{\varphi \in K(x) \mid \varphi(a) \in V \},\] a valuation domain. 
\end{proposition}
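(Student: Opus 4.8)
The plan is to prove the two inclusions separately; the hypothesis that the value group is not divisible is needed only for the harder one. For $\IntR(E,V)_{\M_{\m,a}}\subseteq\{\varphi\in K(x)\mid\varphi(a)\in V\}$, write an arbitrary element of the localization as $\psi/\eta$ with $\psi,\eta\in\IntR(E,V)$ and $\eta(a)\in V\setminus\m$; since $V$ is a valuation domain, $\eta(a)$ is a unit, so $(\psi/\eta)(a)=\psi(a)\eta(a)^{-1}\in V$. I would also record here that the right-hand side equals $\IntR(\{a\},V)$ and is a valuation domain: for nonzero $\varphi\in K(x)$, either $\varphi$ has a pole at $a$ — so $\varphi^{-1}$ vanishes at $a$ and $\varphi^{-1}(a)=0\in V$ — or $\varphi(a)\in K$ and one of $\varphi(a),\varphi(a)^{-1}$ lies in $V$. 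Neither point uses the hypothesis on the value group.

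The content of the proposition is the reverse inclusion. Fix a valuation $v$ for $V$ with value group $\Gamma$ and let $\varphi\in K(x)$ with $\varphi(a)\in V$; it suffices to produce $\eta\in\IntR(E,V)$ with $\eta(a)\notin\m$ and $\eta\varphi\in\IntR(E,V)$, since then $\varphi=(\eta\varphi)/\eta\in\IntR(E,V)_{\M_{\m,a}}$. To build $\eta$ I would first use Lemma~\ref{Lem:MinvalForm} to confine the bad behavior of $\varphi$: it has only finitely many poles, none at $a$; $\minval_\varphi$ is piecewise linear with finitely many breakpoints and integer slopes; and $v(\varphi(d))=\minval_\varphi(v(d))$ for all $d$ whose valuation avoids a fixed finite set $\Sigma\subseteq\Gamma$. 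So, off the poles of $\varphi$ and the levels in $\Sigma$, the only $d\in E$ with $\varphi(d)\notin V$ have $v(d)\in\{\gamma\mid\minval_\varphi(\gamma)<0\}$, and on that set $-\minval_\varphi$ is piecewise affine, growing at most linearly in $|\gamma|$.

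Non-divisibility then supplies the building blocks for $\eta$. Fix $n\ge2$ with $n\Gamma\ne\Gamma$ (for instance a power of one such integer). For $c\in K^\times$ with $v(c)\notin n\Gamma$, the function $\tfrac{c}{c+(x-a)^{n}}$ has no pole, as $v(c)$ and $nv(d-a)\in n\Gamma$ can never coincide; it equals $1$ at $a$; and $v\bigl(\tfrac{c}{c+(d-a)^{n}}\bigr)=\max\{0,\,v(c)-nv(d-a)\}\ge0$ for every $d$, so it lies in $\IntR(K,V)\subseteq\IntR(E,V)$, is a unit at $a$, and has valuation growing linearly in $-v(d)$ as $v(d)\to-\infty$. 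Functions of the shapes $\tfrac{(x-a)^{n}}{c+(x-a)^{n}}$ and $\tfrac{(x-r)^{n}}{c+(x-r)^{n}}$ behave dually and, with $v(c)$ chosen suitably, vanish as $v(d)\to+\infty$ or at a prescribed pole $r$ of $\varphi$ while staying units at $a$. Since $\alpha+n\beta\in\Gamma\setminus n\Gamma$ for $\alpha\in\Gamma\setminus n\Gamma$, $\beta\in\Gamma$, and $\Gamma$ is nontrivial hence unbounded, $v(c)$ can be taken in $\Gamma\setminus n\Gamma$ arbitrarily large positive or negative. A high enough power of a suitable product of such factors should then give $\eta\in\IntR(E,V)$ with $\eta(a)\notin\m$ and $v(\eta(d))$ large enough to absorb $-v(\varphi(d))$ for all $d\in E$, so that $\eta\varphi\in\IntR(E,V)$.

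The step I expect to be the main obstacle is the bookkeeping at the finitely many exceptional valuation-levels of Lemma~\ref{Lem:MinvalForm}: for a rational function, $v(\varphi(d))$ can be strictly below $\minval_\varphi(v(d))$ at such a level — for instance when $d\in E$ approaches a pole of $\varphi$ lying outside $E$ — so one must bound $v(\varphi(d))$ directly there and arrange that $\eta$ vanishes enough at all such $d$, while still keeping $\eta(a)$ a unit; this last constraint is delicate precisely when $v(a)$ is itself one of the bad levels and governs how the companion factors must be chosen. One also has to reduce a general $\varphi=f/g$ to this analysis by first clearing the denominator $g$ with a unit-at-$a$ multiplier drawn from the same family of functions. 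Once a valid $\eta$ is obtained, the reverse inclusion follows, so $\IntR(E,V)_{\M_{\m,a}}=\IntR(\{a\},V)$, a valuation domain.
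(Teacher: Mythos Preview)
This proposition is quoted from \cite{Liu} without proof in the present paper, so there is no in-paper argument to compare against. Your easy inclusion and the verification that the right-hand side is a valuation domain are correct, and your crucial insight---that non-divisibility of $\Gamma$ supplies pole-free auxiliary functions $\frac{c}{c+(\cdot)^n}$ whenever $v(c)\notin n\Gamma$---is exactly the right one. But the specific implementation has a genuine gap, and it is not quite the obstacle you flag.

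Your building blocks are centered at points of $K$: a factor $\theta_{r,c}=\frac{(x-r)^n}{c+(x-r)^n}$ for each pole $r\in K$ of $\varphi$, together with $\rho_c$ for the tails. These handle actual poles of $\varphi$ lying in $K$ (whether in $E$ or not). They do \emph{not} handle roots of the denominator $g$ that lie in $\overline{K}\setminus K$ yet are approximated arbitrarily well from inside $K$. For non-henselian $V$ this occurs: with $V=\Z_{(7)}$, $a=0$, and $\varphi=\frac{1}{x^2-2}$, one has $\varphi(a)=-\frac12\in V$ and $\varphi$ has no pole in $\Q$, but there are $d\in\Z$ with $v_7(d^2-2)$, hence $-v_7(\varphi(d))$, as large as you like---all at the single level $v_7(d)=0$. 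Any finite product of point-centered factors has \emph{bounded} valuation on that level and cannot absorb this; your proposed reduction ``clearing the denominator $g$'' hits the same wall.

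The repair is to feed $\varphi$ itself into your trick instead of $x-b$. Subtract the constant $\varphi(a)\in V$ to reduce to $\varphi(a)=0$, and set
\[
\eta=\frac{c}{\,c+\varphi^{\,n}\,},\qquad v(c)>0,\quad v(c)\notin n\Gamma.
\]
Writing $\varphi=f/g$ with $\gcd(f,g)=1$ gives $\eta=\dfrac{cg^n}{cg^n+f^n}\in K(x)$; since $v(c)\neq nv(\varphi(d))$ for all $d\in K$, the denominator $cg^n+f^n$ never vanishes on $K$. A direct check shows $v(\eta(d))=\max\{0,\,v(c)-nv(\varphi(d))\}\ge 0$, $\eta(a)=1$, and $v((\eta\varphi)(d))>0$ for every $d\in K$ (at poles of $\varphi$, the expression $\eta\varphi=cfg^{n-1}/(cg^n+f^n)$ vanishes). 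Hence $\eta\in\IntR(K,V)\subseteq\IntR(E,V)$, $\eta\notin\M_{\m,a}$, and $\eta\varphi\in\IntR(E,V)$, so $\varphi=(\eta\varphi)/\eta\in\IntR(E,V)_{\M_{\m,a}}$---with none of the bookkeeping you anticipated.
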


\begin{corollary}
	Suppose that $V$ is a valuation domain such that the maximal ideal is not principal, the residue field is algebraically closed, and the value group is not divisible. Then $\IntR(V)$ is not a GPVD. 
\end{corollary}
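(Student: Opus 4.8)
The plan is to obtain this corollary as an immediate synthesis of Theorem~\ref{Thm:IntRVClassification}, Proposition~\ref{Prop:PointedMaximalIdealsAreEssential}, and Corollary~\ref{Cor:IntRNotPruferNotGPVD}, applied to the base domain $D = V$ with $E = V$, so that $\IntR(E,D) = \IntR(V)$. First I would use Theorem~\ref{Thm:IntRVClassification}: since the maximal ideal $\m$ of $V$ is not principal and $V/\m$ is algebraically closed, neither sufficient condition there holds, so $\IntR(V)$ is not a Pr\"ufer domain.

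Next, since $V$ is local, the singleton family $\Lambda = \{\m\}$ of maximal ideals trivially satisfies $V = V_\m = \bigcap_{\lambda \in \Lambda} V_{\m_\lambda}$. Then, because the value group of $V$ is not divisible, Proposition~\ref{Prop:PointedMaximalIdealsAreEssential} applies with $E = V$: for every $a \in V$ the localization $\IntR(V)_{\M_{\m, a}}$ equals $\{\varphi \in K(x) \mid \varphi(a) \in V\}$, which is a valuation domain, so every ideal in $\{\M_{\m, a} \mid a \in V\}$ is essential. At this point all hypotheses of Corollary~\ref{Cor:IntRNotPruferNotGPVD} are in place with $D = V$, $E = V$, and $\Lambda = \{\m\}$, namely $\IntR(V)$ is not Pr\"ufer, $V = \bigcap_{\lambda \in \Lambda} V_{\m_\lambda}$, and each $\M_{\m, a}$ is essential, so Corollary~\ref{Cor:IntRNotPruferNotGPVD} yields that $\IntR(V)$ is not a GPVD.

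There is no genuine obstacle here, only two small bookkeeping points to check carefully: that the notion of ``essential'' required by Corollary~\ref{Cor:IntRNotPruferNotGPVD} (localization at the ideal is a valuation domain) is exactly what Proposition~\ref{Prop:PointedMaximalIdealsAreEssential} delivers, and that the trivial singleton family $\{\m\}$ is a legitimate witness of $V = \bigcap D_{\m_\lambda}$ since $V$ is already local. The substantive work was done in the earlier results.
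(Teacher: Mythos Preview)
Your proposal is correct and follows essentially the same route as the paper's proof: invoke Theorem~\ref{Thm:IntRVClassification} to see that $\IntR(V)$ is not Pr\"ufer, use Proposition~\ref{Prop:PointedMaximalIdealsAreEssential} to verify that the maximal pointed ideals are essential, and then conclude via Corollary~\ref{Cor:IntRNotPruferNotGPVD}. Your explicit bookkeeping about the singleton family $\Lambda=\{\m\}$ and the meaning of ``essential'' is a bit more detailed than the paper's terse version, but the argument is identical in substance.
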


\begin{proof}
	We know that $\IntR(V)$ is not a Prüfer domain due to Theorem \ref{Thm:IntRVClassification}. The maximal pointed ideals of $\IntR(V)$ are essential due to Proposition \ref{Prop:PointedMaximalIdealsAreEssential}. Together, this allows us to apply Corollary \ref{Cor:IntRNotPruferNotGPVD} to show that $\IntR(V)$ is not a GPVD.
\end{proof}

Let $D$ be a PVD with associated valuation domain $V$. First suppose that $D \neq V$. Then $\IntR(D)$ is a GPVD if and only if the maximal ideal is principal. If $\IntR(D)$ is a GPVD, then the associated Prüfer domain is $\IntR(D,V)$. Interestingly, if $D \neq V$ and $V$ has a residue field that is not algebraically closed, then $\IntR(D,V)$ is a Prüfer domain \cite[Theorem 3.2]{IntValuedRational}, but $\IntR(D)$ is not a GPVD. 

Now suppose the base ring is a valuation domain $V$. If the residue field of $V$ is not algebraically closed or the maximal ideal is principal, then $\IntR(V)$ is a Prüfer domain and thus a GPVD. If the residue field of $V$ is algebraically closed, the maximal ideal of $V$ is not principal, and the value group associated with $V$ is not divisible, then $\IntR(V)$ is not a GPVD. The remaining case is the case where the residue field of $V$ is algebraically closed and the value group associated with $V$ is divisible.

\section{Local rings of integer-valued rational functions}\label{Sect:Local}

\indent\indent In this section, we give a family of rings of integer-valued rational functions over PVDs that are local domains. This uses a result about rational functions as maps between fields. First, we give a lemma that shows that if we have a valuation that in a sense separates the units and the maximal ideal of a local domain, then we get a ring of integer-valued rational functions that is local. 

\begin{lemma}\label{Lem:MinvalDichotomyImpliesLocal}\cite[Lemma 3.4]{IntRFactor}
	Let $D$ be a local domain with maximal ideal $\m$ and field of fractions $K$. Suppose that there is a valuation overring $V$ of $D$ with a valuation $v$ such that for all $d \in \m$, we have $v(d) > 0$. Also let $\Gamma$ be the value group of $v$. Suppose that for all $\varphi \in \IntR(K,D)$ that $\minval_\varphi(\gamma) = 0$ for all $\gamma \in \Gamma$ or $\minval_\varphi(\gamma) > 0$ for all $\gamma \in \Gamma$. Then $\IntR(K,D)$ is a local domain with maximal ideal $\IntR(K,\m)$.
\end{lemma}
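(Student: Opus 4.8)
The plan is to show directly that $\IntR(K,\m)$ is the unique maximal ideal of $\IntR(K,D)$. First I would check that $\IntR(K,\m)$ is a proper ideal: it is proper because $1 \notin \IntR(K,\m)$ (as $1 \notin \m$), and it is an ideal by the remark in the introduction that $\IntR(E,I)$ is always an ideal of $\IntR(E,D)$. The key reduction is that it suffices to show every $\varphi \in \IntR(K,D) \setminus \IntR(K,\m)$ is a unit in $\IntR(K,D)$; then $\IntR(K,\m)$ contains every proper ideal and hence is the unique maximal ideal, so $\IntR(K,D)$ is local.

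So fix $\varphi \in \IntR(K,D) \setminus \IntR(K,\m)$. By definition there is some $c \in K$ with $\varphi(c) \in D \setminus \m$, i.e. $\varphi(c)$ is a unit of the local ring $D$. The next step is to invoke the dichotomy hypothesis applied to $\varphi$: either $\minval_\varphi(\gamma) = 0$ for all $\gamma \in \Gamma$, or $\minval_\varphi(\gamma) > 0$ for all $\gamma \in \Gamma$. I would rule out the second alternative. If $\minval_\varphi$ were strictly positive everywhere, then by Lemma \ref{Lem:MinvalForm} (the ``all but finitely many'' clause), for all but finitely many $\gamma \in \Gamma$ and all $t \in K$ with $v(t) = \gamma$ we would have $v(\varphi(t)) = \minval_\varphi(v(t)) > 0$; since $\varphi(c)$ is a unit of $D$ and $v(d) > 0$ for all $d \in \m$, an output of $\varphi$ lying in $D \setminus \m$ must have $v$-value $0$, which is consistent with only finitely many fibers $v(t) = \gamma$ — but I need to be a bit careful here, since $\minval_\varphi$ predicts the valuation only off a finite set. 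The cleaner route: $\varphi(c) \in D \setminus \m$ forces $v(\varphi(c)) = 0$ (because every nonunit of $D$ has positive $v$-value, and $\varphi(c)$ is a unit so neither it nor its inverse is in $\m$), and then the piecewise-linear description together with $v(\varphi(c)) \ge \minval_\varphi(v(c))$ combined with the fact that if $\minval_\varphi > 0$ everywhere then in particular $\minval_\varphi$ would have to exceed $0$ on the relevant region — this is the place where I expect to actually have to argue, perhaps by choosing $c$ generically or by using that $D$'s maximal ideal pulls back cleanly. Granting that, we conclude $\minval_\varphi \equiv 0$ on $\Gamma$.

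Now consider $\psi \coloneqq 1/\varphi \in K(x)$. Since $\minval_{1/\varphi}(\gamma) = -\minval_\varphi(\gamma) = 0$ for all $\gamma \in \Gamma$, and since by Lemma \ref{Lem:MinvalForm} $\minval_\psi$ predicts $v(\psi(t))$ for all but finitely many fibers, $\psi$ is ``valuation-neutral'' almost everywhere; the remaining step is to upgrade this to $\psi \in \IntR(K,D)$, i.e. $\psi(a) \in D$ for every $a \in K$. For this I would argue pointwise: given $a \in K$, either $\varphi(a) \in D \setminus \m$ — then $\varphi(a)$ is a unit of $D$ and $\psi(a) = \varphi(a)^{-1} \in D$ — or $\varphi(a) \in \m$, which I must exclude, or $\varphi(a)$ is a pole. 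The case $\varphi(a) \in \m$ is impossible: if it held for some $a$, then near $a$ (or using the minval prediction at $\gamma = v(\text{something})$) the value $v(\varphi(a)) > 0$, contradicting $\minval_\varphi \equiv 0$ once we know the prediction holds at the relevant value — again this requires the ``for all but finitely many $\gamma$'' clause to be leveraged, plus possibly a density/continuity argument to handle the finitely many exceptional fibers, which is exactly the technical heart of the lemma. The main obstacle, then, is bridging the gap between what $\minval_\varphi$ guarantees generically and what must hold at every single point $a \in K$ (including the finitely many bad fibers and poles); I would handle this by combining the pointwise containment $\varphi(a) \in D$ with the global minval information, showing that $\varphi(a) \in \m$ can be detected by $\minval_\varphi$ taking a positive value on a whole region, contradicting the dichotomy. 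Once $\psi \in \IntR(K,D)$ is established, $\varphi \cdot \psi = 1$ exhibits $\varphi$ as a unit, completing the proof.
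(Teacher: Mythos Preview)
The paper does not prove this lemma; it is merely stated and cited from \cite[Lemma~3.4]{IntRFactor}. So there is no in-paper proof to compare your proposal against. That said, your outline is the natural one --- show that every element of $\IntR(K,D)\setminus\IntR(K,\m)$ is a unit --- and is almost certainly what the cited reference does. But as written, your proposal has two genuine gaps.

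First, the inequality $v(\varphi(c)) \ge \minval_\varphi(v(c))$ that you invoke to rule out the ``$\minval_\varphi>0$ everywhere'' branch is \emph{false} for rational functions. It holds for polynomials (ultrametric inequality), but for $\varphi=f/g$ a jump in $v(g(c))$ above $\minval_g(v(c))$ can drag $v(\varphi(c))$ \emph{below} $\minval_\varphi(v(c))$. Concretely, over $V=k[[s]]$ the function $\varphi=s^2/((x-1)^2+s)$ lies in $\IntR(K,V)$ with $\minval_\varphi(0)=2$, yet $v(\varphi(1))=1$. So this step needs a different justification; you cannot simply read off a contradiction from $v(\varphi(c))=0$ and $\minval_\varphi(v(c))>0$.

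Second, and more seriously, you correctly flag the crux --- upgrading ``$\minval_\varphi\equiv 0$'' to ``$\varphi(a)\notin\m$ for \emph{every} $a\in K$'' --- but do not supply an argument, only a hope that ``density/continuity'' will handle the finitely many exceptional fibres. It will not on its own: Lemma~\ref{Lem:MinvalForm} gives you nothing at those fibres, and a single $a$ with $\varphi(a)\in\m$ would block invertibility. The key point you are missing is that the dichotomy hypothesis is quantified over \emph{all} elements of $\IntR(K,D)$, not just the fixed $\varphi$. A complete argument has to exploit this by applying the dichotomy to auxiliary functions built from $\varphi$ (shifts, rescalings, or combinations that move the exceptional fibre), thereby forcing the pointwise conclusion. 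Your sketch never invokes the hypothesis on anything other than $\varphi$ itself, and that is why the bridge from generic to pointwise behaviour remains uncrossed.
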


To create this dichotomy in the minimum valuation function when the base ring is a PVD, we make use a lemma about rational functions as maps between from a larger field to a smaller field in a field extension. The field extension of interest here is the one from the residue field of the PVD to the residue field of the associated valuation domain. 

\begin{definition}
	Let $L/M$ be a purely inseparable field extension of fields of characteristic $p > 0$. We say that $L/M$ is of \textbf{finite exponent} if there exists some $e \in \N$ such that $a^{p^e} \in M$ for all $a \in L$. 
\end{definition}

The proof of the following lemma will be in Section \ref{Sect:FieldMaps}.

\begin{lemma}\label{Lem:NoRatMapToSubfield2}
	Let $L/M$ be a field extension that is not purely inseparable of finite exponent. Additionally, suppose that $L$ is an infinite field. Then there does not exist a nonconstant rational function $\varphi \in L(x)$ such that $\varphi(d) \in M$ for all but finitely many $d \in L$. 
\end{lemma}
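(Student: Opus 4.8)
The plan is to argue by contraposition: assuming there is a nonconstant $\varphi \in L(x)$ with $\varphi(d) \in M$ for all but finitely many $d \in L$, I will show that $L/M$ must be purely inseparable of finite exponent. Write $\varphi = f/g$ in lowest terms with $f, g \in L[x]$. The first reduction is to pass to the fraction-field level: the image of $\varphi$, as a rational map $\mathbb{P}^1_L \to \mathbb{P}^1_L$, is all of $\mathbb{P}^1_L$ (a nonconstant rational function is surjective on $L$-points up to finitely many exceptions, since for each $c \in L$ the equation $f(x) - c\,g(x) = 0$ has a root as soon as $\deg$ considerations and the algebraic closure are used — more carefully, $\varphi$ hits every element of $L$ that is a value $f(a)/g(a)$, and the set of $c \in L$ \emph{not} of this form is governed by whether $f - cg$ has a root in $L$). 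The key point I want to extract is: since $\varphi(d) \in M$ for cofinitely many $d \in L$, the subfield $M(\varphi) \subseteq L(x)$ — wait, rather the relevant object is $M\bigl(\varphi(d) : d \in L\bigr)$; what I actually want is that $L$ is "almost" algebraic of bounded, purely inseparable degree over $M$.

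Here is the cleaner route I would actually take. Consider the field $L(x)$ and its subfield $M(\varphi) = M\bigl(f/g\bigr)$. Since $\varphi$ is nonconstant, $L(x)/L(\varphi)$ is a finite extension of degree $d = \max(\deg f, \deg g)$, and $L(\varphi) \cong L(y)$ is rational. Now $L(x) = L \cdot M(\varphi)(x')$ for a suitable parameter — the substantive claim is that $L(x)/M(\varphi)$ is finite, hence $L/M$ is finite once we check $L \cap M(\varphi)(x) $ appropriately; but degree alone does not give pure inseparability. So the finitely-many-exceptions hypothesis must be used more sharply: I would pick a large finite set of points $d_1, \dots, d_N \in L$ with $\varphi(d_i) \in M$, interpolate, and use that the coefficients of $\varphi$ (or of an associated polynomial) are then forced to be purely inseparable over $M$ with a uniform exponent bound coming from $d = \deg \varphi$. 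Concretely: if $\varphi(d) \in M$ for all $d$ in a set $S$ with $|S| > 2d$, then viewing $f - \varphi(d) g$ as vanishing at... this needs the right bookkeeping. The honest version: reduce to $\varphi$ a polynomial by composing with a Möbius transformation over $M$ (a fractional linear map with coefficients in $M$ preserves the hypothesis and nonconstancy), then for a polynomial $\varphi = a_n x^n + \cdots + a_0$ with $\varphi(d) \in M$ for cofinitely many $d \in L$, take $n+1$ distinct such $d$'s inside $M$ first if $M$ is infinite — but $M$ may be finite. When $M$ is finite, $L$ infinite forces $L/M$ transcendental or infinite algebraic, and one shows directly no such $\varphi$ exists unless... this is exactly where pure inseparability of finite exponent is the sole escape (e.g. $L = M^{1/p}$, $\varphi(x) = x^p$).

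The main obstacle, and where I expect the real work to be, is handling the case where $M$ is a finite field (so one cannot simply plug in many elements of $M$ and invert a Vandermonde matrix over $M$) and more generally extracting the \emph{uniform exponent} $e$ with $a^{p^e} \in M$ for all $a \in L$ from the single rational function $\varphi$ — the degree of $\varphi$ should provide this bound, but connecting "$\varphi$ maps $L$ into $M$" to "Frobenius$^e$ maps $L$ into $M$" requires showing that any $a \in L$ can be expressed via $\varphi$-values over $M$ in a controlled way. I would attempt this by: (i) showing $L = M(\alpha)$ is necessarily a simple extension of the right type by analyzing $L \otimes_M M^{\mathrm{alg}}$, or (ii) using that $\varphi^{-1}(M) \supseteq L \setminus (\text{finite})$ means the "generic fiber" structure of $\varphi$ over $M$ already sees all of $L$, forcing $L(x)/M(\varphi)$ to be purely inseparable of exponent bounded by $\log_p d$. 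I would present the argument as: first the Möbius reduction to polynomial $\varphi$; then the separability argument (if $L/M$ has a separable subextension $M \subsetneq M' \subseteq L$, derive a contradiction by a derivative/ramification computation on $\varphi$, or by the primitive element theorem plus a counting argument on how many values of a polynomial can land in a proper subfield); then the bounded-exponent argument for the purely inseparable part. If a fully self-contained finite-field case proves too long, I would fall back on the structure theorem for the separable closure of $M$ in $L$ and cite standard facts about purely inseparable extensions.
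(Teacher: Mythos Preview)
Your proposal is not yet a proof, and several of the intended reductions do not go through.

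The most concrete failure is the ``M\"obius reduction to a polynomial.'' Post-composing $\varphi = f/g$ with a fractional linear map $\mu(y) = (ay+b)/(cy+d)$ over $M$ gives $(af+bg)/(cf+dg)$, which is again a rational function of the same degree, not a polynomial. So this step does not reduce the rational case to the polynomial case. Relatedly, your field-extension outline via $L(x)/M(\varphi)$ does not give what you need: if $L/M$ is not algebraic the extension $L(x)/M(\varphi)$ is not even finite, and in the algebraic case the degree bound alone says nothing about inseparability or a uniform exponent. Finally, you have the difficulty of the two cases backwards: when $M$ is finite, the product $(\varphi - d_1)\cdots(\varphi - d_k)$ vanishes on a cofinite subset of the infinite field $L$, hence identically, so $\varphi$ is constant --- this case is one line, not the ``main obstacle.''

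The paper's argument runs along a different spine. First it disposes of finite $M$ as above. For infinite $M$, it uses an interpolation principle (if $\varphi \in L(x)$ maps an infinite subset of $M$ into $M$, then in fact $\varphi \in M(x)$) to reduce from coefficients in $L$ to coefficients in $M$ --- this is the step your Vandermonde remark gestures at, and it is the correct replacement for the failed M\"obius reduction. The substantive work is then for $\varphi \in M(x)$: one fixes $\alpha \in L \setminus M$ of degree $n$, substitutes $x_0 + x_1\alpha + \cdots + x_{n-1}\alpha^{n-1}$, and uses $M$-linear independence of $1,\alpha,\dots,\alpha^{n-1}$ to force the denominator $g$ to take values in $M$ on all of $M[\alpha]$; a separate polynomial lemma then shows $g$ is constant. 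That polynomial lemma (for $f \in M[x]$ mapping $L$ into $M$) is proved by first showing $x^{\deg f}$ has the same property, then taking the minimal degree $m$ and using $(x+1)^m - x^m$ together with Lucas's theorem on $\binom{m}{i}$ to force $m = p^r$. None of these three mechanisms --- the interpolation descent to $M(x)$, the multivariable substitution to kill the denominator, or the minimal-degree plus Lucas argument --- appears in your outline, and they are where the actual content lies.
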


This fact about rational functions as maps between fields will be used alongside the polynomials that fall out considering the residue fields of the PVD and the associated valuation domain. These polynomials are called local polynomials.

\begin{definition}\cite{Liu}
	Let $V$ be a valuation domain with an associated valuation $v$ and field of fractions $K$. Take $f \in K[x]$ be a nonzero polynomial and $t \in K$. We define the \textbf{local polynomial of $f$ at $t$} to be \[\loc_{f, v, t}(x) = \frac{f(tx)}{a_dt^d} \mod \m,\] where  $d = \max\{i \in \{0, 1, \dots, n\} \mid v(a_i) + iv(t) = \minval_f(v(t)) \}$ and $\m$ is the maximal ideal of $V$. This is a well-defined monic polynomial with coefficients in $V/\m$.
\end{definition}

One utility of the local polynomial is that it can determine the coefficients that appear in the minimal valuation function. 

\begin{lemma}\label{Lem:PowersOfLoc}\cite[Lemma 2.25]{Liu}
	Take $\varphi \in K(x)$ to be nonzero and $\alpha \in \Gamma$. There exist $\varepsilon \in \Q\Gamma$ with $\varepsilon > 0$ small enough, $c, c' \in \Z$, and $\beta, \beta' \in \Gamma$ such that
	\[
	\minval_\varphi(\gamma) = \begin{cases}
		c\gamma + \beta, & \text{if $\alpha - \varepsilon < \gamma < \alpha$},\\
		c'\gamma + \beta', & \text{if $\alpha < \gamma < \alpha + \varepsilon$}. 
	\end{cases}
	\]
	Write $\varphi = \frac{f}{g}$ for some $f, g \in K[x]$. Take $t \in K$ such that $v(t) = \gamma$. We can write $\loc_{f, t} = a_{i_1}x^{i_1} + \cdots + a_{i_r}x^{i_r}$ and $\loc_{g, t} = b_{j_1}x^{j_1}+ \cdots + b_{j_s}x^{j_s}$ for some nonzero $a_{i_1}, \dots, a_{i_r}, b_{j_1}, \dots, b_{j_s} \in V/\m$. Then
	\[
	c = i_r - j_s \quad \text{and} \quad c' = i_1-j_1.
	\]
\end{lemma}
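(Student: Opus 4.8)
The plan is to reduce the statement to the elementary geometry of $\minval_f$ and $\minval_g$ as minima of affine functions, and then to read off the one-sided slopes from the local polynomials. Write $\varphi = \frac{f}{g}$ with $f, g \in K[x]$ nonzero, so that $\minval_\varphi = \minval_f - \minval_g$ by definition. The first half of the statement --- existence of $\varepsilon \in \Q\Gamma$, $\varepsilon > 0$, integers $c, c'$ and $\beta, \beta' \in \Gamma$ with the displayed piecewise form --- is immediate from Lemma \ref{Lem:MinvalForm}: that lemma presents $\minval_\varphi$ as a piecewise linear function on $\Q\Gamma$ with integer slopes, $\Gamma$-valued intercepts and finitely many breakpoints $\delta_1 < \cdots < \delta_{k-1}$, so it suffices to take $\varepsilon > 0$ small enough that $(\alpha - \varepsilon, \alpha + \varepsilon)$ contains no breakpoint other than possibly $\alpha$ itself. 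Everything then comes down to identifying the left slope $c$ and the right slope $c'$ of $\minval_\varphi$ at $\alpha$.

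The next step is to pin down these one-sided slopes for a single nonzero polynomial. Write $f(x) = a_n x^n + \cdots + a_0$ and, for each $i$ with $a_i \neq 0$, put $\ell_i(\gamma) = v(a_i) + i\gamma$, so that $\minval_f = \min_i \ell_i$ is a finite minimum of affine functions having pairwise distinct slopes $i$; in particular $\minval_f$ is concave and piecewise linear and has one-sided derivatives everywhere. Let $S_f = \{\, i : a_i \neq 0,\ v(a_i) + i\alpha = \minval_f(\alpha) \,\}$ be the set of indices attaining the minimum at $\gamma = \alpha$, with $i_1 = \min S_f$ and $i_r = \max S_f$. For sufficiently small $\eta > 0$, every $\ell_j$ with $j \notin S_f$ stays strictly above the minimum on $(\alpha - \eta, \alpha + \eta)$, so on this punctured neighborhood the minimum is attained only among the $\ell_i$ with $i \in S_f$; since lowering $\gamma$ by $\eta$ decreases $\ell_i$ by $i\eta$, on $(\alpha - \eta, \alpha)$ the minimum is attained exactly at $i = i_r$, and symmetrically on $(\alpha, \alpha + \eta)$ exactly at $i = i_1$. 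Hence the left slope of $\minval_f$ at $\alpha$ is $i_r$ and its right slope is $i_1$; the same argument applied to $g$ gives left slope $j_s = \max S_g$ and right slope $j_1 = \min S_g$.

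Finally, I would connect $S_f$ to the local polynomial at scale $\alpha$. Taking $t \in K$ with $v(t) = \alpha$ and $d = \max\{ i : v(a_i) + i\alpha = \minval_f(\alpha) \} = i_r$, the coefficient of $x^i$ in $\loc_{f, v, t}(x) = \frac{f(tx)}{a_d t^d} \bmod \m$ is $\frac{a_i t^i}{a_d t^d} \bmod \m$, which is nonzero precisely when $v(a_i) + i\alpha = \minval_f(\alpha)$; thus the exponents occurring in $\loc_{f, v, t}$ are exactly $S_f$, so the $i_1, \dots, i_r$ of the statement coincide with $\min S_f$ and $\max S_f$, and likewise $j_1, \dots, j_s$ with $\min S_g$ and $\max S_g$. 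Combining with $\minval_\varphi = \minval_f - \minval_g$ and the additivity of one-sided derivatives under subtraction, the left slope of $\minval_\varphi$ at $\alpha$ is $c = i_r - j_s$ and the right slope is $c' = i_1 - j_1$, which is the claim. The step I expect to require the most care is the tie-breaking argument of the second paragraph --- verifying that on a small enough one-sided neighborhood of $\alpha$ the active index among the tied ones is the largest (on the left) or smallest (on the right), and that the finitely many non-tied $\ell_j$ are harmless --- together with checking that the degenerate cases ($|S_f| = 1$ or $|S_g| = 1$, $\varphi$ constant, or $\alpha$ not a breakpoint of $\minval_\varphi$) are automatically subsumed; all of this is routine given Lemma \ref{Lem:MinvalForm}.
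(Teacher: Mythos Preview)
Your argument is correct. The paper does not actually prove this lemma; it is quoted from \cite[Lemma~2.25]{Liu} and only used as a black box, so there is no in-paper proof to compare against. Your reduction to the one-sided slopes of the concave piecewise-linear functions $\minval_f$ and $\minval_g$, together with the identification of the support of $\loc_{f,v,t}$ with the set $S_f$ of indices attaining the minimum at $\alpha$, is exactly the natural proof and goes through without difficulty; the tie-breaking step you flag as delicate is indeed routine once one observes that the finitely many $\ell_j$ with $j \notin S_f$ are bounded strictly away from $\minval_f(\alpha)$ and hence stay above the minimum on a small neighborhood. One cosmetic point: the statement as printed has $v(t) = \gamma$ where it should read $v(t) = \alpha$, and you have silently (and correctly) read it that way.
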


We now show that the ring of integer-valued rational functions over a PVD on its field of fractions can be local under certain conditions on the residue fields. 

\begin{proposition}\label{Prop:PVDConditionsMakeIntRLocal}
	Let $D$ be a PVD with $V$ being the corresponding valuation overring, and $\m$ being the common maximal ideal. Suppose that $\Gamma$, the value group of $V$, is divisible. Let $M := D/\m$ and $L := V/\m$. Suppose further that $L/M$ is not purely inseparable of finite exponent and $L$ is infinite. Then $\IntR(K,D)$ is local with maximal ideal $\IntR(K,\m)$.
\end{proposition}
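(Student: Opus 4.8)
The plan is to verify the hypotheses of Lemma~\ref{Lem:MinvalDichotomyImpliesLocal} for the valuation overring $V$ of $D$. The first hypothesis is immediate: since $V$ is the associated valuation domain of the PVD $D$, it shares the maximal ideal $\m$ with $D$, so every $d \in \m$ satisfies $v(d) > 0$. Thus the entire content of the proof is to establish the dichotomy: for every $\varphi \in \IntR(K,D)$, either $\minval_\varphi(\gamma) = 0$ for all $\gamma \in \Gamma$, or $\minval_\varphi(\gamma) > 0$ for all $\gamma \in \Gamma$.

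To prove the dichotomy, I would first use Lemma~\ref{Lem:MinvalForm} to recall that $\minval_\varphi$ is piecewise linear with integer slopes $c_1, \dots, c_k$ and breakpoints $\delta_1 < \cdots < \delta_{k-1}$. Since $\Gamma$ is divisible, $\Q\Gamma = \Gamma$, so the breakpoints lie in $\Gamma$ itself and every value is actually attained. The key observations are: (1) Because $\varphi \in \IntR(K,D)$, for all but finitely many $\gamma \in \Gamma$ we have $\minval_\varphi(\gamma) = v(\varphi(t)) \geq 0$ for suitable $t$ with $v(t)=\gamma$; combined with piecewise linearity this forces $\minval_\varphi(\gamma) \geq 0$ for \emph{all} $\gamma \in \Gamma$. (2) I claim all the slopes $c_i$ are zero, i.e. $\minval_\varphi$ is constant. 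Indeed if some slope were nonzero, then since $\Gamma$ is divisible (hence the value group is non-discrete, or at least one can push $\gamma$ off to $\pm\infty$ within $\Gamma$), the function $c_i\gamma + \beta_i$ would take negative values for $\gamma$ in an appropriate direction — contradicting (1). So $\minval_\varphi \equiv \beta$ for a single constant $\beta \in \Gamma$ with $\beta \geq 0$. It remains to rule out $\beta = 0$ while $\varphi \notin$ (the part with $\minval = 0$), i.e. to show $\minval_\varphi(0)$ cannot be zero unless it is zero everywhere — but we just showed it \emph{is} constant, so the dichotomy reduces to: $\beta = 0$ or $\beta > 0$, which is trivially true.

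Wait — that last step shows the dichotomy is automatic once we know $\minval_\varphi$ is constant, so the real work is step (2), showing the slopes vanish, and this is where the residue-field hypotheses must enter. The subtlety: divisibility of $\Gamma$ alone gives that $\minval_\varphi$ is constant on $\Gamma$ only if $\Gamma \neq 0$; more importantly, the argument "nonzero slope forces negative values" needs that $\Gamma$ contains arbitrarily large and arbitrarily small elements relative to the breakpoints, which divisibility does supply unless $\Gamma$ is trivial. So I expect the residue-field hypotheses ($L/M$ not purely inseparable of finite exponent, $L$ infinite) are needed for a different reason — namely to control $\varphi$ where the minimal valuation function's naive prediction fails, or to handle the finitely many exceptional $\gamma$. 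Concretely, using Lemma~\ref{Lem:PowersOfLoc}, the slopes near a point $\alpha$ are computed from the local polynomials $\loc_{f,t}, \loc_{g,t} \in L[x]$, and the condition $\varphi(d) \in D$ for $d \in K$ with $v(d) = \alpha$ translates (after reducing mod $\m$) into the reduced rational function $\overline{\loc_{f,t}}/\overline{\loc_{g,t}}$ mapping $L$ into $M$; Lemma~\ref{Lem:NoRatMapToSubfield2} then forces this reduced rational function to be constant, which pins down $i_r - j_s = i_1 - j_1 = 0$, i.e. $c = c' = 0$ at every breakpoint, so $\minval_\varphi$ has all slopes zero.

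Therefore the proof structure is: (i) invoke Lemma~\ref{Lem:MinvalForm} for the piecewise-linear form and the generic agreement $v(\varphi(t)) = \minval_\varphi(v(t))$, using that $L$ infinite gives the strong attainment statement; (ii) for each candidate slope $c_i$ (equivalently each breakpoint $\alpha$, using $\Gamma$ divisible so $\alpha \in \Gamma$), pick $t \in K$ with $v(t) = \alpha$ realizing the minval, pass to local polynomials, and observe that $\IntR(K,D)$-membership forces the reduced local rational function $L \to M$ to be defined at all but finitely many points of $L$; (iii) apply Lemma~\ref{Lem:NoRatMapToSubfield2} to conclude this map is constant, hence by Lemma~\ref{Lem:PowersOfLoc} the adjacent slopes are equal, and iterating, $\minval_\varphi$ is globally constant; (iv) since $\varphi \in \IntR(K,D)$ gives $\minval_\varphi \geq 0$ generically and hence everywhere, the constant value $\beta$ satisfies $\beta \geq 0$, so $\minval_\varphi(\gamma) = 0$ for all $\gamma$ or $\minval_\varphi(\gamma) = \beta > 0$ for all $\gamma$; (v) apply Lemma~\ref{Lem:MinvalDichotomyImpliesLocal} to conclude $\IntR(K,D)$ is local with maximal ideal $\IntR(K,\m)$. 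The main obstacle is step (ii)–(iii): correctly extracting, from the integer-valuedness hypothesis, a rational function $L(x) \to M$ to which Lemma~\ref{Lem:NoRatMapToSubfield2} applies, and making sure the "finitely many exceptions" in that lemma are compatible with the "all but finitely many $\gamma$" exceptions in Lemma~\ref{Lem:MinvalForm}.
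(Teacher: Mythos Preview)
Your outline identifies the right target (the dichotomy hypothesis of Lemma~\ref{Lem:MinvalDichotomyImpliesLocal}) and the right tools (local polynomials, Lemma~\ref{Lem:PowersOfLoc}, Lemma~\ref{Lem:NoRatMapToSubfield2}), but step~(ii) contains a genuine gap, and the conclusion you draw from it --- that $\minval_\varphi$ is globally constant --- is in fact false.

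The problem is this: for $t$ with $v(t)=\alpha$ and a unit $u$, one has
\[
\varphi(tu)\ \equiv\ \Bigl(\tfrac{a_{i_r}}{b_{j_s}}\,t^{\,i_r-j_s}\ \bmod\ \m\Bigr)\cdot \frac{\loc_{f,t}}{\loc_{g,t}}(\bar u)\pmod{\m},
\]
and the scalar in front has valuation exactly $\minval_\varphi(\alpha)$. When $\minval_\varphi(\alpha)=0$ this scalar is a unit of $V$, so the condition $\varphi(tu)\in D$ forces the displayed product into $M$ and Lemma~\ref{Lem:NoRatMapToSubfield2} applies. But when $\minval_\varphi(\alpha)>0$ the scalar lies in $\m$, the whole expression reduces to $0$, and you learn nothing about $\loc_{f,t}/\loc_{g,t}$. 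Hence you cannot run the argument at an arbitrary breakpoint. Concretely, take $\Gamma=\Q$, $v(t)=1$, $L$ any infinite field in which $-1$ is not a square, and any proper subfield $M$ (characteristic~$0$, say). Then $\varphi(x)=t^3x/(t^2+x^2)$ lies in $\IntR(K,\m)\subseteq\IntR(K,D)$, yet $\minval_\varphi(\gamma)=2+|\gamma-1|$ is not constant.

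The paper does not attempt to prove constancy. It argues the dichotomy directly by contradiction: if $\minval_\varphi(\delta)=0$ and $\minval_\varphi(\delta')>0$ for some $\delta<\delta'$, then by the piecewise-linear form (and $\Gamma=\Q\Gamma$ by divisibility) there is a breakpoint $\alpha$ with $\minval_\varphi(\alpha)=0$ at which the slope jumps from $0$ to some $n\neq 0$. At \emph{this} $\alpha$ your step~(ii) is valid --- the scalar is a unit --- so Lemma~\ref{Lem:NoRatMapToSubfield2} makes $\loc_{f,t}/\loc_{g,t}$ constant, and Lemma~\ref{Lem:PowersOfLoc} gives $n=0$, a contradiction. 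Your steps (i), (iv), (v) are fine; the repair is simply to restrict (ii)--(iii) to this single breakpoint on the boundary of the zero set of $\minval_\varphi$ rather than to all breakpoints.
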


\begin{proof}
	Let $v$ be a valuation associated with $V$. Take $\varphi \in \IntR(K,D)$. Write $\varphi = \frac{f}{g}$ with $f, g \in D[x]$. Take $\gamma \in \Gamma$ to be some element and $t \in K$ such that $v(t) = \gamma$. Then $\loc_{f, t}, \loc_{g, t} \in L[x]$. We want to show that $\frac{\loc_{f, t}}{\loc_{g, t}}$ maps $L$ to $M$. Write $f = \sum\limits_i a_i x^i$ and $g = \sum\limits_j b_j x^j$ with $a_i, b_j \in D$. Pick out all the indices $i_1 < \cdots < i_r$ such that $v(a_{i_1}t^{i_1}) = \cdots = v(a_{i_r}t^{i_r}) = \minval_f(\gamma)$, and similarly pick out all the indices $j_1 < \cdots < j_s$ such that $v(b_{j_1}t^{j_1}) = \cdots = v(b_{j_s}t^{j_s}) = \minval_g(\gamma)$. We write
	\[
	\frac{b_{j_s}}{a_{i_r}}t^{j_s-i_r}\varphi(tx) = \frac{f(tx)/(a_{i_r}t^{i_r})}{g(tx)/(b_{j_s}t^{j_s})} = \frac{\frac{a_{i_1}t^{i_1}x^{i_1} + \cdots + a_{i_r}t^{i_r}x^{i_r} + \sum\limits_{i\neq i_1, \dots, i_r}a_ix^i}{a_{i_r}t^{i_r}}}{\frac{b_{j_1}t^{j_1}x^{j_1} + \cdots + b_{j_s}t^{j_s}x^{j_s} + \sum\limits_{j\neq j_1, \dots, j_s}b_ix^j}{b_{j_s}t^{j_s}}}.
	\]
	Let $c \in L$ such that $c$ is not a root of neither $\loc_{f, t}$ nor $\loc_{g, t}$. Note that all but finitely many elements of $L$ satisfy this condition. Take $u \in V$ such that $u + \m = c$. Then
	\[
	\frac{f(tu)}{a_{i_r}t^{i_r}} \mod \m = \frac{a_{i_1}t^{i_1}u^{i_1} + \cdots + a_{i_r}t^{i_r}u^{i_r} + \sum\limits_{i\neq i_1, \dots, i_r}a_iu^i}{a_{i_r}t^{i_r}} \mod \m = \loc_{f, t}(c) \neq 0.
	\]
	Similarly,
	\[
	\frac{g(tu)}{b_{j_s}t^{j_s}} \mod \m = \loc_{g, t}(c) \neq 0. 
	\]
	Therefore,
	\[
	\varphi(tu) \mod \m = \left(\frac{a_{i_r}}{b_{j_s}}t^{i_r-j_s} \mod \m \right) \frac{\loc_{f, t}}{\loc_{g, t}}(c). 
	\]
	Whenever $\minval_\varphi(\gamma) = 0$, we have that $v\left(\frac{a_{i_r}}{b_{j_s}}t^{i_r-j_s} \right) = 0$, so $\left(\frac{a_{i_r}}{b_{j_s}}t^{i_r-j_s} \mod \m \right) \neq 0$. Since $\varphi(tu) \in D$, we obtain that $\left(\frac{a_{i_r}}{b_{j_s}}t^{i_r-j_s} \mod \m \right)\frac{\loc_{f, t}}{\loc_{g, t}}(c) \in M$. This holds for all but finitely many $c \in L$, so $\left(\frac{a_{i_r}}{b_{j_s}}t^{i_r-j_s} \mod \m \right)\frac{\loc_{f, t}(x)}{\loc_{g, t}(x)}$ is constant by Lemma \ref{Lem:NoRatMapToSubfield2}. Moreover, $\frac{\loc_{f, t}(x)}{\loc_{g, t}(x)}$ is constant. We claim that this shows that the existence of some $\gamma \in \Gamma$ such that $\minval_\varphi(\gamma) = 0$ implies that $\minval_\varphi = 0$. If there exist $\delta, \delta' \in \Gamma$ such that $\minval_\varphi(\delta) = 0$ and $\minval_\varphi(\delta') > 0$, we can assume without loss of generality that $\delta < \delta'$. Then there exist $\alpha, \beta, \varepsilon \in \Gamma$ and $n \in \Z\setminus\{0\}$ such that
	\[
	\minval_\varphi(\gamma) = \begin{cases}
		n\gamma + \beta, & \alpha \leq \gamma \leq \alpha + \varepsilon \\
		0, & \alpha-\varepsilon \leq \gamma \leq \alpha
	\end{cases}
	\]
	by Lemma \ref{Lem:MinvalForm}. Note that $\alpha \in \Gamma$ because $\Gamma$ is divisible. Now take $t \in K$ such that $v(t) = \alpha$. The fact that $\frac{\loc_{f, t}(x)}{\loc_{g, t}(x)}$ is constant implies that $n-0 = 0$, by Lemma \ref{Lem:PowersOfLoc}. This is a contradiction. Thus, Lemma \ref{Lem:MinvalDichotomyImpliesLocal} implies that $\IntR(K,D)$ is local with maximal ideal $\IntR(K,\m)$.

\end{proof}

Note that $\IntR(K,D)$ is not trivial in this case. As an example, for any rational function $\varphi \in \IntR(K,V)$ and $d \in \m$, we have $d\varphi \in \IntR(K,\m) \subseteq \IntR(K,D)$. 

The following proposition shows that without the conditions on the extension of residue fields of Proposition \ref{Prop:PVDConditionsMakeIntRLocal}, the ring $\IntR(K,D)$ can be not local.

\begin{proposition}\label{Prop:PVDConditionsMakeIntRNotLocal}
	Let $D$ be a PVD with associated valuation domain $V \neq D$ and shared maximal ideal $\m$. Set $M: = D/\m$ and $L:= V/\m$. Suppose that $L$ is finite or $L/M$ is purely inseparable of finite exponent. Then $\IntR(K,D)$ is not local.
\end{proposition}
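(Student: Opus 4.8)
The plan is to construct, under either hypothesis on $L$, an explicit integer-valued rational function over $D$ that is neither a unit nor in the maximal-ideal-candidate $\IntR(K,\m)$, thereby exhibiting two distinct maximal ideals of $\IntR(K,D)$. The starting point is the extra structure each hypothesis gives us: if $L$ is finite, then $L = \F_q$ for some prime power $q$ and the polynomial $x^q - x$ vanishes on all of $L$; if $L/M$ is purely inseparable of finite exponent $p^e$, then the map $c \mapsto c^{p^e}$ sends all of $L$ into $M$. In both cases we have a nonconstant polynomial (or power map) $h \in L[x]$ that carries $L$ into the subfield $M$ but is not itself constant --- exactly the phenomenon that Lemma \ref{Lem:NoRatMapToSubfield2} forbids in the complementary case. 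We lift $h$ to a polynomial $\tilde h \in V[x]$ (for instance $\tilde h(x) = x^q - x$, resp.\ $\tilde h(x) = x^{p^e}$), and then use it to build a rational function whose behaviour on residues we can control.

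Concretely, I would first treat the function $\varphi(x) = \tilde h(x)/(1 + \tilde h(x)^2)$ or a similar bounded modification, checking via $\minval$ and the local-polynomial machinery (Lemmas \ref{Lem:MinvalForm} and \ref{Lem:PowersOfLoc}) that $\varphi \in \IntR(K,D)$: at each $\gamma \in \Gamma$ the local polynomials of numerator and denominator of $\varphi$ at a $t$ with $v(t)=\gamma$ will be (up to scaling) related to $h$ or to a constant, and since $h$ maps $L$ into $M$, the value $\varphi(tu) \bmod \m$ lands in $M$ for all but finitely many residues $c = u + \m$, so $\varphi(tu) \in D$ after clearing the finitely many bad residues by the usual valuation-domain argument. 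The key point to extract is that $\varphi$ is integer-valued over $D$ precisely \emph{because} of the finite-exponent / finite-field hypothesis --- this is where the proposition diverges from Proposition \ref{Prop:PVDConditionsMakeIntRLocal}.

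Next I would show $\IntR(K,D)$ is not local by producing an element that is neither a unit nor in $\IntR(K,\m)$. The natural candidate is something like $\psi(x) = 1/(1 + \tilde h(x))$ together with $\tilde h(x)/(1+\tilde h(x))$, or more simply: find a point $a \in K$ (for instance $a$ with $v(a) = 0$ chosen so that $h$ evaluated at its residue is nonzero, versus $a$ with residue a root of $h$) at which $\varphi$ takes a value in $\m$, and another point at which it takes a unit value; then $\varphi \in \M_{\m,a}$ for the first but $\varphi \notin \IntR(K,\m)$, and $1-\varphi$ or a companion function witnesses that $\M_{\m,a}$ is a proper maximal ideal distinct from $\IntR(K,\m)$. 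Equivalently, and perhaps cleanest: exhibit $\varphi \in \IntR(K,D)$ with $\varphi \notin \IntR(K,\m)$ and $\varphi$ a non-unit (its value at some $a\in D$ lies in $\m$), which immediately forces $\IntR(K,D)$ to have more than one maximal ideal since $\IntR(K,\m)$ cannot be the unique one.

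\textbf{Main obstacle.} The delicate part is the verification that the constructed $\varphi$ is genuinely integer-valued over $D$ (not merely over $V$) at \emph{every} $\gamma \in \Gamma$ and every residue, i.e.\ running the local-polynomial argument of Proposition \ref{Prop:PVDConditionsMakeIntRLocal} in reverse: there, constancy of $\loc_{f,t}/\loc_{g,t}$ was deduced from Lemma \ref{Lem:NoRatMapToSubfield2}; here we must instead directly confirm that even though $\loc_{f,t}/\loc_{g,t}$ is \emph{nonconstant}, it still maps $L$ into $M$, using the explicit form of $h$. I also need to make sure the divisibility (or lack thereof) of $\Gamma$ does not interfere --- since we are proving non-locality, we should not need $\Gamma$ divisible, but I must check the $\minval$ computations do not secretly require it. Handling the finitely many exceptional residues $c \in L$ at each level $\gamma$ uniformly (so that the resulting $\varphi$ works for all $a \in K$ simultaneously) is the routine-but-fiddly bookkeeping that the bounded denominator $1 + \tilde h^2$ is designed to absorb.
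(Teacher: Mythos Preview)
Your overall strategy---construct an explicit $\varphi\in\IntR(K,D)$ that lies in some $\M_{\m,a}$ but not in others---is exactly the paper's, and in the finite case your candidate $\psi=1/(1+\tilde h)$ with $\tilde h=x^q-x$ is precisely the paper's function $1/(x^q-x+1)$. Note, though, that your first candidate $\tilde h/(1+\tilde h^2)$ does not separate anything in the finite case: since $x^q-x$ vanishes identically on $L=\F_q$, one has $\tilde h(a)\in\m$ for every $a\in V$, and a short valuation check shows $\tilde h(a)/(1+\tilde h(a)^2)\in\m$ for \emph{every} $a\in K$, so this $\varphi$ already sits inside $\IntR(K,\m)$ and witnesses nothing. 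Your plan to ``find $a$ with $v(a)=0$ such that $h$ at its residue is nonzero'' is vacuous here.

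The genuine gap is in the purely inseparable case. With $\tilde h(x)=x^{p^e}$, neither $1/(1+\tilde h)$ nor $\tilde h/(1+\tilde h^2)$ belongs to $\IntR(K,D)$: for $a\in V^\times$ with residue $\bar a^{p^e}=-1$ (take $\bar a=-1$ when $p$ is odd, $\bar a=1$ when $p=2$) the denominator lands in $\m$ while the numerator is a unit, so the value has negative valuation. What you are missing is a denominator polynomial whose reduction has coefficients in $M$ and \emph{no roots at all} in $L$. The paper manufactures this from the hypothesis $V\neq D$: pick $c\in L\setminus M$; then $x^{p^e}-c$ has no root in $L$ (a root $\alpha$ would give $\alpha^{p^e}=c\notin M$, contradicting the finite-exponent hypothesis), and raising to the $p^e$-th power yields $x^{p^{2e}}-c^{p^e}$, still rootless in $L$ but now with coefficients in $M$. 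Lifting $c$ to $u\in V$, the function $1/(x^{p^{2e}}-u^{p^e})$ lies in $\IntR(K,D)$, is congruent to an element of $M^\times$ at every $a$ with $v(a)\geq 0$, and lies in $\m$ at every $a$ with $v(a)<0$. Once the right function is in hand the verification is a two-line case split on the sign of $v(a)$; the $\minval$/local-polynomial machinery is unnecessary and your concern about divisibility of $\Gamma$ never arises.
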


\begin{proof}
	If $L$ is finite of order $q$, then $x^q - x + 1$ maps $L$ to $\{1\} \subseteq M$. We claim then that $\frac{1}{x^q-x+1} \in \IntR(K,D)$. If $a \in K$ with $v(a) < 0$, then $v\left(\frac{1}{a^q-a+1}\right) = -qv(a) > 0$. If $a \in K$ with $v(a) \geq 0$, then $\frac{1}{a^q-a+1} \in 1 + \m$, so $\frac{1}{a^q-a+1} \in D$. We see then that $\frac{1}{x^q-x+1} \in \mathfrak{M}_{\m, a} \subseteq \IntR(K,D)$ for all $a \in K$ with $v(a) < 0$ and $\frac{1}{x^q-x+1} \notin \mathfrak{M}_{\m, a} \subseteq \IntR(K,D)$ for all $a \in K$ with $v(a) \geq 0$. This means $\IntR(K,D)$ is not local.
	
	Now consider the case when $L/M$ is purely inseparable of finite exponent. This means that there exists $e \in \N$ such that $a^{p^e} \in M$ for all $a \in L$, where $p > 0$ is the characteristic of $M$. Since $V \neq D$, we have that $L \neq M$. Let $c \in L\setminus M$. Then the polynomial $x^{p^e} - c$ has no roots in $L$. Additionally, the polynomial $(x^{p^e} - c)^{p^e} = x^{p^{2e}} - c^{p^e}$ also has no roots in $L$ and has coefficients in $M$. Consider the rational function $\frac{1}{x^{p^{2e}} - u^{p^e}}$, where $u +\m = c$. Let $a \in K$ such that $v(a) < 0$. Then $v\left(\frac{1}{a^{p^{2e}}-u^{p^e}} \right) = -p^{2e}v(a)>0$. If $a \in K$ is such that $v(a) \geq 0$, then since $x^{p^{2e}} - c^{p^e}$ has no roots in $L$, we calculate that $v\left(\frac{1}{a^{p^{2e}}-u^{p^e}} \right) = 0$. Furthermore, $a^{p^{2e}}-u^{p^e} + \m \in M$, so $\frac{1}{a^{p^{2e}}-u^{p^e}} \in D$. This shows that $\frac{1}{x^{p^{2e}} - u^{p^e}} \in \IntR(K,D)$. We have $\frac{1}{x^{p^{2e}} - u^{p^e}} \in \M_{ \m, a} \subseteq \IntR(K,D)$ for all $a \in K$ with $v(a) < 0$ and $\frac{1}{x^{p^{2e}} - u^{p^e}} \notin \M_{ \m, a} \subseteq \IntR(K,D)$ for all $a \in K$ with $v(a) \geq 0$. Thus, $\IntR(K,D)$ is not local in this case. 
\end{proof}

In conclusion, for a PVD $D$ that is not a valuation domain with field of fractions $K$, we can determine exactly when $\IntR(K,D)$ is a local domain. 

\begin{corollary}
	Let $D$ be a PVD with associated valuation domain $V$, maximal ideal $\m$, and field of fraction $K$. Suppose that $D \neq V$ and set $M \coloneqq D/\m$ and $L \coloneqq V/\m$. Then $\IntR(K,D)$ is a local domain if and only if $L/M$ is not purely inseparable of finite exponent and $L$ is infinite. 
\end{corollary}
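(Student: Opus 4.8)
The plan is to obtain the corollary by assembling Propositions~\ref{Prop:PVDConditionsMakeIntRLocal} and~\ref{Prop:PVDConditionsMakeIntRNotLocal}, which between them furnish the two implications of the equivalence. Since $D \neq V$ is part of the standing hypotheses, both propositions are directly applicable, and the argument is essentially bookkeeping once the hypotheses are lined up.

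For the ``only if'' direction I would argue contrapositively. Suppose that $L$ is finite, or that $L/M$ is purely inseparable of finite exponent. Then the hypotheses of Proposition~\ref{Prop:PVDConditionsMakeIntRNotLocal} are met, so $\IntR(K,D)$ is not local. Consequently, if $\IntR(K,D)$ is local, then $L$ must be infinite and $L/M$ must fail to be purely inseparable of finite exponent. This half uses nothing about the value group of $V$.

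For the ``if'' direction I would assume $L$ is infinite and $L/M$ is not purely inseparable of finite exponent, and invoke Proposition~\ref{Prop:PVDConditionsMakeIntRLocal} to conclude that $\IntR(K,D)$ is local, in fact with maximal ideal $\IntR(K,\m)$; this completes the equivalence. The point that requires care here --- and the main obstacle --- is that Proposition~\ref{Prop:PVDConditionsMakeIntRLocal} carries the additional hypothesis that the value group $\Gamma$ of $V$ is divisible, which the corollary does not record. To close this one must either read the statement in the divisible setting, which is the case of genuine interest (cf.\ the discussion at the end of Section~\ref{Sect:Non-singular}), or separately verify that the local-polynomial argument of Proposition~\ref{Prop:PVDConditionsMakeIntRLocal} still forces the required dichotomy ``$\minval_\varphi \equiv 0$ on $\Gamma$ or $\minval_\varphi > 0$ on $\Gamma$'' when $\Gamma$ is not divisible; that argument already shows, with no appeal to divisibility, that at any $\gamma \in \Gamma$ with $\minval_\varphi(\gamma) = 0$ the two one-sided slopes of $\minval_\varphi$ vanish, so the delicate remaining case is a breakpoint of $\minval_\varphi$ lying in $\Q\Gamma \setminus \Gamma$ and separating the vanishing locus of $\minval_\varphi$ from its positive locus, which is precisely what divisibility of $\Gamma$ is used to rule out.
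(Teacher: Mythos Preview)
Your approach is exactly the paper's: the corollary is stated without proof and is meant to be read as the conjunction of Proposition~\ref{Prop:PVDConditionsMakeIntRLocal} (for the ``if'' direction) and Proposition~\ref{Prop:PVDConditionsMakeIntRNotLocal} (for the ``only if'' direction, by contrapositive).

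You have, however, put your finger on a real discrepancy. Proposition~\ref{Prop:PVDConditionsMakeIntRLocal} assumes that the value group $\Gamma$ is divisible, while the corollary as stated does not. The paper does not address this gap: there is no separate argument offered for non-divisible $\Gamma$, and the proof of Proposition~\ref{Prop:PVDConditionsMakeIntRLocal} genuinely uses divisibility at the point you identify --- to guarantee that a breakpoint $\alpha$ of $\minval_\varphi$ lies in $\Gamma$ (rather than merely $\Q\Gamma$), so that one can choose $t \in K$ with $v(t) = \alpha$ and invoke the constancy of $\loc_{f,t}/\loc_{g,t}$ there. Your diagnosis of the obstruction (a breakpoint in $\Q\Gamma \setminus \Gamma$ separating the zero locus from the positive locus) is accurate. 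So either the corollary should carry the divisibility hypothesis, or an additional argument handling breakpoints outside $\Gamma$ is needed; the paper supplies neither.
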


\section{Rational functions as maps between fields}\label{Sect:FieldMaps}
\indent\indent Let $L/M$ be an extension of fields. This section leads up to a lemma showing
the nonexistence of a nonconstant rational function that maps L to M, except if L
is finite or $L/M$ is a particular type of purely inseparable extension. This lemma is used for Proposition \ref{Prop:PVDConditionsMakeIntRLocal}. 

First we discuss polynomials with coefficients in the base field.

\begin{lemma}\label{Lem:NoPolyMapToSubfield}
	Let $L/M$ be a field extension that is not purely inseparable of finite exponent. Additionally, suppose that $L$ is an infinite field. Then there does not exist a nonconstant polynomial $f \in M[x]$ such that $f(d) \in M$ for all but finitely many $d \in L$. 
\end{lemma}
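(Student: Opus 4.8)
The plan is to argue by contradiction. Suppose $f \in M[x]$ is nonconstant of degree $n$, with coefficients $a_0,\dots,a_n \in M$, and $f(d) \in M$ for all $d$ in $L \setminus F$ with $F$ finite; I will derive that $L/M$ must be purely inseparable of bounded exponent (and in characteristic zero that $L = M$), contradicting the standing hypothesis on $L/M$. I begin with \textbf{two reductions}. First, $M$ is infinite: if $|M| = q < \infty$, every $d \in L$ with $f(d) \in M$ is a root of the nonzero polynomial $\prod_{m \in M}\bigl(f(x) - m\bigr)$, of degree $nq$, so only finitely many such $d$ exist --- impossible, since cofinitely many $d \in L$ qualify and $L$ is infinite. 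Second, I would upgrade the hypothesis to the identity $f(x + \alpha) \in M[x]$ for \emph{every} $\alpha \in L$: fixing $\alpha$, the set $\{\,m \in M : \alpha + m \in F\,\}$ is finite, so there are $n+1$ distinct $m_0,\dots,m_n \in M$ with $f(\alpha + m_i) \in M$; the polynomial $g(y) := f(\alpha + y)$ lies in the domain $M[\alpha][y]$, has degree $n$ and leading coefficient $a_n$, and subtracting from $g$ the Lagrange interpolant in $M[y]$ through the data $(m_i, f(\alpha+m_i))$ yields a polynomial over $M[\alpha]$ of degree $\le n$ vanishing at $n+1$ distinct points, hence the zero polynomial. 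So $g \in M[y]$, i.e.\ $f(x+\alpha) \in M[x]$.

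\textbf{Extraction via interpolation and Lucas.} Write $f(x+\alpha) = \sum_{k=0}^n \Phi_k(\alpha)\,x^k$ with $\Phi_k(T) = \sum_{i=k}^{n} \binom{i}{k} a_i\, T^{\,i-k} \in M[T]$. Applying the previous identity with $c\alpha$ in place of $\alpha$ for each $c \in M$ gives $\Phi_k(c\alpha) \in M$ for all $c \in M$. Regarding $c \mapsto \Phi_k(c\alpha) = \sum_{i} \binom{i}{k} a_i \alpha^{\,i-k} c^{\,i-k}$ as a polynomial in $c$ with coefficients in $M[\alpha]$ that takes values in the infinite set $M$, interpolation forces each coefficient into $M$; hence $\binom{i}{k} a_i\, \alpha^{\,i-k} \in M$ for all $0 \le k \le i \le n$, and in particular $\alpha^{\,i-k} \in M$ for all $\alpha \in L$ whenever $\binom{i}{k} a_i \ne 0$. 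Now for any $i \ge 1$ with $a_i \ne 0$, let $p^{v}$ be the exact power of the characteristic $p$ dividing $i$ (take $v = 0$ in characteristic $0$) and set $k = i - p^{v}$; Lucas's theorem gives $\binom{i}{p^{v}} \equiv i/p^{v} \not\equiv 0 \pmod p$, so $\alpha^{p^{v}} \in M$ for all $\alpha \in L$.

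\textbf{Conclusion.} The set $G$ of integers $m \ge 1$ with $\alpha^m \in M$ for all $\alpha \in L$ is closed under addition and --- because $L$ is a field containing the subfield $M$ --- under positive differences, so $G$ is the set of positive multiples of $d := \gcd(G)$. By the previous step $G$ contains $p^{v_p(i)}$ for every $i \ge 1$ with $a_i \ne 0$, so $d$ divides $p^{e}$ where $e := \min\{\,v_p(i) : a_i \ne 0\,\}$; hence $\alpha^{p^{e}} \in M$ for all $\alpha \in L$. Thus every element of $L$ is purely inseparable over $M$ of exponent at most $e$, i.e.\ $L/M$ is purely inseparable of finite exponent --- and in characteristic zero this reads $L = M$ (the trivial extension, of exponent $0$). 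Either way this contradicts the hypothesis, completing the proof.

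\textbf{Anticipated main obstacle.} The real content is the passage from the soft hypothesis (integer-valuedness on a cofinite subset of $L$) to the rigid structural conclusion, and it hinges on two maneuvers: promoting the hypothesis to the polynomial identity $f(x+\alpha)\in M[x]$, so that each coefficient becomes a genuine identity one can feed to an interpolation argument, and the Lucas-congruence choice $k = i - p^{v_p(i)}$, which is exactly what forces the controlled exponents to be \emph{powers of $p$} (the gcd of the exponents actually occurring in $f$ need not be a $p$-power, so a cruder approach does not close). I expect the extraction step --- keeping the binomial-coefficient bookkeeping and the interpolation argument clean --- to be the delicate part, and I would take care to use the field structure of $L$, not merely that it is an infinite set, in the closure argument of the final step.
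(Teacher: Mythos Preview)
Your proof is correct and takes a genuinely different route from the paper's. Both arguments dispose of the finite-$M$ case the same way and both ultimately invoke Lucas's theorem, but the core mechanisms differ.

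The paper works with the \emph{dilation} $f(\alpha x)$: for $\alpha\in L\setminus M$ with $f(\alpha)\in M$ it first observes $\alpha$ is algebraic, expands $f(\alpha x)$ in the basis $1,\alpha,\dots,\alpha^{n-1}$, and reads off that $\alpha^m\in M$ where $m=\deg f$. This reduces the problem to the monomial case $x^m$; a minimal-degree argument on $(x+1)^m-x^m$ then forces all $\binom{m}{i}$ to vanish for $0<i<m$, and Lucas pins $m$ down as a prime power.

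You instead work with the \emph{translation} $f(x+\alpha)$: the Lagrange-interpolation upgrade to $f(x+\alpha)\in M[x]$ for every $\alpha\in L$ is a clean maneuver that sidesteps any preliminary algebraicity argument, and the second interpolation (in the scaling parameter $c$) isolates each term $\binom{i}{k}a_i\alpha^{i-k}$ individually. Your use of Lucas is also different in flavor: rather than forcing a single degree to be a prime power via a minimal-counterexample step, you pick, for each nonzero $a_i$, the exponent $i-k=p^{v_p(i)}$ directly. The numerical-semigroup closure argument then replaces the paper's minimality induction.

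Your approach is arguably more streamlined---no reduction to monomials, no induction on degree---while the paper's has the conceptual appeal of the intermediate statement ``$x^m$ already maps $L$ into $M$.'' Both are complete.
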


\begin{proof}
	Suppose first that $M = \{d_1, \dots, d_k\}$ is finite. Also assume the existence of $f \in M[x]$ such that $f$ maps all but finitely many elements of $L$ to $M$. Then $(f(x)-d_1)\cdots(f(x)-d_k)$ evaluates to 0 for all but finitely many elements of $L$, which is infinite, so $(f(x)-d_1)\cdots(f(x)-d_k) = 0$. This implies that $f$ is a constant. 
	
	Now we suppose that $M$ is infinite. Suppose for a contradiction that there is a nonconstant polynomial $f \in M[x]$ such that $f(x) \in M$ for all but finitely many $x \in L$. Write $f(x) = a_0 + a_1x + a_2x^2 + \cdots + a_mx^m$ where each $a_i \in M$ with $a_m \neq 0$. Additionally, $m \geq 2$ since if $m = 1$, then $L = M$, but then $L/M$ would be purely inseparable of finite exponent.  
	
	Let $\alpha \in L \setminus M$ such that $f(\alpha) \in M$. Then $\alpha$ is algebraic over $M$. Suppose $\alpha$ has degree $n \geq 2$ over $M$. For $i \in \N$, we can uniquely represent $\alpha^i = \sum\limits_{j=0}^{n-1} c_{ij} \alpha^j$, where each $c_{ij}$ is in $M$. Then 
	\[
	f(\alpha x) = \sum_{i=0}^{m} a_i (\alpha x)^i = \sum_{i=0}^m a_i \sum_{j=0}^{n-1} c_{ij} \alpha^j x^i  = \sum_{j=0}^{n-1}\left(\sum\limits_{i=0}^m a_i c_{ij} x^i\right) \alpha^j.
	\]
	Evaluating $f(\alpha x)$ at $x = d$ for all but finitely many $d \in M$ gives an $M$-linear combination of $1, \alpha, \alpha^2, \dots, \alpha^{n-1}$, which by assumption is in $M$, so the coefficients of $\alpha, \alpha^2, \dots, \alpha^{n-1}$ must be 0. Since $\sum\limits_{i=0}^m a_i c_{ij} x^i$ evaluates to 0 at all but finitely many elements of the infinite field $M$ for $j=1, \dots, n-1$, we deduce that $\sum\limits_{i=0}^m a_i c_{ij} x^i = 0$ for $j=1, \dots, n-1$. This implies that $a_ic_{ij} = 0$ for $i = 0, \dots, m$ and $j=1, \dots, n-1$. Since $a_m \neq 0$, we have $c_{mj} = 0$ for all $j=1, \dots, n-1$. This means that $\alpha^m \in M$. Thus, the polynomial $x^m$ maps all but finitely many elements of $L$ to $M$. 
	
	Take $f \in M[x]$ to be a nonconstant polynomial such that $f$ maps all but finitely many elements of $L$ to $M$ and $f$ is a polynomial with minimal degree with respect to this property. From above, we know that $x^m$ maps all but finitely many elements of $L$ to $M$, where $m = \deg(f)$. We know that $(x+1)^m$ maps all but finitely many elements of $L$ to $M$ as well. This means that $(x+1)^m - x^m = \sum\limits_{i=0}^{m-1} \binom{m}{i} x^i$ maps all but finitely many elements of $L$ to $M$. Since $m$ was chosen to be minimal, we have that $\deg ((x+1)^m - x^m) = 0$ and thus $\binom{m}{i} = 0$ for all $i = 1, \dots, m-1$.
	
	Let $p$ be the characteristic of $M$. If $p = 0$, then $\binom{m}{i} = 0$ cannot happen. Thus, suppose that $p > 0$. Since $\binom{m}{1} = m = 0$, we have that $p$ divides $m$. Suppose that $m \neq p^r$ for any power $r \in \Z_{>0}$. Then when we write the base $p$ expansion $m = m_kp^k + m_{k-1}p^{k-1} + \cdots + m_1p +m_0$ with $0 \leq m_i < p-1$ and $m_k \neq 0$. Also, $m_0 = 0$ since $p$ divides $m$. Since $m \neq p^k$, we get that $m_k \geq 2$ or $m_i \neq 0$ for some $i = 1, \dots, k-1$. Either way, $m_i \neq 0$ for some $i = 1, \dots, k$ and $0 < p^i < m$. Then Lucas's Theorem says $\binom{m}{p^i} \equiv \binom{m_k}{0} \cdots \binom{m_{i+1}}{0}\binom{m_i}{1}\binom{m_{i-1}}{0} \cdots \binom{m_0}{0} \equiv m_i \not\equiv 0 \pmod p$. Thus, $\binom{m}{p^i} \neq 0 \in M$, a contradiction. Thus $m = p^r$ for some $r \in \Z_{>0}$. 
	
	However, this implies that $x^{p^r}$ maps all but finitely elements of $L$ to $M$. Let $\alpha \in L$. There exists some nonzero $c \in M$ such that $(c\alpha)^{p^r}$ because $M$ is infinite. Then $c^{p^r}\alpha^{p^r} \in M$ and thus $\alpha^{p^r} \in M$. Therefore, $x^{p^r}$ maps all elements of $L$ to $M$, meaning that $L/M$ is purely inseparable. If $L/M$ is not of finite exponent, then there exists $d \in L$ such that $e[d:M] > r$, contradicting the fact that $x^{p^r}$ maps $L$ to $M$. Thus, a nonconstant polynomial $f \in M[x]$ cannot map $L$ into $M$. 
\end{proof}

\begin{lemma}\label{Lem:NoRatMapToSubfield}
	Let $L/M$ be a field extension that is not purely inseparable of finite exponent. Additionally, suppose that $L$ is an infinite field. Then there does not exist a nonconstant rational function $\varphi \in M(x)$ such that $\varphi(d) \in M$ for all but finitely many $d \in L$. 
\end{lemma}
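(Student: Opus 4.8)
The plan is to reduce the rational-function statement to the polynomial statement already proved in Lemma~\ref{Lem:NoPolyMapToSubfield}. Assume for contradiction that $\varphi \in M(x)$ is nonconstant with $\varphi(d) \in M$ for all but finitely many $d \in L$, and write $\varphi = f/g$ with $f, g \in M[x]$ coprime. Two cases dispose of themselves at once. If $M$ is finite, then $\prod_{c \in M}\bigl(\varphi(x) - c\bigr)$ vanishes at the infinitely many points of $L$ where $\varphi$ is defined and takes a value in $M$, hence is the zero rational function, so $\varphi$ is constant --- a contradiction; thus we may assume $M$ is infinite. If $L/M$ is not algebraic, pick $t \in L$ transcendental over $M$; for each $i \ge 1$ the element $t^{i}$ cannot be a root of the nonzero polynomial $f - \varphi(t^{i})g \in M[x]$, so $\varphi(t^{i}) \notin M$, and the $t^{i}$ are infinitely many distinct elements of $L$ --- again a contradiction. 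Hence we may also assume $L/M$ is algebraic, and since $L/M$ is not purely inseparable of finite exponent we have $L \ne M$.

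The core step is to show that $\varphi(\beta x) \in M(x)$ for every $\beta \in L^{*}$. This is trivial for $\beta \in M^{*}$. For $\beta \in L \setminus M$, which is algebraic over $M$ of some degree $n \ge 2$, I write $f(\beta x) = \sum_{j=0}^{n-1} p_{j}(x)\beta^{j}$ and $g(\beta x) = \sum_{j=0}^{n-1} q_{j}(x)\beta^{j}$ with $p_{j}, q_{j} \in M[x]$, using the $M$-basis $1, \beta, \dots, \beta^{n-1}$ of $M(\beta)$. Since $d \mapsto \beta d$ is injective and $M$ is infinite, $\varphi(\beta d) \in M$ for all but finitely many $d \in M$; for such $d$ the relation $f(\beta d) = \varphi(\beta d)\,g(\beta d)$ together with the $M$-linear independence of $1, \beta, \dots, \beta^{n-1}$ gives $p_{j}(d) = \varphi(\beta d)\,q_{j}(d)$ for all $j$. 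Picking $j^{*}$ with $q_{j^{*}} \ne 0$, we get $\varphi(\beta d) = p_{j^{*}}(d)/q_{j^{*}}(d)$ for all but finitely many $d \in M$; since $M$ is infinite this forces the polynomial identities $p_{j}q_{j^{*}} = p_{j^{*}}q_{j}$, whence $\varphi(\beta x) = f(\beta x)/g(\beta x) = p_{j^{*}}(x)/q_{j^{*}}(x) \in M(x)$.

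I then extract the shape of $\varphi$. Because $f, g$ are coprime over $M$ and $x \mapsto \beta x$ is an automorphism of $\overline{M}[x]$ (here $\beta$ is algebraic, hence $\beta \in \overline{M}$), the polynomials $f(\beta x)$ and $g(\beta x)$ are coprime in $\overline{M}[x]$; comparing this coprime factorization with the one coming from a lowest-terms representation of $\varphi(\beta x)$ over $M$ shows $f(\beta x) \in \overline{M}^{*}\cdot M[x]$. Consequently every ratio of nonzero coefficients of $f(\beta x)$ lies in $M$, that is, $\beta^{\,i-i'} \in M$ whenever $x^{i}$ and $x^{i'}$ occur in $f$; and this holds for every $\beta \in L^{*}$. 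If $f$ had two distinct exponents $i > i'$, the nonconstant polynomial $x^{\,i-i'} \in M[x]$ would map all of $L$ into $M$, contradicting Lemma~\ref{Lem:NoPolyMapToSubfield}; so $f$ is a monomial. Running the same argument on $1/\varphi = g/f$ --- which is nonconstant, in lowest terms, and also sends all but finitely many elements of $L$ to $M$ --- shows $g$ is a monomial too. Hence $\varphi = c\,x^{l}$ with $c \in M^{*}$ and $l \ne 0$, and then $x^{|l|}$ maps all but finitely many elements of $L$ into $M$ (when $l < 0$ one first discards $d = 0$), contradicting Lemma~\ref{Lem:NoPolyMapToSubfield} a final time.

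The main obstacle is the core step: converting the pointwise identities, valid only on a cofinite subset of $M$, into genuine polynomial identities. This is exactly where the hypothesis that $M$ is infinite is used, and it is accompanied by some bookkeeping with coprimality and units in the larger ring $\overline{M}[x]$; everything of arithmetic substance (Lucas' theorem, the purely inseparable exponent) is already packaged inside Lemma~\ref{Lem:NoPolyMapToSubfield}, which is invoked as a black box.
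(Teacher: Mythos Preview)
Your proof is correct and takes a genuinely different route from the paper's. The paper substitutes $x \mapsto x_0 + x_1\alpha + \cdots + x_{n-1}\alpha^{n-1}$ and works with the resulting multivariable rational function; by tracking the decomposition $g(x_0 + x_1\alpha + \cdots) = \sum_j g_j \alpha^j$ it shows that $g_1 = \cdots = g_{n-1} = 0$, so the denominator $g$ itself maps each $M[\alpha]$ into $M$. It then splits into cases (choosing $\alpha$ separable when possible, and otherwise letting $[M(\alpha):M]$ grow without bound in the purely inseparable infinite-exponent case) to conclude that $g$ is constant, whence $\varphi$ is a polynomial and Lemma~\ref{Lem:NoPolyMapToSubfield} finishes. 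Your argument instead uses the simpler one-variable substitution $x \mapsto \beta x$ and exploits coprimality of $f(\beta x)$ and $g(\beta x)$ in $\overline{M}[x]$ to force $f(\beta x)\in \overline{M}^{*}\cdot M[x]$; this immediately makes every coefficient ratio $\beta^{\,i-i'}$ land in $M$, so a single appeal to Lemma~\ref{Lem:NoPolyMapToSubfield} (applied to the monomial $x^{\,i-i'}$) shows $f$ and, symmetrically, $g$ are monomials. The upshot is that your proof avoids both the multivariable machinery and the separable/inseparable case split, at the cost of a short detour through $\overline{M}[x]$ to compare coprime presentations; the paper's route stays inside $M[x_0,\dots,x_{n-1}]$ but pays with the extra case analysis.
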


\begin{proof}
	First, we will handle the case when $M = \{d_1, \dots, d_k\}$ is finite. If $\varphi \in M(x)$ such that $\varphi$ maps all but finitely many elements of $L$ to $M$, then $(\varphi-d_1) \cdots (\varphi-d_k)$ evaluates to 0 for all but finitely element of $L$, which is infinite. Therefore, $(\varphi-d_1) \cdots (\varphi-d_k) = 0$, forcing $\varphi$ to be constant.
	
	Now we assume that $M$ is infinite for here on. Suppose there exists a nonconstant $\varphi \in M(x)$ such that $\varphi(d) \in M$ for all but finitely many $d \in L$. Write $\varphi = \frac{f}{g}$ with $f, g \in M[x]$ coprime. Let $d \in L$. Since $M$ is infinite, there exists an $a \in M$ such that $\varphi(ad) \in M$. Since $ad$ is a root of $f(x) - \varphi(ad)g(x) \in M[x]$, we know that $ad$ is algebraic over $M$ and thus $d$ is algebraic over $M$, or $f(x) - \varphi(ad)g(x) = 0$. However, if $f(x) - \varphi(ad)g(x) = 0$, then $\varphi(x) = \varphi(ad)$, which is impossible since $\varphi$ is assumed to be nonconstant. This shows that $L/M$ is an algebraic field extension. 
	
	Let $\alpha \in L \setminus M$ and let $n$ be the degree of $\alpha$ over $M$. Set $\psi(x_0, x_1, \dots, x_{n-1}) \coloneqq \varphi(x_0 + x_1\alpha + x_2\alpha^2 + \cdots + x_{n-1}\alpha^{n-1}) = \frac{f(x_0 + x_1\alpha + x_2\alpha^2 + \cdots + x_{n-1}\alpha^{n-1})}{g(x_0 + x_1\alpha + x_2\alpha^2 + \cdots + x_{n-1}\alpha^{n-1})}$, where $x_0, \dots, x_{n-1}$ are indeterminates, and write
	\[
	f(x_0 + x_1\alpha + x_2\alpha^2 + \cdots + x_{n-1}\alpha^{n-1}) = f_0 + f_1\alpha +f_2\alpha^2 + \cdots + f_{n-1}\alpha^{n-1}
	\]
	and
	\[
	g(x_0 + x_1\alpha + x_2\alpha^2 + \cdots + x_{n-1}\alpha^{n-1}) = g_0 + g_1\alpha +g_2\alpha^2 + \cdots + g_{n-1}\alpha^{n-1},
	\]
	where $f_i, g_i \in M[x_0, x_1, \dots, x_{n-1}]$ since $1, \alpha, \alpha^2, \dots, \alpha^{n-1}$ are linearly independent over $M$, so $1, \alpha, \alpha^2, \dots, \alpha^{n-1}$ are linearly independent over $M(x_0, x_1, \dots, x_{n-1})$.
	
	Now we evaluate $\psi$ at $\mathbf{a} = (a_0, \dots, a_{n-1})$, where $a_0, \dots, a_{n-1} \in M$. We get that
	\[
	\frac{f_0(\mathbf{a}) + f_1(\mathbf{a})\alpha + \cdots + f_{n-1}(\mathbf{a})\alpha^{n-1}}{g_0(\mathbf{a}) + g_1(\mathbf{a})\alpha + \cdots + g_{n-1}(\mathbf{a})\alpha^{n-1}} = \psi(\mathbf{a}) =  \varphi(a_0 + a_1\alpha + \cdots + a_{n-1}\alpha^{n-1}) \in M. 
	\]
	Because $\psi(\mathbf{a})$ is in $M$ and $1, \alpha, \dots, \alpha^{n-1}$ are linear independent over $M$, for each $i = 0, 1, \dots, n-1$, we have $f_i(\mathbf{a}) = \psi(\mathbf{a})g_i(\mathbf{a})$. Then $f_i - \psi g_i$ is a rational function that evaluates to 0 for all but finitely many $\mathbf{a} \in M^n$, which means that $f_i - \psi g_i = 0$. Rearranging yields $f_i = \psi g_i$ for $i = 0, 1, \dots, n-1$. 
	
	Consider $g(x_0 + x_1\alpha + x_2\alpha^2 + \cdots + x_{n-1}\alpha^{n-1}) = g_0 + g_1\alpha +g_2\alpha^2 + \cdots + g_{n-1}\alpha^{n-1}$. We then have $g(x) = g_0(x, 0, \dots, 0) + g_1(x,0,\dots, 0)\alpha + \cdots + g_{n-1}(x,0,\dots,0)\alpha^{n-1} \in M[x]$. Thus, $g(x) = g_0(x,0,\dots, 0)$. In particular, $g_0$ is not identically zero. Using $f_0 = \psi g_0$ from before, we conclude that $\frac{f_0}{g_0} = \psi$. This implies that
	\[
	\frac{f_0 + f_1\alpha +f_2\alpha^2 + \cdots + f_{n-1}\alpha^{n-1}}{g_0 + g_1\alpha +g_2\alpha^2 + \cdots + g_{n-1}\alpha^{n-1}}  = \frac{f_0}{g_0}. 
	\]
	Cross-multiplying and subtracting yields
	\[
	(f_1g_0 - f_0g_1)\alpha + \cdots + (f_{n-1}g_0 - f_0 g_{n-1})\alpha^{n-1}= 0
	\]
	Suppose for a contradiction that there exists $i \in \{1, 2, \dots, n-1\}$ such that $g_i \neq 0$. Then $f_i g_0 - f_0 g_i = 0$ implies that $\frac{f_0}{g_0} = \frac{f_i}{g_i}$. Thus, for all but finitely many $d \in M$, we have $\varphi(d) = \frac{f_0}{g_0}(d, 0, \dots, 0) = \frac{f_i}{g_i}(d, 0, \dots, 0) = 0$. The last equality is due to the fact that $f(x) \in M[x]$ implies $f_j(x, 0, \dots, 0) = 0$ for all $j \in \{1,2,\dots, n-1\}$. This forces $\varphi = 0$, but we assumed that $\varphi$ is nonconstant, so this is a contradiction. Therefore, $g_1 = \cdots = g_{n-1} = 0$. This means $g(x_0 + x_1\alpha + x_2\alpha^2 + \cdots + x_{n-1}\alpha^{n-1}) = g_0(x_0, \dots, x_{n-1})$ and thus $g(d)$ is a polynomial with coefficients in $M$ such that $g(d) \in M$ for all but finitely many $d \in M[\alpha]$.
	
	If $L/M$ is not purely inseparable, we can choose $\alpha \in L$ to be separable so by Lemma \ref{Lem:NoPolyMapToSubfield}, the polynomial $g$ is constant. If $L/M$ is purely inseparable of infinite exponent, then $g$ must be constant as well, since otherwise, $\deg g > 0$ implies that $\deg g \geq [M(\alpha): M] = p^{e[\alpha:M]}$ for each $\alpha \in L$, which is unbounded. In both cases, $\varphi$ is a polynomial in $M[x]$ such that $\varphi(d) \subseteq M$ for all but finitely many $d \in L$, but applying Lemma \ref{Lem:NoPolyMapToSubfield} again shows that $\varphi$ must be constant, a contradiction.
\end{proof}

Our goal now is to upgrade the previous lemma to the nonexistence of such a rational function in $L(x)$. 

\begin{proposition}\label{Prop:FieldExtensionUnnecessary}\cite[Proposition X.1.4]{Cahen}
	Let $D$ be a domain with field of fractions $K$, $E$ be an infinite subset of $K$, and $L$ be a field extension of $K$. If $\varphi \in L(x)$ is such that $\varphi(E) \subseteq D$, then, in fact, $\varphi \in K(x)$. 
\end{proposition}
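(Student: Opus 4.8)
The plan is to reduce the statement to a piece of linear algebra over $K$, exploiting the fact that a rational function of bounded degree is determined by finitely many of its values. If $\varphi$ is constant the claim is immediate, since then $\varphi$ equals any of its values, which lies in $D\subseteq K$; so assume $\varphi$ is nonconstant. Write $\varphi=f/g$ with $f,g\in L[x]$ and $g\neq 0$, and set $m=\deg f$, $n=\deg g$. I want to manufacture $p,q\in K[x]$ with $q\neq 0$ for which $\varphi=p/q$.

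First I would pick $m+n+2$ distinct elements $a_0,\dots,a_{m+n+1}$ of $E$ that are not roots of $g$; this is possible because $E$ is infinite while $g$ has only finitely many roots. Put $c_i:=\varphi(a_i)$, which lies in $D\subseteq K$ by hypothesis. Consider the homogeneous linear system in the unknown coefficients of a polynomial $p$ of degree $\le m$ and a polynomial $q$ of degree $\le n$:
\[
p(a_i)-c_i\,q(a_i)=0,\qquad i=0,1,\dots,m+n+1.
\]
Its coefficients all lie in $K$ (being built from the $a_i$ and the $c_i$), and it is square: there are $m+n+2$ equations in $(m+1)+(n+1)=m+n+2$ unknowns. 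Over $L$ the pair $(f,g)$ is a nonzero solution, because $f(a_i)=\varphi(a_i)g(a_i)=c_ig(a_i)$ (here we use $g(a_i)\neq 0$). Hence the coefficient matrix, whose entries lie in $K$, has determinant $0$ — the determinant is the same element of $K$ whether computed over $K$ or over $L$ — so the system already has a nonzero solution $(p,q)\in K[x]^2$ over $K$.

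It remains to verify that $(p,q)$ genuinely represents $\varphi$. If $q=0$ then $p$ vanishes at the $m+n+2$ distinct points $a_i$ while $\deg p\le m<m+n+2$, forcing $p=0$ and contradicting $(p,q)\neq(0,0)$; thus $q\neq 0$. Now set $h:=pg-qf\in L[x]$, which has degree at most $m+n$. For each $i$ we have $f(a_i)=c_ig(a_i)$ and $p(a_i)=c_iq(a_i)$, so $h(a_i)=g(a_i)\bigl(p(a_i)-c_iq(a_i)\bigr)=0$; hence $h$ vanishes at $m+n+2>m+n$ distinct points and therefore $h=0$. So $pg=qf$, i.e. $\varphi=f/g=p/q\in K(x)$, as claimed.

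The argument is entirely elementary; the only real care is bookkeeping — choosing enough points of $E$, and matching the number of equations to the number of unknowns so that the system is genuinely square — together with the standard observation that singularity of a square matrix with entries in $K$ is unaffected by passing to an extension field. I do not anticipate any serious obstacle beyond getting these counts exactly right.
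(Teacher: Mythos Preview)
Your argument is correct. The paper does not supply its own proof of this proposition; it merely cites \cite[Proposition X.1.4]{Cahen}, so there is no in-paper argument to compare against. For what it is worth, your linear-algebra reduction---setting up a square homogeneous system over $K$ whose singularity is witnessed by the $L$-solution $(f,g)$, then verifying that any nonzero $K$-solution $(p,q)$ actually represents $\varphi$---is essentially the classical proof one finds in the cited reference, and all the bookkeeping (the count $m+n+2$, the exclusion of roots of $g$, the degree bound on $h=pg-qf$) is handled cleanly.
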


The following is a stronger version of Lemma \ref{Lem:NoRatMapToSubfield}. When $L/M$ is a field extension that is not purely inseparable with $L$ being infinite, not only does there not exist a nonconstant rational function $\varphi \in M(x)$ such that $\varphi(d) \in M$ for all but finitely many $d \in L$, but there does not exist such a rational function in $L(x)$ either. 

\begin{lemma}
	Let $L/M$ be a field extension that is not purely inseparable of finite exponent. Additionally, suppose that $L$ is an infinite field. Then there does not exist a nonconstant rational function $\varphi \in L(x)$ such that $\varphi(d) \in M$ for all but finitely many $d \in L$. 
\end{lemma}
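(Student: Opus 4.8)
The plan is to show that any hypothetical counterexample $\varphi\in L(x)$ must in fact have all its coefficients in $M$, and then quote Lemma~\ref{Lem:NoRatMapToSubfield} to get a contradiction. The only tool needed beyond that lemma is Proposition~\ref{Prop:FieldExtensionUnnecessary}, which lets one descend the field over which a rational function is defined.

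First I would dispose of the case $M$ finite. Write $M=\{d_1,\dots,d_k\}$ and suppose $\varphi\in L(x)$ is nonconstant with $\varphi(d)\in M$ for all but finitely many $d\in L$. Then the rational function $(\varphi-d_1)\cdots(\varphi-d_k)\in L(x)$ vanishes at all but finitely many elements of the infinite field $L$, hence is the zero rational function; so $\varphi$ equals some $d_i$ and is constant, a contradiction. Thus we may assume $M$ is infinite. Now, since $M\subseteq L$, the set $E$ of elements of $M$ at which $\varphi$ is defined and satisfies $\varphi(d)\in M$ is cofinite in $M$, hence infinite. Regarding $M$ as a domain with field of fractions $M$ and $L$ as a field extension of $M$, Proposition~\ref{Prop:FieldExtensionUnnecessary} applied with $D=M$, $K=M$, and this $E$ yields $\varphi\in M(x)$.

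At this point $\varphi\in M(x)$ is a nonconstant rational function mapping all but finitely many $d\in L$ into $M$, while by hypothesis $L/M$ is not purely inseparable of finite exponent and $L$ is infinite. This contradicts Lemma~\ref{Lem:NoRatMapToSubfield}, so no such $\varphi$ exists. I do not expect any real obstacle here: the substantive work is already carried out in Lemma~\ref{Lem:NoPolyMapToSubfield}, Lemma~\ref{Lem:NoRatMapToSubfield}, and the descent of Proposition~\ref{Prop:FieldExtensionUnnecessary}. The only point that needs a moment's attention is that the descent proposition requires an infinite subset of $K=M$ and therefore does not apply when $M$ is finite, which is exactly why the short vanishing-product argument is handled first as a separate case.
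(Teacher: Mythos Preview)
Your proposal is correct and follows essentially the same approach as the paper: dispose of the finite-$M$ case via the vanishing-product argument, then in the infinite-$M$ case apply Proposition~\ref{Prop:FieldExtensionUnnecessary} with $D=K=M$ and $E$ a cofinite subset of $M$ to descend $\varphi$ to $M(x)$, and conclude by Lemma~\ref{Lem:NoRatMapToSubfield}. Your phrasing is in fact slightly more careful than the paper's in that you explicitly restrict $E$ to points where $\varphi$ is defined.
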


\begin{proof}
	Proceeding with proof by contradiction, let $\varphi \in L(x)$ be a nonconstant rational function such that $\varphi(x) \in M$ for all but finitely many $x \in L$. 
	
	First, we will handle the case when $M = \{d_1, \dots, d_k\}$ is finite. If $\varphi \in L(x)$ such that $\varphi(L) \subseteq M$, then $(\varphi-d_1) \cdots (\varphi-d_k)$ evaluates to 0 for all but finitely element of $L$, which is infinite. Therefore, $(\varphi-d_1) \cdots (\varphi-d_k) = 0$, forcing $\varphi$ to be constant.
	
	Now we assume that $M$ is infinite. Following the notation in Proposition \ref{Prop:FieldExtensionUnnecessary}, we let $D = M$ and $L = L$. Also let $E$ be the set of elements $d \in M$ such that $\varphi(d) \in M$. Note that $E$ is infinite. Then $\varphi(E) \subseteq M$, so $\varphi \in M(x)$. However, $\varphi \in M(x)$ is a rational function such that $\varphi(d) \in M$ for all but finitely many $d \in L$, a contradiction of Lemma \ref{Lem:NoRatMapToSubfield}.
\end{proof}

\bibliographystyle{amsalpha}
\bibliography{references}
\end{document}